   \renewcommand\@biblabel[1]{#1.}
\DeclareSymbolFont{usualmathcal}{OMS}{cmsy}{m}{n}
\DeclareSymbolFontAlphabet{\mathcal}{usualmathcal}
\DeclareMathAlphabet\BCal{OMS}{cmsy}{b}{n}
\numberwithin{equation}{section} 
\theoremstyle{plain} 
\newtheorem{prop}{Proposition}[section]
\newtheorem{thm}[prop]{Theorem}
\newtheorem{lemma}[prop]{Lemma} 
\newtheorem{cor}[prop]{Corollary}
\newtheorem*{conjecture*}{Conjecture}
\newtheorem*{thm*}{Theorem}
\newtheorem{question}[prop]{Question}
\newtheorem*{question*}{Question}
\theoremstyle{remark}
\newtheorem{rmk}[prop]{Remark}
\newtheorem{example}[prop]{Example}
\newtheorem*{comment*}{Comment}
\theoremstyle{definition}
\newtheorem{defn}[prop]{Definition}
\newcommand{\CC}{\mathbb{C}}
\newcommand{\lP}{\mathbb{P}}
\newcommand{\lR}{\mathbb{R}}
\newcommand{\ZZ}{\mathbb{Z}}
\newcommand{\cA}{\mathcal{A}}
\newcommand{\cO}{\mathcal{O}}
\newcommand{\cX}{\mathcal{X}}
\newcommand{\sF}{\mathcal{F}}
\newcommand{\sA}{\mathcal{A}}
\newcommand{\qA}{\mathsf{D}}
\newcommand{\eA}{\mathsf{eD}}
\newcommand*{\defeq}{\mathrel{\vcenter{\baselineskip0.5ex \lineskiplimit0pt
\hbox{\scriptsize.}\hbox{\scriptsize.}}}%
=}
\newcommand{\ssl}{\mathfrak{sl}}
\newcommand{\su}{\mathfrak{su}}
\newcommand{\Def}{\mathsf{Def}}
\newcommand{\Ext}{\operatorname{Ext}}
\newcommand{\End}{\operatorname{End}}
\newcommand{\oH}{\operatorname{H}}
\newcommand{\Hom}{\operatorname{Hom}}
\newcommand{\RHom}{\mathsf{RHom}}
\newcommand{\MC}{\operatorname{MC}}
\newcommand{\Pic}{\operatorname{Pic}}
\newcommand{\Art}{\mathsf{Art}}
\newcommand{\Sets}{\mathsf{Sets}}
\DeclareMathOperator{\Sp}{Sp}
\DeclareMathOperator{\SU}{SU}
\DeclareMathOperator{\id}{id}
\DeclareMathOperator{\rk}{rk}
\DeclareMathOperator{\Sym}{Sym}
\newcommand{\im}{\operatorname{Im}}
\newcommand{\debar}{\bar{\partial}}
\begin{document}

\title[Hyper-holomorphic connections on HK manifolds]{Hyper-holomorphic connections on vector bundles\\
on hyper-K\"ahler manifolds}

\date{April 8, 2022}

\author{Francesco Meazzini}
\address{\newline
Alma Mater studiorum Universit\`a di Bologna, \hfill\newline
Dipartimento di Matematica\hfill\newline
Piazza di porta San Donato 5, 40126 Bologna, Italy}
\email[F.~Meazzini]{francesco.meazzini2@unibo.it}

\author{Claudio Onorati}
\address{\newline
Universit\`a degli studi di Roma Tor Vergata, \hfill\newline
Dipartimento di Matematica\hfill\newline
Viale della ricerca scientifica 1, 00133, Roma, Italy}
\email[C.~Onorati]{onorati@mat.uniroma2.it}

\begin{abstract}
We study infinitesimal deformations of autodual and hyper-holomorphic connections on complex vector bundles on hyper-K\"ahler manifolds of arbitrary dimension. In particular, we describe the DG Lie algebra controlling this deformation problem.
Moreover, we prove associative formality for derived endomorphisms of a holomorphic vector bundle admitting a projectively hyper-holomorphic connection.
\end{abstract}

\subjclass[2010]{14D05; 14D15; 14E30; 14J40.}
\keywords{Formality, irreducible holomorphic symplectic manifolds, hyper-holomorphic connections}

\maketitle

\tableofcontents

\section*{Introduction}
The notion of formality for (either commutative, or associative, or Lie) differential graded algebras has been used in several settings in order to obtain deep geometric results.

The first big step dates back to the work by Deligne--Griffiths--Morgan--Sullivan~\cite{DGMS}, where they proved that the de Rham algebra $(A^{\ast}_{\mathbb{R}},\operatorname{d}_{\operatorname{dR}})$ of a compact K\"ahler manifold $X$ is formal, hence deducing that the real homotopy type of $X$, controlled by the homotopy class of $(A^{\ast}_{\mathbb{R}},\operatorname{d}_{\operatorname{dR}})$, is essentially determined by the
cohomology algebra $\oH^{\ast}(X, \mathbb{R})$.

Using the same tecniques of~\cite{DGMS}, Goldman and Millson~\cite{GM88} studied the moduli space of flat connections on a fixed complex vector bundle; this moduli space is hystorically a very interesting object being related to the character variety of representations of the fundamental group of the manifold and to the moduli space of Higgs bundles (which is an instance of the famous non-abelian Hodge theory). More precisely, if $\nabla$ is a flat connection on $E$, then $\nabla$ extends to a differential on the graded algebra $A^\ast(End(E))$ of complex valued $C^\infty$ forms with coefficients in the endomorphism bundle $End(E)$, whose formality implies that the moduli space of certain representations of the
fundamental group of a compact K\"ahler manifold admits at most quadratic singularities.

Another big step forward has been achieved by Kontsevich in~\cite{Kon}, where he proved that every finite dimensional Poisson manifold admits a canonical deformation quantization. This was obtained by showing the Lie formality of the Hochschild cohomology of the algebra $A$ of smooth functions on a differentiable manifold with coefficients in $A$.

More recently, the papers~\cite{BZ,BaMaMe21} dealt with the so called \emph{Kaledin-Lehn formality conjecture}, which mainly arose in the paper~\cite{KL07}. First it has been proved in~\cite{BZ} that the homotopy class of derived endomorphisms $\RHom(F,F)$ of a polystable coherent sheaf $F$ on a K3 surface is associatively formal, then in \cite{BaMaMe21} it has been shown that $\RHom(F,F)$ is Lie formal for every polystable coherent sheaf $F$ on a smooth minimal surface of Kodaira dimension $0$. In particular, both results imply that the moduli space of semistable sheaves on a K3 surface admits at most quadratic singularities. 

It is natural to ask if the same result holds for higher dimensional hyper-K\"ahler manifolds, but both the proofs mentioned above fail and the situation seems to become much more complicated. A milestone in such a higher dimensional setting is the paper~\cite{Verbitsky:JAG1996} where Verbitsky proved that the moduli space of stable vector bundles admitting a \emph{projectively hyper-holomorphic connection} on a hyper-K\"ahler manifold has at most quadratic singularities. Roughly speaking, for a vector bundle $E$ to have a projectively hyper-holomorphic connection means that the associated Azumaya algebra $End(E)$ deforms, as a holomorphic vector bundle, along the twistor family (cf.\ the definition of modular sheaf given in \cite{Markman:Modular}).

It is worth pointing out that on a K3 surface every stable holomorphic vector bundle admits such a connection, while on a higher dimensional hyper-K\"ahler manifold this turns out to be a particular restrictive assumption.

In the paper~\cite{O'Grady:Modular}, O'Grady introduced the notion of \emph{modular} sheaves, enlarging the class studied by Verbitsky to torsion free sheaves that are not necessarily locally free sheaves, and proving that for modular sheaves there exist a wall and chamber decomposition of the ample cone that behaves as in the classical K3 case.

In the literature one also finds few classes of sheaves on hyper-K\"ahler manifolds for which derived endomorphisms can be represented by a formal DG Lie algebra: the first class consists of some torsion sheaves supported on lagrangian submanifolds (see~\cite{Mladenov:Formality}), while the second consists of so-called atomic sheaves (see~\cite{Beckmann}).

One of the main focuses of the present paper is the study of the associative Dolbeault DG algebra
$\left(A^{0,\ast}(End(E)),\debar\right)$
of $C^{\infty}$-valued differential forms with coefficients in the endomorphism bundle $End(E)$ of a holomorphic vector bundle $E$ admitting a projectively hyper-holomorphic connection $\nabla$ such that $\nabla^{0,1}+\debar$. 
It is well known that $(A^{0,\ast}(End(E)),\debar)$ represents the homotopy class $\RHom(E,E)$ (see e.g.\ \cite{FIM}). We prove the following result.

\begin{thm*}[Theorem~\ref{thm:End are DGMS}]
Let $E$ be a holomorphic vector bundle on a hyper-K\"ahler manifold and assume that $E$ admits a projectively hyper-holomorphic connection. Then the Dolbeault DG algebra
\[ \left(A^{0,\ast}(End(E)),\debar\right) \]
is associatively formal.
\end{thm*}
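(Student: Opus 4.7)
The plan is to adapt the Deligne--Griffiths--Morgan--Sullivan (DGMS) principle of two differentials to the hyper-K\"ahler bundle setting, using Verbitsky's Hodge theory for hyper-holomorphic bundles as the crucial analytic input. The main obstacle is not setting up DGMS on the full bicomplex, but descending a formality statement to the Dolbeault sub-DG algebra.

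\smallskip
\textbf{Step 1 (Algebraic setup).} I would first observe that the induced connection on $End(E)$ is flat. Indeed, projective hyper-holomorphicity of $\nabla$ forces the curvature $F_\nabla$ on $E$ to be a scalar $(1,1)$-form $\omega\cdot\operatorname{id}_E$, whose adjoint action on $End(E)$ is zero. Hence $(A^{\ast}(End(E)),\nabla)$ is a DG algebra, and decomposing $\nabla=\partial+\debar$ with respect to the fixed complex structure $I$ gives a bicomplex with anticommuting differentials satisfying $\partial^{2}=\debar^{2}=\{\partial,\debar\}=0$.

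\smallskip
\textbf{Step 2 (Hyper-K\"ahler Hodge theory).} Since $End(E)$ inherits a hyper-holomorphic structure, Verbitsky's extended K\"ahler identities and Hodge decomposition yield a $\partial\debar$-lemma for the bicomplex $(A^{\ast,\ast}(End(E)),\partial,\debar)$: any $\partial$-closed and $\debar$-closed form lying in $\operatorname{Im}\partial+\operatorname{Im}\debar$ is already $\partial\debar$-exact.

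\smallskip
\textbf{Step 3 (DGMS for the total bicomplex).} Applying the DGMS principle of two differentials to $(A^{\ast,\ast}(End(E)),\partial,\debar)$ produces a zigzag of DG algebra quasi-isomorphisms
\[
(A^{\ast,\ast}(End(E)),\debar)\;\longleftarrow\;(\ker\partial,\debar)\;\longrightarrow\;\bigl(\oH^{\ast,\ast}_{\partial}(End(E)),0\bigr),
\]
establishing associative formality of the total complex.

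\smallskip
\textbf{Step 4 (Descent to the Dolbeault sub-DGA).} This is the decisive and most delicate step. The inclusion $(A^{0,\ast},\debar)\hookrightarrow(A^{\ast,\ast},\debar)$ is not a quasi-isomorphism, so formality of the target does not formally imply formality of the source. The plan is to exploit the $\mathfrak{su}(2)$-action coming from the hyper-K\"ahler structure, which extends to $A^{\ast}(End(E))$ precisely because $End(E)$ is hyper-holomorphic. Two possible routes are: (i) construct a genuine second differential $D$ directly on $A^{0,\ast}(End(E))$, built from $\partial$ conjugated by the $SU(2)$-action (equivalently, via contraction with the holomorphic symplectic form $\sigma\in A^{2,0}_{I}$), so that $D^{2}=0$, $\{D,\debar\}=0$ and a genuine $D\debar$-lemma holds, and then re-run DGMS on the Dolbeault complex itself; or (ii) show that the zigzag in Step 3 can be chosen $\mathfrak{su}(2)$-equivariantly and then project onto the appropriate weight component to recover a zigzag of DGA quasi-isomorphisms
\[
(A^{0,\ast}(End(E)),\debar)\;\longleftarrow\;\cdots\;\longrightarrow\;\Bigl(\bigoplus_{q}\operatorname{Ext}^{q}(E,E),0\Bigr).
\]

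\smallskip
\textbf{Main obstacle.} Route (i) and route (ii) both hinge on the same technical heart: the natural auxiliary differential $\partial$ does not stabilize $A^{0,\ast}$, and one must trade it for an operator built out of the second complex structure $J$ using the $SU(2)$-symmetry, then verify both the anticommutation with $\debar$ and the accompanying $D\debar$-lemma. This is precisely the point at which the hypothesis of a \emph{projectively hyper-holomorphic} connection (as opposed to merely a holomorphic one) is essential, and I expect it to be the step demanding the bulk of the work.
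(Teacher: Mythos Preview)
Your Step~1 contains a genuine error: projectively hyper-holomorphic does \emph{not} force the curvature $F_\nabla$ on $E$ to be a scalar $(1,1)$-form. By definition it only means that the induced connection $[\nabla,-]$ on $End(E)$ has $\SU(2)$-invariant curvature $\ad(F_\nabla)$, which is far weaker than $\ad(F_\nabla)=0$; for example every stable bundle on a K3 surface is projectively hyper-holomorphic, yet almost none have scalar curvature. Consequently, on the full bigraded space $A^{\ast,\ast}(End(E))$ one does have $\partial^2=\debar^2=0$ (the curvature being of type $(1,1)$ for $I$), but $\{\partial,\debar\}=\ad(F_\nabla)^{1,1}\neq 0$ in general. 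The bicomplex of Step~1 therefore does not exist, and Steps~2--3 collapse with it.

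The good news is that your route (i) in Step~4 is precisely how the paper proceeds, and it bypasses the problem entirely. One defines $\nabla^{0,1}_J=(J^{-1}\otimes\id)\circ\nabla^{1,0}\circ(J\otimes\id)$ directly on $A^{0,\ast}(End(E))$. The crucial observation is that $\{\nabla^{0,1},\nabla^{0,1}_J\}=0$ \emph{does} hold on $A^{0,\ast}$, even though $\{\partial,\debar\}\neq 0$ on $A^{\ast,\ast}$: this is exactly where the $\SU(2)$-invariance of the curvature enters, since the induced operator $\nabla_+$ on the quaternionic quotient $A^{\ast}_+(End(E))$ is flat (an $\SU(2)$-invariant $2$-form has weight $0$ and dies in the quotient), and via the isomorphism with $\qA^{\ast}(End(E))$ this flatness unpacks into the three relations $(\nabla^{0,1})^2=(\nabla^{0,1}_J)^2=\{\nabla^{0,1},\nabla^{0,1}_J\}=0$. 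The required strong $\nabla^{0,1}\nabla^{0,1}_J$-lemma on $A^{0,\ast}(End(E))$ is a theorem of Verbitsky, proved via Hodge--Nakano--Kodaira type identities using the hermitian metric (this is where \emph{hyper-holomorphic}, rather than merely autodual, is needed). After checking that both differentials are derivations for the associative product, a direct DGMS argument on $\left(A^{0,\ast}(End(E)),\nabla^{0,1},\nabla^{0,1}_J\right)$ yields the formality; no descent from a larger bicomplex is required, and your Steps~2--3 can simply be discarded.
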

The same result has been proved, with a different method, in \cite[Theorem~6.1]{Beckmann} for powers of slope stable vector bundles admitting a projectively hyper-holomorphic connection. Our approach is completely different and highlights the hyper-K\"ahler structure.

When the holomorphic vector bundle bundle admits a flat connection, which is an example of a hyper-holomorphic connection, the same result is proved in \cite[Section~9.2]{GM88}.
As a corollary we recover Verbitsky's quadraticity result. Our proof is easy, as it relies on the same ideas already pointed out in~\cite{DGMS,GM88}, where formality is obtained by either combining the de Rham differential $\operatorname{d}_{\operatorname{dR}}$ with its adjoint $\operatorname{d}_{\operatorname{dR}}^c$, or by showing that $\nabla$ and its $(0,1)$-part $\nabla^{0,1}$ satisfy similar properties whenever $\nabla$ is a flat connection.
Following this idea, in this paper we consider \emph{autodual connections}, of which hyper-holomorphic connections are the metric case, which can be thought of as a generalisation of flat connections since they ``become flat" after taking an appropriate quaternionic quotient (cf. Definition~\ref{def.qD}). Here is where the hyper-K\"ahler structure comes into play. 
Indeed this additional structure induces an action of the Lie group $\SU(2)$ on the real tangent space of the manifold, and hence on all of its tensor products. Using this $\SU(2)$-action one can define a special class of Yang--Mills connections that have been called \emph{autodual} by Kaledin--Verbitsky in~\cite{Kaledin.Verbitsky98}. Moreover, using Verbitsky's terminology~\cite{Verbitsky:JAG1996}, an \emph{hyper-holomorphic} connection can be defined as an autodual connection arising as the Chern connection of an hermitian metric.

Following the analogy with flat connections, we look for the DG Lie algebra controlling infinitesimal deformations of an autodual connection. As already recalled, if a connection $\nabla$ is flat over $E$, then its deformations are controlled by the DG Lie algebra $(A^\ast(End(E)),[\nabla,-])$. Since autodual connections are analogous to flat connections on the quaternionic quotient, our first guess is to use such complex. More precisely, Verbitsky in \cite{Verbitsky:Compositio2007} describes the associative \emph{quaternionic Dolbeault DG algebra} \[(\qA^\ast(End(E)),[\nabla_+,-]) \; , \]
which is the quotient of the de Rham DG algebra $A^\ast(End(E))$ with coefficients in $End(E)$ by an ideal defined using the $SU(2)$-action (cf. Definition~\ref{def.qD}). Here the assumption on $\nabla$ to be autodual ensures that the induced operator $[\nabla_+,-]$ on $\qA^{\ast}(End(E))$ squares to zero, hence providing a differential on $\qA^\ast(End(E))$.
We prove the following result.

\begin{thm*}[Theorem~\ref{thm:iso of def}]
The infinitesimal autodual deformations of an autodual connection $\nabla$ on a vector bundle $E$ are controlled by the quaternionic Dolbeault DG Lie algebra 
\[ \left(\qA^{\ast}(End(E)),[\nabla_+,-],[-,-]\right).\] 
\end{thm*}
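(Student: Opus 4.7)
My plan is to follow the Goldman--Millson strategy, adapted to autoduality via the quaternionic Dolbeault reduction. The point is that the quotient $A^{\ast}(\End(E))\twoheadrightarrow\qA^{\ast}(\End(E))$ from Definition~\ref{def.qD} is tailored so that autoduality of a connection is detected precisely by the vanishing of the image of its curvature in $\qA^{\ast}$. The goal is to exhibit, for every Artin local $\CC$-algebra $A$ with maximal ideal $\mathfrak{m}_A$, a natural bijection between the set of autodual deformations of $\nabla$ over $A$ modulo gauge equivalence and $\MC_L(A)$ modulo gauge action, where $L=(\qA^{\ast}(\End(E)),[\nabla_+,-],[-,-])$.

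The central computation is the following. Given $\alpha\in A^1(\End(E))\otimes\mathfrak{m}_A$, the curvature of $\nabla+\alpha$ equals $F_\nabla+[\nabla,\alpha]+\tfrac{1}{2}[\alpha,\alpha]$. Since $\nabla$ is autodual, $F_\nabla$ projects to zero in $\qA^{2}(\End(E))$, and the operator $[\nabla,-]$ descends to $[\nabla_+,-]$ on the quotient. Hence the projection of $F_{\nabla+\alpha}$ to $\qA^{2}(\End(E))\otimes\mathfrak{m}_A$ equals
\[
[\nabla_+,\bar\alpha]+\tfrac{1}{2}[\bar\alpha,\bar\alpha],
\]
where $\bar\alpha$ denotes the image of $\alpha$. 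The vanishing of this expression, which by construction of $\qA^{\ast}$ is equivalent to the autoduality of $F_{\nabla+\alpha}$, is precisely the Maurer--Cartan equation in $L$, and it depends only on $\bar\alpha$, not on the chosen lift. In parallel, the gauge action $e^{\lambda}(\nabla+\alpha)e^{-\lambda}$ with $\lambda\in A^0(\End(E))\otimes\mathfrak{m}_A$ descends to the usual exponential action on Maurer--Cartan elements, because the identity $\qA^0(\End(E))=A^0(\End(E))$ matches the gauge groups on both sides.

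This yields a natural transformation $\Def_L\to\Def^{\mathrm{aut}}_\nabla$ between deformation functors. Surjectivity is essentially formal: the projection $A^1(\End(E))\twoheadrightarrow\qA^1(\End(E))$ admits set-theoretic lifts, and any lift of a Maurer--Cartan class $\bar\alpha$ automatically gives an autodual deformation by the computation above. The main obstacle is injectivity, i.e.\ the statement that two different lifts $\alpha,\alpha'$ of the same Maurer--Cartan class produce gauge-equivalent autodual deformations in $A$. This step requires a careful analysis of $\ker(A^1\to\qA^1)$ via the $\SU(2)$-weight decomposition, and amounts to producing, order by order in $\mathfrak{m}_A$, a gauge transformation $e^\lambda$ interpolating between $\nabla+\alpha$ and $\nabla+\alpha'$; the key input is that both connections share the same curvature class modulo the ideal defining $\qA^{\ast}$, so the difference $\alpha'-\alpha$ lives in the subspace on which the autoduality constraint is vacuous and on which infinitesimal gauge transformations act transitively. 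Once this is secured, the standard principle that a deformation functor is controlled by the DG Lie algebra of its Maurer--Cartan equation completes the proof.
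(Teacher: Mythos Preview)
Your approach is essentially the paper's: both arguments rest on the fact that autoduality of a connection is equivalent, by Lemma~\ref{lemma:autodual are flat}, to the vanishing of its curvature in the quotient $A^{\ast}(End(E))\to A_+^{\ast}(End(E))\cong\qA^{\ast}(End(E))$, so that the Maurer--Cartan equation in $\qA^{\ast}$ becomes exactly the autoduality condition on $\nabla+\alpha$, and both note $\qA^0=A^0$ to match gauge groups. The paper phrases this through the explicit $\varphi$ of Lemma~\ref{lemma:Phi} and the $(x,y)$-bigrading, while you phrase it via the quotient map; these are the same argument.

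Where you go astray is in identifying the ``main obstacle''. The kernel $\mathfrak{I}^1=\ker\bigl(A^1(End(E))\to\qA^1(End(E))\bigr)$ is \emph{zero}: by definition $\mathfrak{I}^1$ consists of the weight-$0$ (i.e.\ $\SU(2)$-invariant) part of $A^1$, and by Proposition~\ref{prop:action of SU}(1) an $\SU(2)$-invariant form must have even degree, so there are no invariant $1$-forms. Equivalently, the map $\varphi^1\colon V\otimes A^{0,1}\to A^1_+$ in Lemma~\ref{lemma:Phi} is an isomorphism onto $A^1$ itself. Hence the lift of $\bar\alpha\in\qA^1$ to $A^1$ is unique, and your proposed order-by-order gauge interpolation is unnecessary. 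This is precisely what the paper exploits when it writes the deformed connection as $\nabla+\xi_2\otimes b_2+(J\otimes\id)\circ(\xi_1\otimes b_1)\circ(J^{-1}\otimes\id)$ without any ambiguity. Your suggested fix, that infinitesimal gauge transformations act transitively on $\ker(A^1\to\qA^1)$, is vacuously true here; but note that had the kernel been nonzero, that transitivity claim would not follow from the curvature information you cite and would require a separate argument. Once you record $\mathfrak{I}^1=0$, the bijection between $\MC_L(B)$ and autodual $B$-deformations is immediate, and injectivity at the level of $\Def$ follows directly from the identification of gauge groups.
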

What we mean by an \emph{autodual deformation} of an autodual connection is made precise in Section $\S~\ref{section:moduli of connections}$, where we carefully define the corresponding deformation functor (cf.\  Definition~\ref{def.deformationconnection}).

The main properties of the quaternionic Dolbeault complex are studied in $\S$~\ref{section:def of hyper-hol connections}. In Question~\ref{question:qD formal} we ask whether this DG Lie algebra is formal. When the corresponding holomorphic vector bundle has a homotopy abelian DG Lie algebra, then we remark in Proposition~\ref{prop:qD hom abelian} that this is the case for the quaternionic Dolbeault as well, hence being indeed formal. Nevertheless, the general behaviour is not known and we remark in Corollary~\ref{cor:qD not formal} that the general machinery due to \cite{DGMS} does not apply in this case, so the formality problem is more subtle.

Finally, given an autodual connection $\nabla$ on $E$, its $(0,1)$-part $\nabla^{0,1}$ defines a holomorphic structure on $E$. It is natural to wonder how the holomorphic deformations of $(E,\nabla^{0,1})$ are connected to the autodual deformations of $(E,\nabla)$. Our answer to this question can be resumed as follows.

\begin{thm*}[Corollary~\ref{cor.autodualwithfixedhol}] 
Let $E$ be a complex vector bundle on a hyper-K\"ahler manifold. Suppose $E$ is endowed with a holomorphic structure $\debar$ and a hyper-holomorphic connection $\nabla=\nabla^{1,0}+\debar$. Then
\begin{enumerate}
    \item for every holomorphic deformation $\,(E,\debar')$ of $\,(E,\debar)$ there exists an autodual connection $\nabla'$ whose $(0,1)$-part is $\debar'$,
    \item there exists a $1\colon 1$ correspondence
    \[ \left\{ \mbox{first order deformations of $\debar$} \right\}\leftrightarrow \left\{ \parbox{15em}{first order autodual deformations\\ $\mbox{ }\quad$ of $\,\nabla\,$ whose $\,(0,1)$-part is $\debar$}\right\} \]
\end{enumerate}
\end{thm*}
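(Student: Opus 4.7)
The plan is to derive this Corollary from Theorem~\ref{thm:iso of def}, which identifies the DG Lie algebra controlling autodual deformations of $\nabla$ with $L=(\qA^{\ast}(End(E)),[\nabla_+,-],[-,-])$, combined with the fact that holomorphic deformations of $\debar$ are controlled by the Dolbeault DG Lie algebra $M=(A^{0,\ast}(End(E)),\debar,[-,-])$. The forgetful map sending an autodual connection $\nabla'$ to its $(0,1)$-part $(\nabla')^{0,1}$ should be encoded by a morphism of DG Lie algebras $p: L \to M$, built from the $SU(2)$-equivariant decomposition of the de Rham complex that underlies the definition of $\qA^{\ast}$. The first task is to check that $p$ is well-defined and that it intertwines $[\nabla_+,-]$ with $[\debar,-]$; this reduces to the compatibility between $\debar=\nabla^{0,1}$ and the max-weight component $\nabla_+$, which holds precisely because $\nabla$ is hyper-holomorphic.

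For part~(2), I would analyse $H^1(p): H^1(L) \to H^1(M)=H^{0,1}(End(E))$ and show it is an isomorphism. Since $\nabla$ is hyper-holomorphic, $End(E)$ itself is hyper-holomorphic and the full cohomology $\oH^{\ast}(End(E))$ is an $SU(2)$-module. Decomposing $H^{\ast}(L)$ into $SU(2)$-isotypical components, the degree-one max-weight piece should exactly match $H^{0,1}(End(E))$ under $p$, producing the claimed bijection on first-order deformations.

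Part~(1) reduces to lifting every holomorphic deformation $\debar'=\debar+\alpha$ to an autodual connection $\nabla'$ with $(\nabla')^{0,1}=\debar'$, i.e.\ producing from a Maurer--Cartan element $\alpha\in A^{0,1}(End(E))$ of $M$ a Maurer--Cartan element $\tilde\alpha\in\qA^1(End(E))$ of $L$ whose $(0,1)$-part is $\alpha$. The hyper-holomorphic structure on $\nabla$ together with the $SU(2)$-action on $\qA^{\ast}$ supplies a canonical candidate lift, and one then verifies that the curvature of $\nabla+\tilde\alpha$ remains in the max-weight quaternionic component. The main obstacle is controlling this MC equation to all orders; here the formality result Theorem~\ref{thm:End are DGMS} is expected to be crucial, as it transports the obstruction analysis to cohomology, where the $SU(2)$-decomposition makes the existence of the required lift transparent.
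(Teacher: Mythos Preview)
Your overall framework---using Theorem~\ref{thm:iso of def} to pass to the quaternionic Dolbeault DG Lie algebra $L=\qA^{\ast}(End(E))$ and comparing with the Dolbeault DG Lie algebra $M=A^{0,\ast}(End(E))$ via the projection $p=\hat\pi_y$---is exactly the one the paper uses. However, your treatment of part~(2) contains a genuine error.

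You propose to prove that $H^1(p)\colon H^1(L)\to H^1(M)$ is an isomorphism. This is false: by Lemma~\ref{lemma:cohom of qD} one has $H^1(L)\cong \CC[x,y]_1\otimes H^{0,1}(End(E))$, which is \emph{two} copies of $H^1(M)$, not one. The statement of part~(2) is not that first-order autodual deformations correspond bijectively to first-order holomorphic deformations; it is that first-order autodual deformations \emph{whose $(0,1)$-part is the undeformed $\debar$}---that is, those lying in $\ker(T^1\pi_y)$---are in bijection with first-order holomorphic deformations. What you must show is therefore that $\ker(H^1(\pi_y))\cong H^1(M)$. The paper obtains this from Proposition~\ref{prop.autodualVSdefor}: the map $\pi_y\times\pi_x$ induces an isomorphism of tangent spaces $H^1(L)\cong H^1(M)\times H^1_{[\nabla_J^{0,1},-]}(A^{0,\ast}(End(E)))$, so $\ker(H^1(\pi_y))$ is the second factor; and this second factor is isomorphic to $H^1(M)$ because $(A^{0,\ast}(End(E)),[\nabla^{0,1},-])$ and $(A^{0,\ast}(End(E)),[\nabla_J^{0,1},-])$ are quasi-isomorphic (Remark~\ref{rmk:same cohomology}). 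Your $\SU(2)$-isotypical argument cannot salvage this: $H^1(L)\cong V\otimes H^{0,1}(End(E))$ is a single copy of the two-dimensional irreducible, so there is no ``max-weight piece'' singling out one summand.

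For part~(1) your outline is on the right track but more circuitous than necessary. You do not need the full formality theorem to lift Maurer--Cartan elements along $\pi_y$: the paper's Proposition~\ref{prop:surjectivity} gives surjectivity of $\pi_y$ on deformation functors directly, by observing that the inclusion $(\ker[\nabla_J^{0,1},-],[\nabla^{0,1},-])\hookrightarrow M$ is a quasi-isomorphism (this is the $\iota$-step in Theorem~\ref{thm.DGMS}), and that any Maurer--Cartan element $b_B$ lying in $\ker[\nabla_J^{0,1},-]$ lifts tautologically to $y\,b_B\in L$. No obstruction analysis or $\SU(2)$-decomposition in higher degrees is needed.
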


We conclude by making the following remark. Moduli spaces of autodual connections has been constructed analytically and studied in \cite{Kaledin.Verbitsky98}. In particular, in \cite[Section~2.5]{Kaledin.Verbitsky98} the authors construct a symplectic form on the holomorphic tangent space of such a moduli space. More generally they achieve these results for Yang--Mills connections on compact K\"ahler manifolds.
In \cite[Conjecture~8.1]{Kaledin.Verbitsky98}, the authors foresee that this symplectic form should come from an hyper-K\"ahler structure.
From our point of view, an evidence for this conjecture seems to be given by Proposition~\ref{prop.autodualVSdefor} and \cite[Section~9]{Verbitsky:JAG1996}. More precisely, in loc.\ cit.\ Verbitsky defines a (singular) hyper-K\"ahler structure on the tangent space (in the Kuranishi family) of a holomorphic vector bundle admitting a hyper-holomorphic connection and our Proposition~\ref{prop.autodualVSdefor} states that the tangent space of the corresponding hyper-holomorphic connection is isomorphic to two copies of the said tangent space. In particular it has a hyper-K\"ahler structure as well. 

\subsection*{Structure of the paper}

In Section $\S$~\ref{section:associative algebras} we provide an abstract presentation of the methods developed in~\cite{DGMS}. More precisely, we introduce a class of associative DG algebras, endowed with an extra structure miming the main ingredients used by Deligne--Griffiths--Morgan--Sullivan. For this reason we call them \emph{DGMS algebras}, see Definition~\ref{def.DGMSalgebra}. For the sake of completness we state and prove an abstract formality result for DGMS algebras (cf. Theorem~\ref{thm.DGMS}). Despite the fact that the proofs are essentially the same of~\cite{DGMS}, we found it useful to have an abstract account of the properties of this class of formal associative DG algebras, and we shall use them in different situations. We also briefly discuss the analog constructions in the Lie case.

In Section $\S$~\ref{section.claudio} we recall basic definitions and results from the theory of irreducible holomorphic symplectic manifolds and autodual connections, while Section $\S$~\ref{section:def of hyper-hol connections} is devoted to a characterisation of autodual connections in terms of Maurer--Cartan equations, cf.~Proposition~\ref{prop:hyper-hol as MC}. Here we also recall the notion of \emph{quaterionic Dolbeault DG algebra} (cf. Definition~\ref{def.qD}), and we enlight its main properties (cf. Lemma~\ref{lemma:cohom of qD}, and Theorem~\ref{thm:formality of qD}), which will play an important role later on in the paper.

In Section $\S$~\ref{section:moduli spaces holomorphic} we recall the classical theory of infinitesimal deformations of holomorphic vector bundles via DG Lie algebras and prove our formality statement, cf. Theorem~\ref{thm:End are DGMS}. We also give some examples and contextualise our result with the existing literature.

Finally, in Section $\S$~\ref{section:moduli of connections} we study local properties of the moduli space of autodual connections on a fixed complex vector bundle. More precisely, we define the deformation functor associated to an autodual connection and prove that it is naturally isomorphic to the deformation functor associated to the quaternionic Dolbeault DG Lie algebra via Maurer--Cartan equation modulo gauge action, cf. Theorem~\ref{thm:iso of def}.


\subsection*{Acknowledgements}
This work grew up from our attempt to understand Verbitsky's work on hyper-holomorphic connections, in particular his quadraticity result. We would like to thank him for his seminal papers on the subject, to which the present paper owes a lot.

We are grateful to Simone Diverio, Marco Manetti and Kieran O'Grady for useful conversations on the subject of this paper. 

Both authors acknowledge support from Ateneo 2017 \emph{Variet\`a speciali, spazi di moduli e teoria delle deformazioni}, that made possible the conference ``hyper-K\"ahler varieties in Rome'' organised in September 2021.
The second author is also supported by the PRIN grant with CUP E84I19000500006.

\section{DGMS algebras}\label{section:associative algebras}

In this first section we work over a field $\mathbb{K}$ of characteristic $0$. The following algebraic data are inspired by the paper \cite{DGMS}, so that the acronym DGMS in Definition~\ref{def.DGMS} stands for Deligne--Griffiths--Morgan--Sullivan.

\begin{defn}\label{def.DGMS}
A DGMS vector space $(A,d_0,d_1)$ is the datum of a graded $\mathbb{K}$-vector space $A=\bigoplus_{i\in\mathbb{Z}}A^i$ endowed with two $\mathbb{K}$-linear morphisms $d_0,d_1\in\Hom^1(A,A)$ of degree 1 satisfying the following properties:
\begin{enumerate}
\item $d_0^2=d_1^2=0$,
\item $[d_0,d_1]=d_0d_1+d_1d_0=0$,
\item the \emph{strong} $d_0d_1$\emph{-lemma} holds $\ker(d_0)\cap\ker(d_1)\cap\left(\im(d_0)+\im(d_1)\right)=\im(d_0d_1)$.
\end{enumerate}
\end{defn}

In literature (for example in \cite[Section~7]{GM88}), the strong $d_0d_1$-lemma is often referred to as "principle of the two types", since in all applications we always think at $d_0$ and $d_1$ as the $(1,0)$ and the $(0,1)$ part of a degree $1$ differential $d$ with respect to some additional structure.

\begin{lemma}\label{lemma.DGMS}
Let $A=\bigoplus\limits_{j\in\mathbb{Z}}A^j$ be a graded vector space endowed with two differentials $d_0$ and $d_1$. Under the hypothesis $[d_0,d_1]=0$, the following conditions are equivalent.
\begin{description}
\item[a] Given an element $x\in A$ which is both $d_0$-exact and $d_1$-closed, then there exists $y\in A$ such that $x=d_0d_1y$.
\item[b] $\ker(d_1)\cap \im(d_0)= \im(d_0d_1)$.
\item[c] The subcomplex $(\im(d_0), d_1)$ has trivial cohomology.
\end{description}
Similarly, the following conditions are equivalent.
\begin{description}
\item[a*] Given an element $x\in A$ which is both $d_0$-closed and $d_1$-exact, then there exists $y\in A$ such that $x=d_0d_1y$.
\item[b*] $\ker(d_0)\cap \im(d_1)= \im(d_0d_1)$.
\item[c*] The subcomplex $(\im(d_1), d_0)$ has trivial cohomology.
\end{description}
\begin{proof}
Straightforward.
\end{proof}
\end{lemma}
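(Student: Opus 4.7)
The plan is to observe that each triple of conditions is just a package of reformulations of the same statement, and to prove the equivalences by a short chain of implications. I will treat the first triple in detail; the second is formally identical, obtained by swapping the roles of $d_0$ and $d_1$ (and using that $[d_0,d_1]=0$ is symmetric in $d_0,d_1$), so it needs no separate argument.

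The first thing to check is that the subcomplex in (c) is well-defined. Because $[d_0,d_1]=0$, for any $w\in A$ one has $d_1(d_0 w)=-d_0(d_1 w)\in\im(d_0)$, so $d_1$ preserves $\im(d_0)$. Together with $d_1^2=0$, this gives a genuine cochain complex $(\im(d_0),d_1)$. Moreover the same identity yields $d_1(\im(d_0))=\im(d_0 d_1)$ as subspaces of $A$ (the sign is irrelevant). Consequently the cohomology of $(\im(d_0),d_1)$ is
\[ \frac{\ker(d_1)\cap\im(d_0)}{\im(d_0 d_1)}, \]
and condition (c) (triviality of this cohomology) is literally the same as condition (b).

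Next I would establish (a) $\Leftrightarrow$ (b). One inclusion in (b), namely $\im(d_0 d_1)\subseteq\ker(d_1)\cap\im(d_0)$, is automatic: any element of the form $d_0 d_1 y$ lies in $\im(d_0)$, and applying $d_1$ to it gives $d_1 d_0 d_1 y=-d_0 d_1^2 y=0$. Hence (b) reduces to the reverse inclusion $\ker(d_1)\cap\im(d_0)\subseteq\im(d_0 d_1)$, which is exactly the content of (a): if $x$ is both $d_0$-exact and $d_1$-closed, then $x\in\ker(d_1)\cap\im(d_0)$, and (a) asserts that $x\in\im(d_0 d_1)$.

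I expect no real obstacle: the whole argument is a direct unwinding of the definitions once one uses $[d_0,d_1]=0$ and $d_1^2=0$ to identify $d_1(\im(d_0))$ with $\im(d_0 d_1)$. The mild point to be careful about is the sign coming from the graded commutator $[d_0,d_1]=d_0 d_1+d_1 d_0=0$; but since we only care about the image of $d_0 d_1$, which coincides with the image of $-d_0 d_1=d_1 d_0$, the sign does not affect any of the inclusions. Finally, for the starred statements one reruns the same three-step argument with $d_0$ and $d_1$ interchanged, which is legitimate because conditions (1) and (2) in Definition~\ref{def.DGMS} are symmetric in $d_0$ and $d_1$.
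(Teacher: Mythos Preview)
Your argument is correct and is exactly the straightforward unwinding the paper has in mind: the identification $d_1(\im(d_0))=\im(d_0d_1)$ via $[d_0,d_1]=0$ immediately gives (b)$\Leftrightarrow$(c), and (a) is just the nontrivial inclusion in (b). Since the paper's own proof is simply ``Straightforward'', your proposal is a faithful fleshing-out of the intended reasoning.
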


In the following we shall denote by $\oH_{d_1}(A)=\frac{\ker(d_1)}{\im(d_1)}$ the cohomology of $A$ with respect to $d_1$, and similarly by $\oH_{d_0}(A)=\frac{\ker(d_0)}{\im(d_0)}$ the cohomology of $A$ with respect to $d_0$.

\begin{lemma}\label{lemma.DGMScohomology}
Let $A$ be a graded vector space endowed with two differentials $d_0$ and $d_1$ such that $[d_0,d_1]=0$.
\begin{itemize}
\item If $\,\ker(d_1)\cap \im(d_0)= \im(d_0d_1)$ then the differential induced by $d_0$ on $\oH_{d_1}(L)$ is trivial.
\item If $\,\ker(d_0)\cap \im(d_1)= \im(d_0d_1)$ then the differential induced by $d_1$ on $\oH_{d_0}(L)$ is trivial.
\end{itemize}
\begin{proof}
Consider a $d_1$-closed element $x\in A$; notice that $d_0x\in \ker(d_1)\cap\im(d_0)$, being $[d_0,d_1]=0$. By hypothesis there exists $y\in A$ such that $d_0x=d_0d_1y=-d_1d_0y$, so that $d_0x\in \im(d_1)$ as required.
The second item can be checked similarly.
\end{proof}
\end{lemma}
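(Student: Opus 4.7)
The plan is to verify the first item by a direct element chase, exploiting the fact that $[d_0,d_1]=0$ sends $d_1$-cocycles to $d_1$-cocycles under the action of $d_0$; the second item will follow immediately by exchanging the roles of $d_0$ and $d_1$, since both the hypothesis and the conclusion are symmetric under this swap.

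Concretely, I fix a representative $x$ of an arbitrary class in $\oH_{d_1}(A)$, so that $d_1 x = 0$. The induced operator sends $[x]$ to $[d_0 x]$, so I need to show that $d_0 x$ is $d_1$-exact. First I observe that $d_0 x \in \ker(d_1)\cap\im(d_0)$: the second condition is clear, while for the first the commutation $d_1 d_0=-d_0 d_1$ together with $d_1 x=0$ gives $d_1(d_0 x)=-d_0 d_1 x=0$. The hypothesis $\ker(d_1)\cap\im(d_0)=\im(d_0 d_1)$ then produces $y\in A$ with $d_0 x=d_0 d_1 y$, and applying the commutation once more yields $d_0 x=-d_1(d_0 y)$, exhibiting $d_0 x$ as $d_1$-exact.

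The only subtlety worth recording is that the operator induced by $d_0$ on $\oH_{d_1}(A)$ is well-defined in the first place, but this is a routine consequence of $[d_0,d_1]=0$: $d_0$ preserves both $d_1$-cocycles and $d_1$-coboundaries. I do not anticipate any real obstacle, since the hypothesis is tailor-made to provide the factorisation $d_0 x=d_0 d_1 y$, and the commutation then converts it into the desired $d_1$-exactness. The symmetric statement for $\oH_{d_0}(A)$ is obtained by the same argument with $d_0$ and $d_1$ interchanged.
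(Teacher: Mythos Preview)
Your argument is correct and essentially identical to the paper's own proof: you take a $d_1$-closed $x$, observe $d_0x\in\ker(d_1)\cap\im(d_0)$ via the commutation relation, invoke the hypothesis to write $d_0x=d_0d_1y=-d_1d_0y$, and conclude by symmetry for the second item. The only addition is your remark on well-definedness of the induced differential, which the paper leaves implicit.
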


Notice that the standard $d_0d_1$-lemma $\ker(d_0)\cap\im(d_1)=\im(d_0d_1)$ and the analogous $d_1d_0$-lemma $\ker(d_1)\cap\im(d_0)=\im(d_1d_0)$ are not equivalent in general.
The following result essentially states that the \emph{strong} $d_0d_1$\emph{-lemma} of Definition~\ref{def.DGMS} is equivalent to require both the $d_0d_1$-lemma and the $d_1d_0$-lemma.

\begin{lemma}\label{lemma.DGMSd0d1}
Let $A$ be a graded vector space endowed with two differentials $d_0$ and $d_1$. Assume that $[d_0,d_1]=0$. Then the strong $d_0d_1$-lemma
\[ \ker(d_0)\cap\ker(d_1)\cap\left(\im(d_0)+\im(d_1)\right)=\im(d_0d_1) \]
holds if and only if all the items of Lemma~\ref{lemma.DGMS} are satisfyied.
\begin{proof}
We begin by showing that if the strong $d_0d_1$-lemma holds then $\ker(d_1)\cap \im(d_0)= \im(d_0d_1)$. To this aim it is sufficient to show the only non trivial inclusion $\ker(d_1)\cap \im(d_0) \subseteq \im(d_0d_1)$. Take a $d_1$-closed element $x$, such that $x=d_0y$ for some $y\in A$. Notice that
\[ x=d_0y\in \ker(d_0)\cap\ker(d_1)\cap\left(\im(d_0)+\im(d_1)\right) \; . \]
By assumption there exists an element $z\in A$ such that $x=d_0d_1z$. Similarly one can prove that $\ker(d_0)\cap \im(d_1)= \im(d_0d_1)$.

For the converse, assume that conditions \textbf{b} and \textbf{b*} of Lemma~\ref{lemma.DGMS} are both satisfied. We need to show that
\[ \ker(d_0)\cap\ker(d_1)\cap\left(\im(d_0)+\im(d_1)\right)\subseteq\im(d_0d_1) \; . \]
Take an element $x\in A$ such that
\[ \begin{cases}
d_0x=0\\
d_1x=0\\
x=d_0y_0+d_1y_1
\end{cases} \]
Now, since $0=d_0x=d_0d_1y_1$ we have $d_1y_1\in \ker(d_0)\cap\im(d_1)$ and by condition \textbf{b} of Lemma~\ref{lemma.DGMS} there exists $z_1\in A$ such that $d_1y_1=d_0d_1z_1$. Similarly $d_0y_0\in\ker(d_1)\cap\im(d_0)$ and by condition \textbf{b*} of Lemma~\ref{lemma.DGMS} there exists $z_0\in A$ such that $d_0y_0=d_0d_1z_0$. Hence $x=d_0d_1(z_0+z_1)$ as required.
\end{proof}
\end{lemma}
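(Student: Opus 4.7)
The plan is to prove the biconditional by establishing each implication separately, leveraging Lemma~\ref{lemma.DGMS} to reduce ``all items of Lemma~\ref{lemma.DGMS}'' to just the two conditions $\ker(d_1)\cap\im(d_0)=\im(d_0d_1)$ (condition \textbf{b}) and $\ker(d_0)\cap\im(d_1)=\im(d_0d_1)$ (condition \textbf{b*}), since the remaining items in each group are already known to be equivalent.

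For the forward direction, I would take an arbitrary $x\in\ker(d_1)\cap\im(d_0)$ and observe that writing $x=d_0y$ automatically yields $d_0x=0$ from $d_0^2=0$. Hence $x$ lies in $\ker(d_0)\cap\ker(d_1)\cap(\im(d_0)+\im(d_1))$, and the strong $d_0d_1$-lemma gives $x\in\im(d_0d_1)$. The reverse inclusion $\im(d_0d_1)\subseteq\ker(d_1)\cap\im(d_0)$ is immediate from $d_1^2=0$ and $[d_0,d_1]=0$. This establishes \textbf{b}, and an entirely symmetric argument (starting from $x\in\ker(d_0)\cap\im(d_1)$) establishes \textbf{b*}.

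For the converse, I assume both \textbf{b} and \textbf{b*} and take an arbitrary $x\in\ker(d_0)\cap\ker(d_1)\cap(\im(d_0)+\im(d_1))$. The key move is to fix any decomposition $x=d_0y_0+d_1y_1$ and analyse each summand separately. Applying $d_0$ and using $d_0x=0$ yields $d_0d_1y_1=0$, so that $d_1y_1\in\ker(d_0)\cap\im(d_1)$; by \textbf{b*}, $d_1y_1=d_0d_1z_1$ for some $z_1\in A$. Symmetrically, applying $d_1$ and using $d_1x=0$ together with $[d_0,d_1]=0$ gives $d_0y_0\in\ker(d_1)\cap\im(d_0)$, so by \textbf{b} one has $d_0y_0=d_0d_1z_0$. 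Adding the two identities yields $x=d_0d_1(z_0+z_1)\in\im(d_0d_1)$, as required.

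I do not anticipate any serious obstacle. The one point worth flagging is that the decomposition $x=d_0y_0+d_1y_1$ in the converse is not unique, but the argument never needs uniqueness: it suffices to peel off each summand using the appropriate one-sided condition, which is precisely what \textbf{b} and \textbf{b*} were isolated to do. In this sense the content of the lemma is that the symmetric strong $d_0d_1$-statement is exactly the combination of the two asymmetric one-sided statements under the bracket relation $[d_0,d_1]=0$.
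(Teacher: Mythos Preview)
Your proposal is correct and follows essentially the same argument as the paper: the forward direction observes that any $x\in\ker(d_1)\cap\im(d_0)$ is automatically $d_0$-closed and hence falls under the strong $d_0d_1$-lemma, and the converse splits $x=d_0y_0+d_1y_1$ and handles each summand via the appropriate one-sided condition. Your labelling of which condition (\textbf{b} versus \textbf{b*}) is invoked at each step is in fact the correct one; the paper's proof has the two labels swapped, but the underlying logic is identical.
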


For the geometric applications of this paper, we will be interested in studying DGMS vector spaces that are endowed with an associative (not necessarily commutative) product, that is compatible with the differentials.

Recall that an associative DG algebra $(A\,,\,\cdot\,,\,d)$ is a graded vector space $A=\bigoplus_{k\in\mathbb{Z}}A^k$ together with a graded product $\,\cdot\colon A^i\times A^j\to A^{i+j}$ and a differential $d\colon A^k\to A^{k+1}$ satisfying $d^2=0$ and the (graded) Leibniz rule:
\[ d(a\cdot b)=d(a)\cdot b + (-1)^{\deg(a)}a\cdot d(b) \; . \]
for every homogeneous elements $a,b\in A$.

\begin{defn}\label{def.DGMSalgebra}
An \emph{associative DGMS algebra} $(A,\cdot,d_0,d_1)$ is the datum of a DGMS vector space $(A,d_0,d_1)$ such that both $(A,\cdot,d_0)$ and $(A,\cdot,d_1)$ are associative DG algebras.
\end{defn}

\begin{example}[de Rham algebra]\label{example:de Rham}
Let $(A,\wedge, d_0=\operatorname{d}_{\operatorname{dR}}, d_1=\operatorname{d}_{\operatorname{dR}}^c)$ be the de Rham algebra of $C^{\infty}$-valued differential forms over a complex manifold endowed with the wedge product. Here $\operatorname{d}_{\operatorname{dR}}$ is the de Rham differential and $\operatorname{d}_{\operatorname{dR}}^c$ is its Hodge adjoint. Then by the $\operatorname{d}_{\operatorname{dR}}\operatorname{d}_{\operatorname{dR}}^c$-lemma (together with the Hodge theorem -- see \cite{DGMS}) and Lemma~\ref{lemma.DGMSd0d1} we have that $A$ is an associative DGMS algebra. Notice that in this case the associative product is graded commutative. 
\end{example}

A morphism of associative DG algebras is simply a morphism of DG vector spaces that preserves the products. A morphism of associative DG algebras is called a \emph{quasi}-\emph{isomorphism} if the induced morphism in cohomology is a degreewise isomorphism.

Given an associative DG algebra $(A,\cdot,d)$, for every $a,b\in A$ we have
\[ da=db=0\; \Rightarrow\; d(a\cdot b)=0, \qquad \mbox{ and } \qquad db=0\; \Rightarrow\; (da)\cdot b=d(a\cdot b) \; ,\]
so that the cohomology $\oH(A)$ inherits the product and it is an associative DG algebra (with trivial differential).

\begin{defn}\label{defn.associativeformality}
An associative DG algebra $A$ is said to be \emph{formal} if it is quasi isomorphic to its cohomology; i.e.\ if there exists another associative DG algebra $M$ together with a pair of quasi-isomorphisms of associative DG algebras
\[ (A\,,\,\cdot\,,\,d_A) \leftarrow (M\,,\,\cdot\,,\,d_M) \rightarrow (\oH_{d_A}(A)\,,\, \cdot\,,\,0) \, . \]
\end{defn}

Formality for DG algebras has many geometric consequences. For example Deligne, Griffiths, Morgan, Sullivan~\cite[Section 6]{DGMS} proved that the de Rham DG algebra $(A,\wedge,\operatorname{d}_{\operatorname{dR}})$ of a compact K\"ahler manifold $X$ described in Example~\ref{example:de Rham} is formal, hence showing that the real homotopy type of $X$ is uniquely determined by its cohomology algebra $\oH^{\ast}(X,\mathbb{R})$.

The following formality result is largely inspired to~\cite[Section 6]{DGMS}.

\begin{thm}[Formality for DGMS algebras]\label{thm.DGMS}
Let $(A,\cdot,d_0,d_1)$ be an associative DGMS algebra. Then $(A,\cdot,d_0)$ is formal.
\begin{proof}
Denote by $A_1=\ker(d_1)\subseteq A$ the subspace of $d_1$-closed elements. Notice that $A_1$ is an associative DG sub-algebra of $A$.
Now consider the following morphisms of DG algebras
\[ (A,\cdot,d_0) \xleftarrow{\iota} (A_1,\cdot,d_0) \xrightarrow{\rho} (\oH_{d_1}(A),\cdot,d_0) \; ; \]
here $\iota$ is the natural inclusion and $\rho$ is the natural projection. By Lemma~\ref{lemma.DGMScohomology} the induced differential $d_0$ on $\oH_{d_1}(A)$ is trivial, so that it only remains to be shown that both $\iota$ and $\rho$ induce isomorphisms in cohomology. \begin{description}
\item[$\iota$-step] Consider a $d_0$-closed element $x\in A$ and notice that $d_1x\in \ker(d_0)\cap\im(d_1)$. By the strong $d_0d_1$-lemma there exists $y\in A$ such that $d_1x=d_0d_1y=-d_1d_0y$. Now define $\tilde{x}=x+d_0y\in A_1$, $\iota$ maps the $d_0$-cohomology class of $\tilde{x}$ in the $d_0$-cohomology class of $x$. This proves the surjectivity of $\iota$ in cohomology. For the injectivity take $x\in A_1$ such that $\iota(x)=x$ is $d_0$-exact in $A$, so that $x\in\ker(d_1)\cap\im(d_0)$ and again by the strong $d_0d_1$-lemma we have $x=d_0d_1y$ for some $y\in A$ so that $x$ is $d_0$-exact also in $A_1$.
\item[$\rho$-step] Consider a class $[x]\in \oH_{d_1}(A)$ represented by a $d_1$-closed element $x\in A$. Notice that $d_0x\in\ker(d_1)\cap\im(d_0)$, being $[d_0,d_1]=0$. By the $d_0d_1$-lemma there exists $y\in A$ such that $d_0x=d_0d_1y$. Now consider the element $\tilde{x}=x+d_0y \in\ker(d_1)\subseteq A_1$, which has the same $d_0$-cohomology class as $x$ in $\oH_{d_1}(A)$. This proves the surjectivity of $\rho$ in cohomology. For the injectivity take $x\in A_1$ such that $\rho(x)=[x]=0$ in $(\oH_{d_1}(A),d_0)$. Then $x\in\ker(d_0)\cap\im(d_1)$ since $d_0$ is trivial on $\oH_{d_1}(A)$; therefore by the strong $d_0d_1$-lemma there exists $y\in A$ such that $x=d_0d_1y$. Hence $[x]=[x']=0$ in $\oH_{d_0}(A_1)$ as required.
\end{description}
We have proven that $(A,\cdot,d_0)$ is quasi-isomorphic to a DG algebra with trivial differential: i.e.\ $(A,\cdot,d_0)$ is formal.
\end{proof}
\end{thm}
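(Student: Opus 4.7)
The plan is to build the zig-zag of associative DG algebra quasi-isomorphisms characteristic of the DGMS approach:
\[ (A,\cdot,d_0)\;\xleftarrow{\iota}\;(A_1,\cdot,d_0)\;\xrightarrow{\rho}\;(\oH_{d_1}(A),\cdot,0), \]
where $A_1:=\ker(d_1)$, the map $\iota$ is the natural inclusion, and $\rho$ is the quotient by $\im(d_1)$. First I would verify that $A_1$ is indeed an associative sub-DG-algebra of $(A,\cdot,d_0)$: stability under the product is the Leibniz rule for $d_1$, while stability under $d_0$ is $[d_0,d_1]=0$. Lemma~\ref{lemma.DGMScohomology} then guarantees that the differential induced by $d_0$ on $\oH_{d_1}(A)$ is trivial, so the rightmost DG algebra is honestly a graded algebra with zero differential, coinciding with its own $d_0$-cohomology.

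The bulk of the work is to check that both $\iota$ and $\rho$ are quasi-isomorphisms with respect to $d_0$. For $\iota$: given a $d_0$-closed element $x\in A$, the element $d_1 x$ sits in $\ker(d_0)\cap\im(d_1)$, which by Lemma~\ref{lemma.DGMSd0d1} equals $\im(d_0d_1)$; writing $d_1 x = d_0 d_1 y = -d_1 d_0 y$ shows that $\tilde x := x + d_0 y$ lies in $A_1$ and has the same $d_0$-cohomology class as $x$, which gives surjectivity. For injectivity, any $x\in A_1$ that is $d_0$-exact in $A$ sits in $\ker(d_1)\cap\im(d_0)=\im(d_0d_1)$, so $x=d_0(d_1 w)$ with $d_1 w\in A_1$. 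For $\rho$, the argument is the mirror image: a class $[x]\in\oH_{d_1}(A)$ admits a representative with $d_0 x\in\ker(d_1)\cap\im(d_0)=\im(d_0d_1)$, and the correction $\tilde x := x - d_1 y$ produces a $d_0$-closed element of $A_1$ in the same $d_1$-class; conversely, an $x\in A_1$ which is $d_0$-closed with $[x]=0$ in $\oH_{d_1}(A)$ lies in $\ker(d_0)\cap\im(d_1)=\im(d_0d_1)$, hence is $d_0$-exact already inside $A_1$.

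The main technical obstacle is the careful bookkeeping of which half of the $d_0d_1$-lemma is invoked at each step: the conditions \textbf{b} and \textbf{b*} of Lemma~\ref{lemma.DGMS} play symmetric but distinct roles, and the strength of Definition~\ref{def.DGMS} is precisely that both are available simultaneously. Once this is in place, the composition delivers a zig-zag of honest associative DG algebra quasi-isomorphisms between $(A,\cdot,d_0)$ and $(\oH_{d_1}(A),\cdot,0)$. Since the latter has zero differential and is isomorphic as a graded algebra to $\oH_{d_0}(A)$ via the zig-zag, this establishes formality of $(A,\cdot,d_0)$.
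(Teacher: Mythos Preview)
Your proposal is correct and follows essentially the same route as the paper: the identical zig-zag $(A,\cdot,d_0)\xleftarrow{\iota}(\ker d_1,\cdot,d_0)\xrightarrow{\rho}(\oH_{d_1}(A),\cdot,0)$, with the same appeals to Lemma~\ref{lemma.DGMScohomology} and to the two halves of the strong $d_0d_1$-lemma for the four surjectivity/injectivity checks. Your correction $\tilde x = x - d_1 y$ in the $\rho$-surjectivity step is in fact the cleaner way to produce a $d_0$-closed representative in the same $d_1$-class, whereas the paper's written choice $\tilde x = x + d_0 y$ appears to be a slip.
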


\begin{rmk}\label{rmk:same cohomology}
The proof of Theorem~\ref{thm.DGMS} shows that the cohomologies of any associative DGMS algebra with respect to the two differentials are the same, i.e.\ $\oH_{d_0}(A)\cong \oH_{d_1}(A)$. By the symmetry of Definition~\ref{def.DGMS} we also proved that $\left(A,\cdot,d_1\right)$ is formal.
Moreover, it is straightforward to check that the isomorphism of graded vector spaces $\oH_{d_0}(A)\cong \oH_{d_1}(A)$ preserves the associative product so that $\left(A,\cdot,d_0\right)$ and $\left(A,\cdot,d_1\right)$ are quasi-isomorphic DG algebras.
\end{rmk}

The following result can be easily proved but it will lead to powerful applications.

\begin{prop}\label{prop.DGMStrick}
Let $(A\,,\,\cdot\,,\,d_0\,,\,d_1)$ be an associative DGMS algebra. Then $(A\,,\,\cdot\,,\,d_0+d_1\,,\,d_1)$ is an associative DGMS algebra.
\begin{proof}
For simplicity of exposition let us denote by $\delta_0=d_0+d_1$. It is immediate to see that
\[ \delta_0^2=d_1^2=[\delta_0,d_1]=0 \; . \]
Moreover, $\delta_0=d_0+d_1$ is still a derivation. Let us prove the strong $\delta_0d_1$-lemma. We begin by showing $\ker(\delta_0)\cap im(d_1)\subseteq im(\delta_0d_1)$. Fix an element $a\in A$ such that $\delta_0a=0$ and $a=d_1b$. Then $d_0a=\delta_0a=0$ so that by hypothesis there exists $c\in A$ such that $a=d_0d_1c=\delta_0d_1c$. By Lemma~\ref{lemma.DGMSd0d1}, we are only left with the proof of $\ker(d_1)\cap im(\delta_0)\subseteq im(\delta_0d_1)$. To this aim, fix $a\in A$ such that $d_1a=0$ and $a=\delta_0b$. Then define $a'=a-d_1b$ and notice that
\[ d_1a'=0 \qquad \mbox{ and } \qquad a'=d_0b \; . \]
Therefore there exists $c\in A$ such that $a'=d_0d_1c=\delta_0d_1c$. We have
\[ a=\delta_0d_1c+\delta_1b=d_1(b-\delta_0c)\in \ker(\delta_0)\cap im(d_1) \subseteq im(\delta_0 d_1) \]
and the statement follows.
\end{proof}
\end{prop}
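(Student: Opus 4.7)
The plan is to set $\delta_0 \defeq d_0+d_1$ and systematically check the four items in the definition of associative DGMS algebra for the pair $(\delta_0, d_1)$. The first few are immediate: $\delta_0^2 = d_0^2 + [d_0,d_1] + d_1^2 = 0$ and $[\delta_0, d_1] = [d_0,d_1] + [d_1,d_1] = 0$, while $d_1^2 = 0$ and the compatibility of $\delta_0 = d_0 + d_1$ with the associative product come for free, since a sum of derivations is a derivation. All the content therefore lies in verifying the strong $\delta_0 d_1$-lemma.

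To handle this, I would first observe that $\delta_0 d_1 = d_0 d_1$ because $d_1^2 = 0$, so that the target ideal $\im(\delta_0 d_1)$ coincides with the familiar $\im(d_0 d_1)$. By Lemma~\ref{lemma.DGMSd0d1} it then suffices to prove the two one-sided conditions
\[ \ker(\delta_0)\cap\im(d_1)\subseteq \im(d_0 d_1),\qquad \ker(d_1)\cap \im(\delta_0)\subseteq \im(d_0 d_1). \]

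For the first, if $\delta_0 a = 0$ and $a = d_1 b$, then $d_1 a = d_1^2 b = 0$ forces $d_0 a = \delta_0 a - d_1 a = 0$, so $a \in \ker(d_0)\cap\im(d_1) = \im(d_0 d_1)$ by the hypothesis on the original DGMS structure.

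The main obstacle is the second inclusion, since the decomposition $a = d_0 b + d_1 b$ mixes the two differentials. My plan is: set $a' \defeq a - d_1 b = d_0 b$. Then $d_1 a' = 0$ and $a'$ is $d_0$-exact, so by the original strong $d_0 d_1$-lemma there exists $c$ with $a' = d_0 d_1 c$. Using $[d_0,d_1]=0$ to rewrite $d_0 d_1 c = - d_1 d_0 c$, one gets
\[ a = d_0 d_1 c + d_1 b = d_1(b - d_0 c) \in \im(d_1). \]
Moreover $\delta_0 a = \delta_0^2 b = 0$, so $a \in \ker(\delta_0)\cap\im(d_1)$, and the first inclusion (already proven) finishes the job. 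The only subtlety to watch is the sign manipulation $d_0 d_1 = -d_1 d_0$, which is precisely where the commutator hypothesis $[d_0,d_1]=0$ is used nontrivially.
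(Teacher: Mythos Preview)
Your proof is correct and follows essentially the same route as the paper's own argument: reduce via Lemma~\ref{lemma.DGMSd0d1} to the two one-sided inclusions, dispatch $\ker(\delta_0)\cap\im(d_1)$ directly, and for $\ker(d_1)\cap\im(\delta_0)$ subtract off $d_1 b$ to reduce to the first case. Your explicit remark that $\delta_0 d_1 = d_0 d_1$ is a nice clarification that the paper leaves implicit.
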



\subsection{The Lie case}
We briefly review some constructions in the Lie case.
Fix a field $\mathbb{K}$ of characteristic $0$. Recall that a DG Lie algebra $(L,d,[-,-])$ is the data of a graded vector space $L=\bigoplus_{k\in\mathbb{Z}}L^k$ together with a graded bracket $\,[-,-]\colon L^i\times L^j\to L^{i+j}$ and a differential $d\colon L^k\to L^{k+1}$ satisfying $d^2=0$ and the following conditions
\begin{enumerate}
    \item graded Leibniz rule: $d[a,b]=[da,b] + (-1)^{\deg(a)}[a,db]$,
    \item graded skew-symmetry: $[a,b]+(-1)^{\deg(a)\deg(b)}[b,a]= 0$,
    \item graded Jacobi identity: $[a,[b,c]]=[[a,b],c]+(-1)^{\deg(a)\deg(b)}[b,[a,c]]$,
\end{enumerate}
for every homogeneous elements $a,b,c\in L$.

Notice that since char$(\mathbb{K})=0$ the above conditions also imply
\begin{itemize}
    \item $[a,a]=0$ for every homogeneous element $a$ of even degree,
    \item Bianchi identity: $[a,[a,a]]=0$ for every homogeneous element $a$ of odd degree.
\end{itemize}

\begin{example}[Dolbeault DG Lie algebra]\label{ex.DolbeaultDGLie}
Let $(E,\debar_E)$ be a holomorphic vector bundle on a complex manifold $X$. Then
\[ \left(A^{0,\ast}(End(E))\,,\,[\debar_E,-]\,,\,[-,-]\right) \]
is the Dolbeault DG Lie algebra, where $A^{0,\ast}$ is the algebra of $C^{\infty}$-valued $(0,\ast)$-forms, so that $A^{0,\ast}(End(E))$ are the $(0,\ast)$-forms with values in the endomorphism bundle $End(E)$. Here
\begin{description}
    \item[differential] $[\debar_E,-]$ is the holomorphic structure on $End(E)$ induced by the fixed holomorphic structure on $E$; 
    \item[bracket] $[\alpha\otimes e,\beta\otimes f]=\alpha\wedge\beta\,[e,f]$, where $[e,f]$ is the usual commutator between holomorphic local sections of $End(E)$.
\end{description}
\end{example}

A morphism of DG Lie algebras is simply a morphism of DG vector spaces that preserves the brackets. We shall say that a morphism of DG Lie algebras is a \emph{quasi-isomorphism} if the induced morphism in cohomology is a degreewise isomorphism.

Given a DG Lie algebra $(L,d,[-,-])$, for every $a,b\in L$ we have
\[ da=db=0\; \Rightarrow\; d[a,b]=0, \qquad \mbox{ and } \qquad db=0\; \Rightarrow\; [da,b]=d[a,b] \; ,\]
so that the cohomology $\oH(L)$ inherits the bracket and is a DG Lie algebra (with trivial differential).

\begin{defn}
Let $L=(L,d,[-,-])$ be a DG Lie algebra. Then
\begin{enumerate}
\item A DG Lie algebra is called \emph{formal} if it is quasi isomorphic to its cohomology as a DG Lie algebra.
\item A DG Lie algebra is called \emph{homotopy abelian} if it is quasi isomorphic to an abelian DG Lie algebra.
\end{enumerate}
\end{defn}

The above definition plays a crucial role in several geometric application. As we will recall more precisely later on, the idea is that in characteristic $0$ to any DG Lie algebra it is associated a deformation problem, and quasi isomorphic DG Lie algebras induce the same deformation problem.
Hence a deformation problem corresponding to a formal (or homotopy abelian) DG Lie algebra enjoys particularly nice geometric properties, see e.g. Corollary~\ref{cor.formalityimpliesquadraticity}. 

\begin{rmk}\label{rmk.Lieformality}
Of course any associative DG algebra induces a DG Lie algebra, simply by defining the (graded) bracket as the (graded) commutator. On the other hand, as in the classical case, not every DG Lie algebra comes from an associative DG algebra. Therefore if an associative DG algebra is formal, then the induced DG Lie algebra is formal. The converse does not hold in general.
\end{rmk}

Notice that the Dolbeault DG Lie algebra of Example~\ref{ex.DolbeaultDGLie} has a bracket that comes from a natural associative product, so that we will often refer to it both as an associative DG algebra and as a DG Lie algebra.

One of the main tasks for many geometric applications to moduli spaces is to understand when the Dolbeault DG algebra is formal (either in the associative or in the Lie sense), see e.g.~\cite{AS,BaMaMe21,BZ,KL07,Zh12}. In this paper we will provide sufficient conditions for the associative Dolbeault DG algebra to be formal.

\begin{rmk}\label{rmk.formalityVShomotopyabelianity}
It is easy to see that a DG Lie algebra $L$ is homotopy abelian if and only if it is formal (in the Lie sense of Remark~\ref{rmk.Lieformality}) and the bracket induced in cohomology is trivial. In fact, if $L$ is homotopy abelian then up to quasi-isomorphisms we may assume that $L$ has trivial bracket; then $L\rightarrow \frac{L}{\im(d_L)}\leftarrow H(L)$ is a pair of quasi-isomorphisms and $A$ is formal. The converse is immediate.
\end{rmk}

One can define a DGMS \emph{Lie algebra} as a DGMS vector space $(L,d_0,d_1)$ with a Lie bracket $[-,-]$ such that both $(L,d_0,[-,-])$ and $(L,d_1,[-,-])$ are DG Lie algebras. Everything we said above can be rephrased verbatim for DGMS Lie algebras, with very little modifications.
Our choice to work with associative algebras is motivated by the applications to study the local structure of moduli spaces of vector bundles.


\subsection{Deformation functor associated to a DG Lie algebra}\label{section.DefLie}
Let us briefly recall some well known facts about the approach to infinitesimal deformation theory via DG Lie algebras.

\begin{defn}\label{def:strongMC}
Let $(L,d,[-,-])$ be a DG Lie algebra, and fix an element $x\in L^1$.
\begin{enumerate}
    \item $x$ is a \emph{classical} solution to the Maurer--Cartan equation if $dx+\frac{1}{2}[x,x]=0$.
    \item $x$ is a \emph{strong} solution to the Maurer--Cartan equation if
\[ dx=0\qquad\mbox{ and }\qquad  [x,x]=0 \; . \]
\end{enumerate}
\end{defn}

Recall that to every homotopy class of DG Lie algebras $L$ can be associated a deformation functor $\Def_L\colon \Art_{\mathbb{C}}\to \Sets$ via Maurer--Cartan equation modulus gauge action, where $\Art_{\mathbb{C}}$ denotes the category of local Artin $\mathbb{C}$-algebras with residue field $\mathbb{C}$, see e.g.~\cite{Ma09, Ma22}.

The leading principle behind this approach states that every deformation problem defined by a deformation functor $\Def\colon \Art_{\CC}\to \Sets$ is controlled by a DG Lie algebra $L$; i.e. $\Def\cong\Def_L$ (see e.g.~\cite{GM88}).
A fundamental result is the homotopy invariance (cf. e.g.~\cite[Corollary 6.6.3]{Ma22}) which says that quasi isomorphic DG Lie algebras give rise to naturally isomorphic deformation functors. This clearly implies that the notion of formality is crucial in deformation theory: indeed if a DG Lie algebra $L$ is formal then $\Def_L\cong\Def_{\oH(L)}$ so that one can reduce the study to the \emph{strong} Maurer--Cartan solutions. These are going to characterise the autodual connections (cf. Lemma~\ref{lemma:autodual are flat}).

We will not recall in detail the technical gauge action defining the deformation functor associated to a DG Lie algebra $L$. Nevertheless, the non expert reader may think of it as a perturbation (depending on the differential of $L$) of the exponential adjoint action
\[ \exp(L^0\otimes\mathfrak{m}_B)\times \MC(L^1\otimes\mathfrak{m}_B)\to \MC(L^1\otimes\mathfrak{m}_B) \qquad\qquad (e^{ad(a)}\,,\, b) \mapsto e^{ad(a)}(b)=\sum_{k\geq0}\frac{(ad(a))^k}{k!}(b) \]
for any $B\in\Art_{\CC}$. Anyway, for a DG Lie algebra with trivial differential the gauge action is precisely given by such adjoint action. Hence for a formal DG Lie algebra the deformation functor is $\Def_L\cong \Def_{\oH(L)}$ and the latter is simply given by the quadratic cone modulo the exponential adjoint action
\[ \Def_{\oH(L)}(B)=\nicefrac{\left\{\mbox{$b\in H^1(L)\otimes\mathfrak{m}_B\;\;\vert\;\; [b,b]=0$}\right\}}{\sim} \; , \qquad  \mbox{ for every } B\in\Art_{\CC} \; .\]
In the literature there exist several useful formality criteria for DG Lie algebras~\cite{Ma15,BaMaMe21}.

Another useful way to concretely understand formality is the following. If a DG Lie algebra $L$ is formal, then the maps
\[ \Def_L\left(\nicefrac{\CC[t]}{(t^k)}\right) \to \Def_L\left(\nicefrac{\CC[t]}{(t^2)}\right) \qquad \mbox{ and } \qquad \Def_L\left(\nicefrac{\CC[t]}{(t^3)}\right) \to \Def_L\left(\nicefrac{\CC[t]}{(t^2)}\right) \]
have the same image for every $k\geq3$. Essentially this means that there are only \emph{quadratic obstructions} for ``lifting'' first order deformations to higher order ones. Moreover, if the bracket induced in the first cohomology $\oH^1(L)\cong\Def_L\left(\nicefrac{\CC[t]}{(t^2)}\right)$ is trivial then this obstructions vanish and the maps above are surjective; i.e. $\Def_L$ is unobstructed.


\section{Autodual and hyper-holomorphic connections on ihs manifolds}\label{section.claudio}

For background on the general theory of irreducible holomorphic symplectic manifolds we refer to \cite{GHJ}.
\begin{defn}
A compact K\"ahler manifold $X$ is called \emph{irreducible holomorphic symplectic} if it is simply connected and $\oH^0(\Omega^2_X)=\CC\cdot\sigma$, where $\sigma$ is a holomorphic symplectic form.
\end{defn}
By definition a symplectic form is non-degenerate, therefore $\sigma$ induces a canonical isomorphism $\Theta_X\cong\Omega^1_X$, where $\Theta_X$ is the holomorphic tangent bundle of $X$. Moreover, the previous isomorphism being symplectic, we have that $\dim X=2n$. The holomorphic volume form $\sigma^n\in\oH^0(\Omega_X^{2n})$ is also non-degenerate, so that $\Omega_X^{2n}=K_X$ is trivial. In particular, by Yau's solution of Calabi's conjecture (\cite{Yau.Calabi}), any K\"ahler class is represented by the class associated to a Ricci-flat metric.
More precisely, let us write $X=(M,I)$, where $I$ is the complex structure, and let us fix a K\"ahler class $\omega$ on $X$. Then there exists a Ricci-flat metric $g$ on $M$ such that $\omega(\cdot,\cdot)=g(I(\cdot),\cdot)$. Moreover, since $(M,g,I)$ is also simply connected and irreducible symplectic, its holonomy group with respect to $g$ is the symplectic group $\Sp(n)$ (recall that $2n=\dim X$). By the holonomy principle, there exist other two complex K\"ahler structures on $(M,g)$, denoted by $J$ and $K$, that satisfies the usual quaternionic relations, i.e.\ $IJ=-JI=K$.
The corresponding K\"ahler classes are $\omega=\omega_I$, $\omega_J$ and $\omega_K$, and the symplectic form is $\sigma=\omega_J+i\omega_K$. The specification of the three complex K\"ahler structures $I$, $J$ and $K$ is called a \emph{hyper-K\"ahler structure} on $(M,g)$. If $a,b,c\in\lR$ are such that $a^2+b^2+c^2=1$, then the quasi-complex structure $L=aI+bJ+cK$ is integrable and K\"ahler. Viceversa, any complex K\"ahler structure $L$ on $(M,g)$ arising from the hyper-K\"ahler structure as before is called an \emph{induced complex structure}. The set of induced complex structures is compactly encoded in the notion of the \emph{twistor family} $\tau\colon\mathcal{X}\to\lP^1$ (cf.\ \cite[Section~3(F)]{HKLR}). The space $\mathcal{X}$ is diffeomorphic to the product $M\times S^2$ (here $S^2$ is the differential sphere underlying $\lP^1$) and has a natural integrable complex structure such that the projection $\tau$ is holomorphic. If $(a,b,c)\in S^2$, then $\tau^{-1}(a,b,c)=(M,aI+bJ+cK)$. Vice versa, the twistor family $\tau\colon\mathcal{X}\to\lP^1$ recover the hyper-K\"ahler structure on $M$ (\cite[Theorem~3.3]{HKLR}). Notice that the other projection $q\colon\mathcal{X}\to X$ is not holomorphic.

In the following we fix once and for all a K\"ahler class $\omega$ on $X=(M,I)$. By the above discussion there is then a fixed hyper-K\"ahler structure on the associated $(M,g)$, which we shorten by writing $(X,\omega)=(M,g,I,J,K)$.

The three complex structures $I$, $J$ and $K$ act on the tangent bundle $T_M$ and so also on the cotangent bundle $T_M^{\ast}$ and all of its tensor products. In particular, there is a natural action of $I$, $J$ and $K$ on the cotangent algebra $\Lambda^{\ast}T_M^{\ast}$ induced by forcing the Leibniz rule. Since $I$, $J$ and $K$ satisfy the quaternionic relations, there is a well-defined action of the Lie algebra $\mathfrak{su}(2)$ on $\Lambda^{\ast}T_M^{\ast}$. Moreover, since the Lie group $\operatorname{SU}(2)$ is simply connected, the Lie algebra action integrates to an action of the Lie group. We resume here two important remarks due to Verbitsky.  
\begin{prop}[\protect{\cite[Proposition~1.1, Proposition~1.2]{Verbitsky:JAG1996}}]\label{prop:action of SU}
Keep notations as above.
\begin{enumerate}
\item A $k$-form $\alpha$ is $\SU(2)$-invariant if and only if $k=2p$ and $\alpha$ is of type $(p,p)$ with respect to any induced complex structure.
\item The action of $\SU(2)$ on $\Lambda^{\ast}T_M^{\ast}$ commutes with the laplacian operator. In particular it  descends to an action on the cohomology $\oH^{\ast}(M,\lR)$.
\end{enumerate}
\end{prop}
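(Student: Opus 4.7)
The plan is to handle the two items separately; both rely on the standard description of the $\SU(2)$-action on $\Lambda^{\ast}T_M^{\ast}$ as being generated by extending $I$, $J$, $K$ as derivations via the Leibniz rule.

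For item (1), the starting point is the purely linear-algebraic fact that for any almost complex structure $L$ on a real vector space $V$, its Leibniz-extension to $\Lambda^{\ast}V^{\ast}\otimes\CC$ acts on the $(p,q)$-part (with respect to $L$) as multiplication by the scalar $i(p-q)$. Exponentiating, the $1$-parameter subgroup $\{\exp(tL)\}\subset\SU(2)$ acts on $(p,q)$-forms by the character $e^{it(p-q)}$, so any form fixed by this $S^1$ is forced to satisfy $p=q$, hence to have even total degree $k=2p$ and to be of type $(p,p)$ with respect to $L$. Applying this to every induced complex structure $L$ gives the "only if" direction. For the converse, I would observe that the real span of the sphere of induced complex structures $\{aI+bJ+cK:a^2+b^2+c^2=1\}$ is already the whole $3$-dimensional Lie algebra $\su(2)$; since $\SU(2)$ is connected and is generated by the $1$-parameter subgroups coming from $\su(2)$, the union of the circles $\{\exp(tL)\}_{L\in S^2}$ generates the full group. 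A form that is $(p,p)$ with respect to every induced complex structure is therefore fixed by all these circles, and hence by the whole $\SU(2)$.

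For item (2), I would exploit that the Laplacian $\Delta = dd^{\ast}+d^{\ast}d$ depends only on the Riemannian metric $g$, while $(M,g,L)$ is K\"ahler for every induced complex structure $L$; in particular for $L=I,J,K$. By the classical K\"ahler identities applied to each of these three K\"ahler structures, $\Delta$ preserves the corresponding $(p,q)$-decomposition of $\Lambda^{\ast}T_M^{\ast}\otimes\CC$. Since, as recalled above, the Leibniz action of $L$ on forms coincides with multiplication by the scalar $i(p-q)$ on the $(p,q)$-part, it follows that $\Delta$ commutes with each of the three generators $I$, $J$, $K$ of the $\su(2)$-action, and therefore with the entire $\SU(2)$-action. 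Combining this with the Hodge theorem on the compact Riemannian manifold $M$, the $\SU(2)$-action preserves the space of harmonic forms and hence descends to an action on $\oH^{\ast}(M,\lR)$.

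The main obstacle, as I see it, is really the algebraic input in item (1): one has to check that the $S^1$-subgroups associated to the (two-dimensional) sphere of induced complex structures generate all of $\SU(2)$, which boils down to the fact that $I$, $J$, $K$ form a basis of $\su(2)\cong\lR^3$. Once this algebraic point is in place, the rest of item (1) reduces to the scalar action of each $S^1$ on bidegrees, and item (2) becomes a direct consequence of the K\"ahler identities applied to each of the three K\"ahler structures together with Hodge theory.
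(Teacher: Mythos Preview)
The paper does not give its own proof of this proposition: it simply records the statement and attributes it to Verbitsky~\cite[Proposition~1.1, Proposition~1.2]{Verbitsky:JAG1996}. There is therefore nothing to compare against in the present paper; your argument stands on its own and is correct, and it is essentially the standard proof.

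One small remark on what you flag as the ``main obstacle'' in item~(1). You phrase it as showing that the circles $\{\exp(tL)\}_{L\in S^2}$ \emph{generate} $\SU(2)$, but in fact something stronger and simpler is true: since $\SU(2)$ is compact and connected, the exponential map $\exp\colon\su(2)\to\SU(2)$ is surjective, and every nonzero $X\in\su(2)$ is of the form $X=rL$ with $r>0$ and $L$ a unit vector, i.e.\ an induced complex structure. Hence every element of $\SU(2)$ already lies on one of these circles, and no product of circle elements is needed. With this observation your ``if'' direction in item~(1) becomes immediate. Item~(2) is exactly as you say: the K\"ahler identities for each of the three K\"ahler structures $(M,g,I)$, $(M,g,J)$, $(M,g,K)$ force $\Delta$ to preserve the corresponding bigradings, hence to commute with $I$, $J$, $K$ and thus with all of $\SU(2)$.
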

We recall here the main definitions and notations used in the rest of the paper. First of all, on the K\"ahler manifold $X=(M,g,I)$ with K\"ahler class $\omega$ we consider the Lefschetz operator 
\[ L_\omega\colon\Lambda^kT^{\ast}M\longrightarrow\Lambda^{k+2}T^{\ast}M \]
given by multiplying a cohomology class by the K\"ahler class $\omega$, and its adjoint operator
\[ \Lambda_\omega\colon\Lambda^{k}T^{\ast}M\longrightarrow\Lambda^{k-2}T^{\ast}M \]
defined via the metric $g$.

If $d$ is the usual de Rham operator, then we denote by $\partial$ and $\debar$ the Dolbeault operator associated to the choosen complex structure $I$.

If $E$ is a complex vector bundle on $X=(M,g,I)$, then we denote by $\sA^k(E)$ the sheaf of complex-valued $C^\infty$ $k$-forms with coefficients in $E$, and by $A^k(E)$ the corresponding space of global sections. Furthermore, we denote by $\cA_I^{p,q}(E)$ and $A_I^{p,q}(E)$ the sheaf and space of global section with respect to the complex structure $I$.

Recall that a \emph{connection} on $E$ is a $\CC$-linear map
\[ \nabla\colon A^0(E)\longrightarrow A^1(E) \]
satisfying the Leibniz rule 
\[ \nabla(fe)=df e+f\nabla(e)\]
for every $C^\infty$ function $f$ and section $e$ of $E$. Decomposing $A^1(E)$ in its $\pm i$-eigenspaces with respect to $I$, we usually write $\nabla=\nabla^{1,0}+\nabla^{0,1}$ accordingly.

Extending $\nabla$ to a degree $1$ operator on the DG vector space $A^\bullet(E)$, one can easily check that $\nabla^2$ is in fact $A^0(E)$-linear and hence it defines an element $R=\nabla^2\in A^2(End(E))$, where $End(E)$ is the endomorphism bundle of $E$.

Recall also that a \emph{holomorphic structure} on $E$ is a $\CC$-linear map
\[ \debar_E\colon A^0(E)\longrightarrow A^{0,1}(E) \]
satisfying the analogous Leibniz rule and such that $\debar_E^2=0$. (This definition of holomorphic structure on $E$ is justified by the Koszul--Malgrange integrability theorem, \cite{Koszul-Malgrange}.)

If $(E,\debar_E)$ is a holomorphic vector bundle, a connection $\nabla$ is compatible with the holomorphic structure if $\nabla^{0,1}=\debar_E$. Finally recall that if $E$ admits a hermitian metric, then the \emph{Chern connection} on $E$ is the unique connection that is compatible with both the holomorphic structure and the hermitian metric.

The Lefschetz operators $L_\omega$ and $\Lambda_\omega$ are extended to forms with coefficients in a complex vector bundle by acting with $\operatorname{Id}_E$ on the coefficient side.

The following definition is due to Kaledin and Verbitsky (see \cite{Kaledin.Verbitsky98}).
\begin{defn}\label{defn:YM connection}
A \emph{Yang--Mills connection} $\nabla$ on $E$ is a connection such that its curvature $\nabla^2$ is of type $(1,1)$ and 
\[ \Lambda_\omega \nabla^2=0.\]
\end{defn}
\begin{rmk}
In general one can require that $\Lambda_\omega R=c \operatorname{Id}$, and in this case one talks of Yang--Mills connections of charge $c$. Our choice to restrict to Yang--Mills connections of charge $0$ will be clarified by Definition~\ref{defn:hyper-holomorphic} and \cite[Lemma~2.1]{Verbitsky:JAG1996}.
\end{rmk}
\begin{rmk}
Let $(E,\debar_E)$ be a hermitian holomorphic vector bundle, and let $\nabla$ be a connection compatible with the holomorphic structure. If $\nabla^2$ is of type $(1,1)$ and  $\Lambda_\omega \nabla^2=c\operatorname{Id}$, then $c$ must be a multiple of $\mu(E)=\deg_\omega(E)/\rk(E)$. In particular, holomorphic vector bundles having a hermitian Yang--Mills connection (compatible with the holomorphic structure) must have  first Chern class of degree $0$.
\end{rmk}

\begin{rmk}
Notice that in the literature one usually talks about Yang--Mills hermitian metrics on a holomorphic vector bundle $E$, and the associated Chern connection is referred to as the Yang--Mills connection. On the other hand, the connections in Definition~\ref{defn:YM connection} are not necessary hermitian (nor the bundle is assumed holomorphic a priori). In \cite{Kaledin.Verbitsky98} these connections are called \emph{non-hermitian Yang--Mills}, but we find the name a bit misleading, since hermitian connections are in fact of this type. When referring to a Yang--Mills connection that is the Chern connection of a Yang--Mills metric, we simply call it a \emph{hermitian Yang--Mills connection}.
\end{rmk}
\begin{rmk}
If $\nabla$ is a Yang--Mills connection on $E$, then $\nabla^{0,1}$ is a holomorphic structure on $E$ and $\nabla$ is tautologically compatible with it.
\end{rmk}
An historical-interesting set of examples of Yang--Mills connections is provided by flat connections, that is by those connections $\nabla$ such that $\nabla^2=0$. Notice that such connections are in $1-1$ correspondence with local systems on $X$. 

As we will shortly see, the next definition provides a set of examples of Yang--Mills connections that is slighter bigger than flat connections.

\begin{defn}[\protect{\cite{Verbitsky:JAG1996,Kaledin.Verbitsky98}}]\label{defn:hyper-holomorphic}
Suppose that $X=(M,I)$ is an irreducible holomorphic symplectic manifold and $(M,g,I,J,K)$ is the hyper-K\"ahler structure on $X$ associated to a K\"ahler class $\omega$. 
\begin{enumerate}
\item Let $E$ be a complex vector bundle on $X$ and $\nabla$ a connection on $E$. Then $\nabla$ is called \emph{autodual} if the curvature $\nabla^2\in A^2(End(E))$ is $\SU(2)$-invariant.
\item Let $(E,\debar_E)$ be a holomorphic vector bundle. A hermitian metric $h$ on $E$ is called \emph{hyper-holomorphic} if the Chern connection is autodual. In this case, we call the Chern connection hyper-holomorphic.
\end{enumerate}
\end{defn}
\begin{rmk}
The definition of autodual connection first appeared in \cite{Kaledin.Verbitsky98}, while the definition of hyper-holomorphic connection first appered in \cite{Verbitsky:JAG1996}. The main difference between the two is the fact that the latter is hermitian, while the former is not in general. Our formality result (see Theorem~\ref{thm:End are DGMS}) relies heavily on the metric aspects of the connection; on the other hand, our discussion on deformations in Section~\ref{section:def of hyper-hol connections} is not bounded to the metric and relates the existence of a hyper-holomorphic connection to the existence of (many) autodual connections.
\end{rmk}
\begin{rmk}
We want to stress that the notion of autoduality is associated to the choice of a hyper-K\"ahler structure, that is to the choice of a K\"ahler class $\omega$, and so one should talk of $\omega$-autoduality (and $\omega$-hyper-holomorphicity). Nevertheless, we will always work with a fixed K\"ahler structure, therefore we can safely drop the reference to $\omega$ from the notation.
\end{rmk}
\begin{example}\label{example:T}
The first non-trivial example of vector bundle having a hyper-holomorphic connection is the holomorphic tangent bundle $\Theta_X$. In fact, let $(M,g,I,J,K)$ be the hyper-K\"ahler structure associated to the fixed K\"ahler class $\omega$ on $X$. Then $\Theta_X$ comes with a natural hermitian form induced by $g$. If $\nabla_X$ is the associated Chern connection, then we need to show that $\nabla_X^2$ is $\SU(2)$-invariant. By Proposition~\ref{prop:action of SU}, this is equivalent to show that $\nabla^2_X$ is of type $(1,1)$ with respect to any induced complex structure $L$. On the other hand, the holomorphic tangent bundle $\Theta_{(M,g,L)}$ has a hermitian structure induced by $g$ as well and its curvature (which coincides with $\nabla_X^2$) is of type $(1,1)$ with respect to $L$. 
\end{example}
\begin{lemma}[\protect{\cite[Proposition~3.9]{Kaledin.Verbitsky98}}]
The following holds.
\begin{enumerate}
\item An autodual connection is a Yang--Mills connection. 
\item A hyper-holomorphic hermitian metric is a Yang--Mills hermitian metric.
\end{enumerate}
\end{lemma}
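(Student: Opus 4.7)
Both assertions reduce to extracting the Yang--Mills conditions from $\SU(2)$-invariance of the curvature; my plan is to treat (1) in detail, after which (2) is immediate. Let $\nabla$ be autodual, so that $\nabla^2\in A^2(End(E))$ is $\SU(2)$-invariant. Since $\SU(2)$ acts only on the form factor (trivially on $End(E)$), in any local trivialization the matrix entries of $\nabla^2$ are $\SU(2)$-invariant scalar $2$-forms; applying Proposition~\ref{prop:action of SU}(1) entry-wise shows that $\nabla^2$ is of type $(1,1)$ with respect to $I$, settling the first half of the Yang--Mills condition.

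For the vanishing $\Lambda_\omega\nabla^2 = 0$, the key observation is that $\omega_I, \omega_J, \omega_K$ span a three-dimensional, irreducible, non-trivial $\SU(2)$-subrepresentation of $\Lambda^2 T^{\ast}M$. Consequently, in the pointwise $g$-orthogonal isotypic decomposition of $\Lambda^2 T^{\ast}_{x}M$, every $\SU(2)$-invariant element is orthogonal to $\omega|_{x}=\omega_I|_x$. Since for a $(1,1)$-form $\alpha$ on a K\"ahler manifold the pointwise inner product $\langle\alpha,\omega\rangle_g$ agrees (up to a universal positive constant) with $\Lambda_\omega\alpha$, this orthogonality yields $\Lambda_\omega\nabla^2=0$ entry-by-entry, hence globally. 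Combined with the $(1,1)$-condition, $\nabla$ is Yang--Mills, proving (1). Part (2) is then automatic: the Chern connection of a hyper-holomorphic hermitian metric is autodual by definition, hence Yang--Mills by (1), so $h$ is a Yang--Mills metric.

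The step requiring some care is the representation-theoretic claim about $\SU(2)$-invariant $2$-forms. A clean way to justify it is via the decomposition $T_M\otimes\CC\cong E\otimes H$ coming from the $\Sp(n)\times\Sp(1)$-structure induced by the hyper-K\"ahler data, where $E$ and $H$ are the standard representations of $\Sp(n)$ and $\Sp(1)=\SU(2)$ respectively, which yields
\[ \Lambda^2 T^{\ast}M\otimes\CC \cong (\Sym^2 E^{\ast}\otimes\Lambda^2 H^{\ast})\oplus(\Lambda^2 E^{\ast}\otimes\Sym^2 H^{\ast})\, , \]
in which $\Lambda^2 H^{\ast}\cong\CC$ is $\SU(2)$-trivial and $\Sym^2 H^{\ast}$ is the $3$-dimensional irrep. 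The $\SU(2)$-invariants are therefore exactly $\Sym^2 E^{\ast}\otimes\Lambda^2 H^{\ast}$, while $\omega_I,\omega_J,\omega_K$ span $\CC\omega_E\otimes\Sym^2 H^{\ast}$ inside the second summand ($\omega_E$ being the $\Sp(n)$-invariant symplectic form on $E$). Orthogonality across distinct $\SU(2)$-isotypic components is then immediate, and $\Lambda_\omega\nabla^2=0$ follows.
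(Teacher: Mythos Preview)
Your proof is correct and follows the same two-step structure as the paper's own argument: first extract the $(1,1)$-type from Proposition~\ref{prop:action of SU}, then show $\Lambda_\omega\nabla^2=0$ from $\SU(2)$-invariance. The only difference is that where the paper simply cites \cite[Lemma~2.1]{Verbitsky:JAG1996} for the primitivity of $\SU(2)$-invariant $2$-forms, you unpack that lemma via the $E\otimes H$ (Salamon) decomposition of $T_M\otimes\CC$, which makes the orthogonality between the invariant piece $\Sym^2E^{\ast}\otimes\Lambda^2H^{\ast}$ and the span of the K\"ahler forms in $\Lambda^2E^{\ast}\otimes\Sym^2H^{\ast}$ transparent. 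This buys you a self-contained argument at the cost of introducing the $\Sp(n)\times\Sp(1)$ formalism, which the paper otherwise avoids; but the mathematical content is the same.
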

\begin{proof}
First of all, by Proposition~\ref{prop:action of SU} we have that the curvature $\nabla^2$ of an autodual connection $\nabla$ is of type $(1,1)$ with respect to any induced complex structure, in particular for $I$. Furthermore, by \cite[Lemma~2.1]{Verbitsky:JAG1996}, any $\SU(2)$-invariant $2$-forms $\alpha$ satisfies $\Lambda_\omega \alpha=0$. 

The last statement follows similarly (see \cite[Theorem~2.3]{Verbitsky:JAG1996}).
\end{proof}

By Chern--Simon theory, the Chern classes of a holomorphic vector bundle $E$ are computed in terms of the curvature of a connection $\nabla$ on $E$. In particular, if $\nabla$ is autodual, then $c_k(E)$ is $\SU(2)$-invariant for every $k$.
The following result is a partial converse.

\begin{prop}[\protect{\cite{Verbitsky:JAG1996}}]
Let $X$ be an irreducible holomorphic symplectic manifold with fixed K\"ahler form $\omega$, and let $(E,\debar_E)$ be a holomorphic vector bundle on $E$ that is indecomposable. 
If there exists a hyper-holomorphic connection on $E$, then $E$ is slope stable.

Vice versa, if $E$ is slope stable and $c_1(E)$ and $c_2(E)$ are $\SU(2)$-invariant, then there exists a connection on $E$ that is hyper-holomorphic.
\end{prop}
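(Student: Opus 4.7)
For the first direction, I would argue by Kobayashi--L\"ubke theory. By the lemma preceding the statement, a hyper-holomorphic hermitian metric gives in particular a hermitian Yang--Mills Chern connection $\nabla$ with $\Lambda_\omega \nabla^2 = 0$, so $\mu_\omega(E)=0$. The standard fact that a holomorphic vector bundle admitting a hermitian Yang--Mills connection is $\omega$-polystable then forces $E$ to be a direct sum of stable bundles, each of slope $0$; since $E$ is indecomposable the decomposition has a single summand and $E$ is itself stable.

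For the converse, the plan is to apply the Donaldson--Uhlenbeck--Yau theorem. Since $(E,\debar_E)$ is slope stable with respect to $\omega$, it admits a hermitian metric $h$, unique up to positive scalar, whose Chern connection $\nabla=\nabla^{1,0}+\debar_E$ is hermitian Yang--Mills: its curvature $R=\nabla^2\in A^2(End(E))$ is of type $(1,1)$ with respect to $I$ and satisfies $\Lambda_\omega R = c\,\operatorname{Id}_E$ for a scalar $c$ proportional to $\mu_\omega(E)$. The problem therefore reduces to showing that $R$ is $\SU(2)$-invariant with respect to the action of Proposition~\ref{prop:action of SU}, since then $\nabla$ is hyper-holomorphic by Definition~\ref{defn:hyper-holomorphic}.

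The main obstacle, and the heart of the proof, is precisely this promotion of $\SU(2)$-invariance from the cohomology classes $c_1(E),c_2(E)$ to the curvature form $R$. My strategy would be to decompose $R=R^{\SU}+R^{\perp}$ into its $\SU(2)$-invariant part and the $L^2$-orthogonal complement inside $A^2(End(E))$, and to exploit a Bogomolov--L\"ubke type identity. Via Chern--Weil, the characteristic combination $\bigl(2r\,c_2(E)-(r-1)\,c_1(E)^2\bigr)\cup[\omega]^{2n-2}$ equals, up to positive constants, an $L^2$-integral of the trace-free part of $R$; the key hyper-K\"ahler observation is that under the primitive decomposition relative to any induced complex structure, the contributions of $R^{\SU}$ and $R^{\perp}$ to this integral come with opposite signs, since by Proposition~\ref{prop:action of SU}(1) $R^{\SU}$ is of type $(1,1)$ with respect to every induced complex structure, whereas $R^{\perp}$ is by construction $L^2$-orthogonal to the space of such $(1,1)$-pieces. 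Averaging the Chern--Weil identity over the compact group $\SU(2)$ and invoking the hypothesis that $c_1(E)$ and $c_2(E)$ are $\SU(2)$-invariant then pins down both sides and forces $R^{\perp}=0$. A parallel, more conceptual route is to use the uniqueness clause of DUY directly: since $\SU(2)$ permutes the hyper-K\"ahler induced complex structures while preserving both slope stability and the $\SU(2)$-invariance of Chern classes, the family of DUY metrics produced for each rotated holomorphic structure fits into a single $\SU(2)$-equivariant datum, whose invariance translates into $\SU(2)$-invariance of $R$.
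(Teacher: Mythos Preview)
Your first direction matches the paper's argument exactly: hyper-holomorphic implies Yang--Mills, and an indecomposable bundle admitting a hermitian Yang--Mills metric is slope stable by the easy direction of Donaldson--Uhlenbeck--Yau.

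For the converse the paper simply invokes \cite[Theorem~2.5]{Verbitsky:JAG1996}, so what you wrote is an attempt to sketch Verbitsky's own argument rather than the paper's. Your first strategy --- decompose $R=R^{\SU}+R^{\perp}$ and use a Bogomolov--L\"ubke type Chern--Weil identity in which the two pieces contribute with opposite signs --- is indeed the mechanism behind Verbitsky's proof, and your outline is broadly correct, though the precise sign analysis (showing that the non-invariant piece of a primitive $(1,1)$-form contributes with a definite sign to $\int\Tr(R\wedge R)\wedge\omega^{2n-2}$) is the delicate point and deserves more than a sentence.

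Your second ``parallel route'' via uniqueness of DUY, however, is circular. To run DUY for an induced complex structure $L\neq I$ you would need, first, that $E$ carries a holomorphic structure with respect to $L$, and second, that this holomorphic bundle is slope stable with respect to $\omega_L$. The first is precisely the content of the conclusion (a hyper-holomorphic connection is one whose curvature is $(1,1)$ for \emph{every} induced $L$, which is what endows $E$ with holomorphic structures along the twistor line). The second is also not automatic: Verbitsky proves separately that under $\SU(2)$-invariance of $c_1,c_2$ stability is independent of the induced complex structure, but that result itself uses the curvature estimates you are trying to avoid. So the uniqueness argument cannot replace the Chern--Weil computation; drop it and keep the first strategy.
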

\begin{proof}
By the Donaldson--Uhlenbeck--Yau theorem, an indecomposable holomorphic vector bundle is slope stable if and only if it admits a unique Yang--Mills metric, and so a unique Chern connection of Yang--Mills type. Since hyper-holomorphic connections are Yang--Mills, the first statement follows. 

The last statement follows from \cite[Theorem~2.5]{Verbitsky:JAG1996}, where it is proved that, under the hypothesis of $\SU(2)$-invariance of the first two Chern classes, the Chern connection produced by the Donaldson--Uhlenbeck--Yau theorem is hyper-holomorphic.
\end{proof}

We conclude this section with the following characterisation of autodual connections. Recall that if $X$ is an irreducible holomorphic symplectic manifold and $\omega$ a K\"ahler class, then there is a twistor family $\tau\colon\cX\to\lP^1$ associated to the hyper-K\"ahler structure corresponding to $\omega$. Let $q\colon\cX\to X$ be the non-holomorphic projection.
\begin{lemma}[\protect{\cite[Lemma~5.1]{Kaledin.Verbitsky98}}]
Let $E$ be a holomorphic vector bundle and $\nabla$ an autodual connection on it. Then the connection $q^{\ast}\nabla$ on the vector bundle $q^{\ast}E$ has curvature of type $(1,1)$. In particular, $q^{\ast}\nabla^{0,1}$ is a holomorphic structure on $q^{\ast}E$.
\end{lemma}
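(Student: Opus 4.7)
The plan is to reduce the lemma to Verbitsky's Proposition~\ref{prop:action of SU}(1): once we know that $(q^*\nabla)^2 = q^*R$ is of pure type $(1,1)$ on $\cX$, the ``in particular'' statement follows immediately from Koszul--Malgrange applied to $(q^*\nabla)^{0,1}$, whose square is the $(0,2)$-component of the curvature.

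To set things up, I would use the diffeomorphism $\cX\cong M\times S^2$ and the characterisation of the twistor complex structure $J_{\cX}$ imposed by the holomorphicity of $\tau$: at a point $(m,L)$ with $L = aI+bJ+cK \in S^2$, the almost complex structure $J_{\cX}$ acts as $L$ on the fibre direction $T_m M$ and as the standard $I_{S^2}$ on $T_L S^2 \cong T_L\lP^1$. The smooth projection $q\colon \cX \to X$ is $(m,L)\mapsto m$, so its differential is the projection $T_m M \oplus T_L S^2 \to T_m M$. By naturality of curvature under pullback, for $(v_i,w_i)\in T_m M \oplus T_L S^2$ I obtain
\[ (q^*R)_{(m,L)}\bigl((v_1,w_1),(v_2,w_2)\bigr) \;=\; R_m(v_1,v_2). \]
Substituting $J_{\cX}(v_i,w_i) = (Lv_i,\,I_{S^2}w_i)$ in each slot yields $R_m(Lv_1,Lv_2)$. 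Therefore $q^*R$ is of type $(1,1)$ at $(m,L)$ if and only if $R_m(Lv_1,Lv_2) = R_m(v_1,v_2)$, which is precisely the statement that $R$ is of type $(1,1)$ at $m$ with respect to the induced complex structure $L$.

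Now the assumption enters: since $\nabla$ is autodual, $R$ is $\SU(2)$-invariant, and Proposition~\ref{prop:action of SU}(1) gives that $R$ is of type $(1,1)$ with respect to \emph{every} induced complex structure, in particular the $L$ parametrising the chosen point $(m,L) \in \cX$. Hence $q^*R$ is of type $(1,1)$ everywhere on $\cX$, proving the first assertion. The vanishing of the $(0,2)$-component of $(q^*\nabla)^2$ translates into $((q^*\nabla)^{0,1})^2 = 0$, so Koszul--Malgrange produces a holomorphic structure on $q^*E$. There is no conceptual obstacle; the only things to track carefully are the sign convention that identifies $(1,1)$-forms via $\alpha(JX,JY)=\alpha(X,Y)$ (whereas $(2,0)\oplus(0,2)$ flips the sign), and the fact that $q$ is only smooth, so only $dq$ enters the pullback and no compatibility between complex structures is needed.
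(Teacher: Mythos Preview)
Your proof is correct. The paper does not give its own proof of this lemma; it is stated with a reference to \cite[Lemma~5.1]{Kaledin.Verbitsky98} and no further argument, so there is nothing to compare against beyond noting that your fibrewise reduction via the explicit description of $J_{\cX}$ on $T_mM\oplus T_LS^2$ together with Proposition~\ref{prop:action of SU}(1) is exactly the standard argument one would expect from the cited source.
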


In other words, the lemma is saying that holomorphic vector bundles on $X$ admitting an autodual connection are those for which the pullback $q^{\ast}E$ is still holomorphic. In particular $q^{\ast}E$ can be tought of as a family of holomorphic vector bundles on the twistor family, so that autodual vector bundles are those that ``deform" along the twistor line.



\section{The quaternionic Dolbeault complex}\label{section:def of hyper-hol connections}

\subsection{A motivating example: flat connections}\label{subsection:flat connections}
We start with the following examples which we hope will serve as a motivation for what we will say next. For a more detailed account we refer to~\cite{GM88}.

Let us consider a compact K\"ahler manifold $X$ and a complex vector bundle $E$ endowed with a flat connection $\nabla$. Decomposing $\nabla$ in its $(1,0)$ and $(0,1)$ parts, according to the complex structure of $X$, we see that the flatness condition can be written as
\[ (\nabla^{0,1})^2=0\, ,\quad (\nabla^{1,0})^2=0\,\quad \mbox{ and }\quad [\nabla^{0,1},\nabla^{1,0}]=0. \]
In particular, if we think of $\nabla^{0,1}$ as a differential on the graded vector space $A^\ast(E)$, i.e. it is a holomorphic structure on $E$, then the last two conditions can be compactly stated by saying that $\nabla^{1,0}$ is a strong solution (see Definition~\ref{def:strongMC}) to the Maurer--Cartan equation in the DG Lie algebra
\[ \left(\End_{\CC}^{\ast}(A^{\ast}(E)),d=[\nabla^{0,1},-],[-,-]\right). \]

Moreover, since $\nabla$ is flat, there always exists a hermitian metric on $E$ such that $\nabla$ is the Chern connection of this metric. Using this observation, Goldman and Millson notice that the strong $\nabla^{0,1}\,\nabla^{1,0}$-lemma holds (see \cite[Proposition~7.3]{GM88}). The proof of this result is not explicitly stated in loc.\ cit.\ but can be easily filled: using the parallel hermitian metric, one can write down the usual Nakano--Kodaira identities and the Hodge decomposition, so that the claim follows as in the classical case.

From our point of view, the triple
\[ \left(A^\ast(E),\nabla^{0,1},\nabla^{1,0}\right) \]
is an example of a DGMS vector space, cf. Definition~\ref{def.DGMS}. 
One of the main results of \cite{GM88} is the statement that the infinitesimal deformations of $\nabla$ are controlled by the DG Lie algebra 
\[ \left(A^\ast(End(E)),[\nabla,-],[-,-]\right), \]
where $[-,-]$ as usual denote the graded commutator of the natural associative product (see \cite[Proposition~6.6]{GM88}). Now, by Proposition~\ref{prop.DGMStrick} and Theorem~\ref{thm.DGMS}, it follows then that the associative DG algebra $\left(A^\ast(End(E)),[\nabla,-]\right)$ is formal (in particualr it is also Lie formal) and then the obstructions to deform $\nabla$ are quadratic (see \cite[Theorem~1]{GM88} and its proof in \cite[Section~7]{GM88}).

\subsection{Autodual connections and quaternionic Dolbeault complex}\label{section:quaternionic}
Recall that the hyper-K\"ahler structure on $X$ transfers to a $\SU(2)$-action on the de Rham complex $A^{\ast}(E)$. In particular we see $A^{\ast}(E)$ as a complex $\SU(2)$-representation, which is equivalent to being a $\ssl(2)$-representation; indeed $\su(2)_{\CC}=\ssl(2)$ and that $\SU(2)$ is simply connected, so that the set of $\SU(2)$-representations is in bijection with the set of $\su(2)$-representations. We stress the fact that the action of $\SU(2)$ on $A^{\ast}(E)$ is trivial on $E$. 

The action of $\ssl(2)$ on $A^{\ast}(E)$ is easy to write down locally in terms of the actions of the generators corresponding to the three K\"ahler structures $I$, $J$ and $K$ given by the hyper-K\"ahler structure. In particular any irreducible subrepresentation of $A^k(E)$ has weight smaller or equal to $k$ and any irreducible subrepresentation of $A^{0,k}(E)$ has weight $k$. 

One can define an ideal $\mathfrak{I}=\bigoplus_k\mathfrak{I}^k$, where $\mathfrak{I}^k$ is generated by the irreducible subrepresentations of $A^k(E)$ of weight strictly smaller that $k$. We refer to \cite{Verbitsky:Compositio2007} for the details on the definition of $\mathfrak{I}$ and the fact that it is well-defined and well-behaved.

We shall denote the quotient by
\[
A_+^{\ast}(E)\defeq\frac{A^{\ast}(E)}{\mathfrak{I}}.
\]

Notice that the Dolbeault complex $A^{0,\ast}(E)$ canonically embeds as a graded vector space in $A_+^{\ast}(E)$, since the quotient inherits the bi-graded structure $A_+^{k}(E)=\bigoplus_{p+q=k}A_+^{p,q}(E)$ and we have already remarked that any subrepresentation of $A^{0,k}(E)$ has weight exactly $k$.

Any connection $\nabla$ on $E$ induces a degree $1$ operator $\nabla_+$ on the graded algebra $A_+^\ast(E)$.

\begin{lemma}\label{lemma:autodual are flat}
$\nabla$ is autodual if and only if $\,\nabla_+$ is flat, i.e. $\,\nabla_+^2=0$.
\begin{proof}
First of all, by Proposition~\ref{prop:action of SU}, if $\nabla$ is autodual, then the curvature $\nabla^2$ is $\SU(2)$-invariant. In particular, it defines a $1$-dimensional subrepresentation of weight $0$ and hence it must belong to the ideal $\mathfrak{I}$. It follows that $\nabla_+^2=0$.

For the converse, notice that $\nabla^2$ is a $2$-form, hence the representation generated by it must have weight either $0$ or $2$. Since $\nabla_+^2=0$, it cannot have weight $2$ and therefore it must be $\SU(2)$-invariant.
\end{proof}
\end{lemma}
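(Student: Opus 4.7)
The plan is to reduce the claim to an algebraic fact about which $\SU(2)$-weights can occur in $A^2$. First I would note that the curvature $\nabla^2$ is $A^0(E)$-linear, so it represents a global section of $A^2(\End(E))$; by construction the operator $\nabla_+^2$ on $A_+^\ast(E)$ is then exactly (left) multiplication by the image of $\nabla^2$ under the projection $A^2(\End(E))\to A_+^2(\End(E))$. Using that $\SU(2)$ acts trivially on the $\End(E)$-factor, the ideal in degree $2$ for $\End(E)$-valued forms is just $\mathfrak{I}^2\otimes_{A^0}A^0(\End(E))$, so $\nabla_+^2=0$ if and only if $\nabla^2$ lies in $\mathfrak{I}^2$. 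The lemma thus reduces to proving that $\nabla^2\in\mathfrak{I}^2$ if and only if $\nabla^2$ is $\SU(2)$-invariant.

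The ``if'' direction is immediate from the definition of $\mathfrak{I}$: an $\SU(2)$-invariant element is a sum of weight-$0$ irreducible subrepresentations, and since $0<2$ it lies in $\mathfrak{I}^2$. For the ``only if'' direction the key input is the weight decomposition of $A^2$. By Proposition~\ref{prop:action of SU}(1), there are no $\SU(2)$-invariant $1$-forms (invariants live only in even degree), so every irreducible subrepresentation of $A^1$ has weight exactly $1$; since $A^2\subseteq\Lambda^2 A^1$ and the $\SU(2)$-decomposition $V_1\otimes V_1\cong V_0\oplus V_2$ forces every irreducible subrepresentation of $A^2$ to have weight $0$ or $2$, in degree $2$ the ideal $\mathfrak{I}^2$ (defined as the sum of subrepresentations of weight strictly less than $2$) coincides with the $\SU(2)$-invariant part. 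Therefore $\nabla^2\in\mathfrak{I}^2$ forces $\nabla^2$ to have no weight-$2$ component, hence to be $\SU(2)$-invariant; this is by definition autoduality of $\nabla$.

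The only step that is not entirely formal is the weight analysis of $A^2$, but this is standard $\SU(2)$-representation theory once Proposition~\ref{prop:action of SU} has ruled out $\SU(2)$-invariants in $A^1$. The rest of the argument is bookkeeping in the $\SU(2)$-decomposition of the de Rham algebra, so no serious obstacle is expected.
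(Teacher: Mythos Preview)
Your argument is correct and follows essentially the same route as the paper's proof: both reduce the statement to the fact that in degree~$2$ the ideal $\mathfrak{I}^2$ coincides with the $\SU(2)$-invariant part of $A^2(\End(E))$, using that the only weights occurring in $A^2$ are $0$ and $2$. The paper simply asserts this weight dichotomy, whereas you spell it out via the decomposition $V_1\otimes V_1\cong V_0\oplus V_2$ after observing that $A^1$ has no invariants; you also make explicit the preliminary reduction $\nabla_+^2=0 \Leftrightarrow \nabla^2\in\mathfrak{I}^2$, which the paper leaves implicit.
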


Let us define $\qA^{\ast}(E)=\bigoplus_k\qA^{k}(E)$, where 
\[ 
\qA^{k}(E)=\Sym^kV\otimes_{\CC}A^{0,k}(E),
\]
where $V$ is the irreducible $2$-dimensional $\ssl(2)$-representation.
As a vector space, $\qA^k(E)$ is nothing but the direct sum of $k+1$ copies of $A^{0,k}(E)$. Moreover, the choice of a basis $\{x,y\}$ for $V$ provides a bi-grading of $\qA^{\ast}(E)$
\[ \qA^k(E)=\bigoplus_{p+q=k}\qA^{p,q}(E) =\bigoplus_{p+q=k} x^py^q A^{0,p+q}(E) \; , \]
where we are thinking of $x$ and $y$ as formal central variables.

Recall that if $L$ is any induced complex structure on $(M,g,I,J,K)$, then $L$ acts on $A^{\ast}$ in a multiplicative way, i.e.\ $L(\alpha\wedge\beta)=L(\alpha)\wedge L(\beta)$. Our next result is due to Verbitsky and relates the two constructions introduced above.

\begin{lemma}\label{lemma:Phi}
There is a $\SU(2)$-equivariant isomorphism of bi-graded vector spaces
\[
\varphi\colon \qA^{\ast,\ast}(E)\longrightarrow A_+^{\ast,\ast}(E) \; .
\]
\begin{proof}
First notice that $A_+^{k}(E)=\bigoplus\limits_{p+q=k}A_+^{p,q}(E)$, and by~\cite[Proposition 2.7]{Verbitsky:Compositio2007} we have $ A_+^{p,q}\cong A^{0,p+q}$, so that in particular $A_+^{k}(E)$ is isomorphic to $k+1$ copies of $A^{0,p+q}(E)$ as a vector space. Now, fix a basis $\{x,y\}$ of the irreducible $2$-dimensional $\ssl(2)$-representation $V$, and consider the isomorphism
\[ \varphi^1\colon \qA^1(E)= V\otimes A^{0,1}(E) \to A_+^1(E) \]
defined by
\[ x\otimes(\eta\otimes e)\mapsto J(\eta)\otimes e \qquad \mbox{ and } \qquad y\otimes\eta\mapsto \eta\otimes e \; . \]
Notice that $\varphi^1$ extends, thanks to the algebra structure and the multiplicativity action of $\SU(2)$, to a map
\[ \varphi^k\colon \qA^k(E)=\Sym^k(V)\otimes A^{0,k}(E) \longrightarrow A^k_+(E) \; , \]
hence providing a morphism of graded algebras $\varphi\colon \qA^{\ast}(E) \to A_+^{\ast}(E)$,
which is the required $\SU(2)$-equivariant isomorphism, cf.\ \cite[Proposition 2.9]{Verbitsky:Compositio2007}. The inverse $\varphi^{-1}$ is defined by
\[ [\alpha\wedge\beta]\otimes e \mapsto x^{\deg(\alpha)}y^{\deg(\beta)}\left(J(\alpha)\wedge\beta\right)\otimes e\; , \]
for every homogeneous class $[\alpha\wedge\beta]\otimes e\in A_+^{\deg(\alpha),\deg(\beta)}(E)$. Here we are again tacitely using \cite[Proposition 2.7]{Verbitsky:Compositio2007} to think the class of a $(p,q)$-form in $A^{\ast}_+(E)$ as a $(0,p+q)$-form. 
\end{proof}
\end{lemma}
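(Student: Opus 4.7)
The plan is to exploit the fact that, by \cite[Proposition~2.7]{Verbitsky:Compositio2007}, the quotient $A_+^{p,q}(E)$ is (non-canonically) isomorphic to $A^{0,p+q}(E)$, so that $A_+^k(E)$ is a direct sum of $k+1$ copies of $A^{0,k}(E)$; thus the source and target have the same underlying vector space and the same bi-graded dimensions, and all the content is in producing a natural $\SU(2)$-equivariant identification compatible with the algebra structure. Since $\SU(2)$ acts trivially on $E$, the action on both sides is entirely controlled by the action on differential forms.

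I would begin by constructing $\varphi$ in degree one. Observe that $V\otimes A^{0,1}(E)$ is precisely the source in degree one, and the induced complex structure $J$ provides a real-linear isomorphism $A^{0,1}(E)\xrightarrow{\sim} A^{1,0}(E)$ when working with respect to $I$. Defining
\[
\varphi^1\bigl(x\otimes \eta\otimes e\bigr) = [J(\eta)\otimes e],\qquad
\varphi^1\bigl(y\otimes \eta\otimes e\bigr) = [\eta\otimes e],
\]
one checks $\SU(2)$-equivariance by noting that $\{J(\eta),\eta\}$ span a copy of the weight-$1$ representation $V$ inside $A^1(E)$, and this copy survives in the quotient $A_+^1(E)$ precisely because $\mathfrak{I}$ is generated by subrepresentations of strictly smaller weight. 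Then I would extend $\varphi^1$ multiplicatively: since $A_+^\ast(E)$ is an algebra quotient of $A^\ast(E)$ and $\SU(2)$ acts by algebra automorphisms (the action on forms is derivation-compatible by construction), the algebra map
\[
\varphi\colon \Sym^\ast V\otimes A^{0,\ast}(E)\longrightarrow A^\ast_+(E)
\]
induced by $\varphi^1$ on generators is automatically $\SU(2)$-equivariant and respects the bi-grading, sending $\qA^{p,q}(E)=x^p y^q\otimes A^{0,p+q}(E)$ into $A_+^{p,q}(E)$.

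Finally, to see that $\varphi$ is an isomorphism in each bi-degree, I would exhibit the inverse stated in the lemma, namely
\[
\varphi^{-1}\colon [\alpha\wedge\beta]\otimes e \longmapsto x^{\deg(\alpha)}y^{\deg(\beta)}\otimes \bigl(J(\alpha)\wedge\beta\bigr)\otimes e,
\]
for homogeneous $(p,0)$-forms $\alpha$ and $(0,q)$-forms $\beta$; combined with the dimension count $\dim A_+^{p,q}(E) = \dim A^{0,p+q}(E) = \dim\qA^{p,q}(E)$, this yields bijectivity in each bi-degree. The main obstacle, and the place where the structural results of \cite{Verbitsky:Compositio2007} are invoked in an essential way, is the well-definedness of $\varphi^{-1}$ modulo $\mathfrak{I}$: one must verify that the assignment does not depend on the chosen Hodge factorisation $\alpha\wedge\beta$ of a class in $A_+^{p,q}(E)$ and genuinely descends to the quotient. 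This amounts to controlling how the irreducible $\SU(2)$-subrepresentations of $A^{p,q}(E)$ of strictly smaller weight interact with the $J$-twist appearing in the formula, which is exactly the content of \cite[Propositions~2.7 and~2.9]{Verbitsky:Compositio2007}.
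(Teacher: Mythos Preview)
Your proposal is correct and follows essentially the same approach as the paper's own proof: define $\varphi$ in degree one via the $J$-twist on the $x$-component and the identity on the $y$-component, extend multiplicatively using the algebra structure and the multiplicativity of the $\SU(2)$-action, invoke \cite[Propositions~2.7 and~2.9]{Verbitsky:Compositio2007} for the dimension count and the structural control, and exhibit the same inverse formula. If anything, you supply slightly more justification than the paper does for the $\SU(2)$-equivariance of $\varphi^1$ and for the well-definedness of $\varphi^{-1}$ modulo $\mathfrak{I}$.
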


\begin{rmk}\label{rmk.qDdoublecomplex}
When $E$ is endowed with an autodual connection, the explicit isomorphism $\varphi$ of Lemma~\ref{lemma:Phi} together with Lemma~\ref{lemma:autodual are flat} allow us to induce two differentials on $\qA^{\ast,\ast}(E)$. In fact for every $p,q\geq0$:
\begin{enumerate}
    \item $\nabla_+^{1,0}$ is naturally taken to the differential operator
    \[ x\nabla^{0,1}_J\colon x^py^q\,A^{0,p+q}_I(E) \xrightarrow{J\otimes\id}x^py^qA^{p+q,0}_I(E)\xrightarrow{x\nabla^{1,0}} x^{p+1}y^qA_I^{p+q+1,0}(E)\xrightarrow{J^{-1}\otimes\id} x^{p+1}y^qA_I^{0,p+q+1}(E) \]
    so that
    \[ x\nabla_J^{0,1} \defeq  (J^{-1}\otimes\id)\circ(x\,\nabla^{1,0})\circ(J\otimes\id) = \varphi^{-1}\circ\nabla_+^{1,0}\circ \varphi \; , \]
    \item $\nabla_+^{1,0}$ is naturally taken to the differential operator
    \[ y\nabla^{0,1}\colon x^py^qpA^{0,p+q}(E)\longrightarrow x^py^{q+1}A^{0,p+q+1}(E) \]
    so that
    \[ y\nabla^{0,1}= \varphi^{-1}\circ\nabla_+^{0,1}\circ \varphi \; . \]
\end{enumerate}
Notice that by Lemma~\ref{lemma:autodual are flat}, assuming the connection to be autodual is equivalent to require that $\qA^{\ast,\ast}(E)$ is a double complex, i.e.
\[ (\nabla^{0,1})^2\equiv 0 \qquad (\nabla_J^{0,1})^2\equiv 0 \qquad \nabla^{0,1}\nabla_J^{0,1}+\nabla_J^{0,1}\nabla^{0,1}\equiv 0 \; . \]
\end{rmk}

\begin{defn}[quaternionic Dolbeault complex]\label{def.qD}
Let $(M,g,I,J,K)$ be a hyper-K\"ahler manifold. Let $E$ be a complex vector bundle on $X=(M,I)$ equipped with an autodual connection $\nabla=\nabla^{1,0}+\nabla^{0,1}$. 
We shall call $(\qA^{\ast}(E)\,,\,x\nabla^{0,1}_J+y\nabla^{0,1})$ the \emph{quaternionic Dolbeault complex}.
\end{defn}

\begin{rmk}
Notice that the quaternionic Dolbeault complex of Definition~\ref{def.qD} can be also seen as the total complex of the double complex
\[ \qA^{\ast,\ast}(E) =  \left(\bigoplus_{k\in\mathbb{N}}\bigoplus_{p+q=k}\qA^{p,q}(E)\,,\,x\nabla^{0,1}_J\, ,\, y\nabla^{0,1}\,\right) \; . \]
\end{rmk}

For reference purposes we explicitly state the next result, which essentially says that the isomorphism of Lemma~\ref{lemma:Phi} respects the double complex structure.

\begin{prop}\label{prop:Phi}
Suppose that $\nabla$ is an autodual connection on $E$.
There is a $\SU(2)$-equivariant isomorphism of double complexes
\[
\varphi\colon \left(\qA^{\ast,\ast}(E)\,,\,x\nabla^{0,1}_J\,,\,y\nabla^{0,1}\right)\longrightarrow \left(A_+^{\ast,\ast}(E)\,,\,\nabla_+^{0,1}\,,\,\nabla_+^{0,1}\right) \; .
\]
\begin{proof}
Immediate from Lemma~\ref{lemma:autodual are flat}, Lemma~\ref{lemma:Phi} and Remark~\ref{rmk.qDdoublecomplex}.
\end{proof}
\end{prop}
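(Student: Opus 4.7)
The plan is to assemble the proposition from three ingredients already established in the preceding material: the bi-graded vector space isomorphism of Lemma~\ref{lemma:Phi}, the explicit formulas for the transported differentials in Remark~\ref{rmk.qDdoublecomplex}, and the flatness of $\nabla_+$ coming from Lemma~\ref{lemma:autodual are flat}. Since the map $\varphi$ has already been constructed, what is left is to upgrade it from an isomorphism of bi-graded $\SU(2)$-representations to an isomorphism of double complexes.

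First I would recall that $\varphi\colon \qA^{\ast,\ast}(E)\to A_+^{\ast,\ast}(E)$ of Lemma~\ref{lemma:Phi} is $\SU(2)$-equivariant and respects the bi-grading, being defined multiplicatively from the degree-one map that intertwines the $\ssl(2)$-generators corresponding to $I$ and $J$. Next, Remark~\ref{rmk.qDdoublecomplex} gives by direct construction the identities
\[ \varphi^{-1}\circ\nabla_+^{1,0}\circ\varphi= x\nabla_J^{0,1} \qquad \mbox{and}\qquad \varphi^{-1}\circ\nabla_+^{0,1}\circ\varphi= y\nabla^{0,1}, \]
so $\varphi$ intertwines the two candidate pairs of differentials and has the correct bi-degree behaviour: $x\nabla_J^{0,1}$ raises the first grading by one, while $y\nabla^{0,1}$ raises the second.

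Finally, since $\nabla$ is autodual, Lemma~\ref{lemma:autodual are flat} gives $\nabla_+^2=0$; splitting this identity by bi-degree on $A_+^{\ast,\ast}(E)$ yields
\[ (\nabla_+^{1,0})^2=0,\qquad (\nabla_+^{0,1})^2=0,\qquad [\nabla_+^{1,0},\nabla_+^{0,1}]=0, \]
so $(A_+^{\ast,\ast}(E),\nabla_+^{1,0},\nabla_+^{0,1})$ is a genuine double complex. Transporting these relations across $\varphi$ via the identities in the previous paragraph shows that $(\qA^{\ast,\ast}(E),x\nabla_J^{0,1},y\nabla^{0,1})$ is also a double complex, and that $\varphi$ is an isomorphism between them.

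There is no substantial obstacle to overcome here, since the statement is essentially a repackaging of Lemma~\ref{lemma:Phi}, Remark~\ref{rmk.qDdoublecomplex}, and Lemma~\ref{lemma:autodual are flat} into a single clean statement about double complexes. The only mild check worth highlighting is that the three relations defining a double complex on one side really do transfer verbatim to the other under $\varphi$, which is automatic because $\varphi$ is an isomorphism of graded vector spaces and conjugation by it is a ring homomorphism on endomorphisms.
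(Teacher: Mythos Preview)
Your proposal is correct and follows essentially the same approach as the paper, whose proof simply cites Lemma~\ref{lemma:autodual are flat}, Lemma~\ref{lemma:Phi}, and Remark~\ref{rmk.qDdoublecomplex} as making the statement immediate. You have merely spelled out in more detail how these three ingredients combine, including the bi-degree splitting of $\nabla_+^2=0$ and the transport of the double-complex relations under conjugation by $\varphi$.
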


\begin{prop}\label{prop:hyper-hol as MC}
If a connection $\nabla$ on $E$ is autodual then
\begin{enumerate}
    \item $\nabla^{0,1}$ is a holomorphic structure, and
    \item $\nabla^{0,1}_J= (J^{-1}\otimes\id)\circ\nabla^{1,0}\circ (J\otimes\id)$ is a strong solution of the Maurer--Cartan equation in the DG Lie algebra $\left(\End_{\CC}^{\ast}(A^{0,\ast}(E)),d=[\nabla^{0,1},-],[-,-]\right)$.
\end{enumerate}
\begin{proof}
The statements are equivalent to the relations
\[ \left(\nabla^{0,1}\right)^2\equiv 0 \qquad \left(\nabla_J^{0,1}\right)^2\equiv 0 \qquad \nabla^{0,1}\nabla_J^{0,1}+\nabla_J^{0,1}\nabla^{0,1}\equiv 0 \]
already outlined in Remark~\ref{rmk.qDdoublecomplex}.
\end{proof}
\end{prop}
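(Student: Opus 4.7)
The plan is to reduce both claims to the single identity $\nabla_+^2=0$, which by Lemma~\ref{lemma:autodual are flat} is equivalent to the autoduality of $\nabla$. The strategy is then to transport this identity from $A_+^{\ast}(E)$ to the quaternionic Dolbeault complex $\qA^{\ast,\ast}(E)$ via the isomorphism $\varphi$ of Lemma~\ref{lemma:Phi} and to extract the three scalar relations
\[
(\nabla^{0,1})^2=0,\qquad (\nabla_J^{0,1})^2=0,\qquad \nabla^{0,1}\nabla_J^{0,1}+\nabla_J^{0,1}\nabla^{0,1}=0,
\]
from which both assertions will follow almost by definition.

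First I would invoke Remark~\ref{rmk.qDdoublecomplex} to rewrite $\varphi^{-1}\circ\nabla_+\circ\varphi = x\nabla_J^{0,1}+y\nabla^{0,1}$ as an operator on $\qA^{\ast,\ast}(E)$. Since $\varphi$ is an isomorphism, $\nabla_+^2=0$ is equivalent to $(x\nabla_J^{0,1}+y\nabla^{0,1})^2=0$. Expanding, and using that $x$ and $y$ are central formal variables while $x\nabla_J^{0,1}$ and $y\nabla^{0,1}$ raise, respectively, the first and second bi-degree of $x^p y^q A^{0,p+q}(E)$ by one, the three contributions
\[
x^2(\nabla_J^{0,1})^2,\qquad y^2(\nabla^{0,1})^2,\qquad xy\bigl(\nabla^{0,1}\nabla_J^{0,1}+\nabla_J^{0,1}\nabla^{0,1}\bigr)
\]
land in three distinct bi-degrees of $\qA^{\ast,\ast}(E)$; hence each must vanish separately, giving the three desired relations.

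The second step is purely interpretative. The vanishing $(\nabla^{0,1})^2=0$ is precisely the integrability condition of Koszul--Malgrange, proving part (1). For part (2), view $\nabla_J^{0,1}$ as a degree-$1$ element of $\End^{\ast}_{\CC}(A^{0,\ast}(E))$ endowed with the differential $d=[\nabla^{0,1},-]$ and the graded commutator bracket. Then the cross relation reads $[\nabla^{0,1},\nabla_J^{0,1}]=\nabla^{0,1}\nabla_J^{0,1}+\nabla_J^{0,1}\nabla^{0,1}=0$, i.e.\ $d(\nabla_J^{0,1})=0$, while $(\nabla_J^{0,1})^2=0$ is equivalent to $[\nabla_J^{0,1},\nabla_J^{0,1}]=0$; together these are exactly the two conditions defining a strong Maurer--Cartan solution in the sense of Definition~\ref{def:strongMC}.

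The only delicate point is the bi-graded decomposition in the first step: one must verify that $x\nabla_J^{0,1}$ and $y\nabla^{0,1}$ really shift the bi-degree cleanly so that the three terms in the expansion of $(x\nabla_J^{0,1}+y\nabla^{0,1})^2$ cannot cancel against each other. This is guaranteed by Proposition~\ref{prop:Phi}, which upgrades $\varphi$ to an isomorphism of double complexes whose two differentials are exactly $x\nabla_J^{0,1}$ and $y\nabla^{0,1}$; once this functoriality of $\varphi$ with respect to the bi-grading is in hand, the rest of the argument is just unwinding definitions.
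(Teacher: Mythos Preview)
Your argument is correct and follows essentially the same route as the paper's proof: both reduce the two assertions to the three relations $(\nabla^{0,1})^2=0$, $(\nabla_J^{0,1})^2=0$, $\nabla^{0,1}\nabla_J^{0,1}+\nabla_J^{0,1}\nabla^{0,1}=0$, which Remark~\ref{rmk.qDdoublecomplex} already identifies as equivalent to autoduality via Lemma~\ref{lemma:autodual are flat}. One small point: you should justify the bi-degree separation directly from the explicit description in Remark~\ref{rmk.qDdoublecomplex} (items (1) and (2)) rather than from Proposition~\ref{prop:Phi}, since the latter assumes $\nabla$ autodual in its hypothesis and its proof in turn rests on Remark~\ref{rmk.qDdoublecomplex}; the degree-shifting property of $x\nabla_J^{0,1}$ and $y\nabla^{0,1}$ is definitional and needs no further input.
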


\begin{rmk}\label{rmk:extension connection}
Notice that the proposition above does not say that a solution of the Maurer--Cartan equation in $\left(\End_{\CC}^{\ast}(A^{0,\ast}(E)),[\nabla^{0,1},-],[-,-]\right)$ gives rise to an autodual connection. In fact, given such a solution $\delta$, this is true if and only if the operator $\nabla_\delta\defeq J\circ\delta\circ J^{-1}+\nabla^{0,1}$ is a connection on $E$. 
\end{rmk}

When moreover the connection $\nabla$ is hyper-holomorphic, i.e.\ it is the Chern connection of a hermitian metric, we have the further result due to Verbitsky. Similarly to before, let us define the differential $\nabla^{0,1}_J$ on $A^{0,\ast}(E)$ as the composition
\[ \nabla^{0,1}_J=(J\otimes\id)^{-1}\circ\nabla^{1,0}\circ(J\otimes\id). \]

\begin{lemma}[\protect{\cite[Theorem~4.4]{Verbitsky:JAG1996}}]\label{lemma:debardebarJlemma}
Assume that the connection $\nabla$ is hyper-holomorphic. Then the strong $\nabla^{0,1}\,\nabla^{0,1}_J$-lemma holds on $A^{0,\ast}(E)$.

In particular $\left(A^{0,\ast}(E), \nabla^{0,1}, \nabla^{0,1}_J\right)$ is a DGMS vector space.
\begin{proof}
In \cite[Theorem~4.4]{Verbitsky:JAG1996}, Verbitsky proves a real version of the $\nabla^{0,1}\,\nabla^{0,1}_J$-lemma, but the proof holds verbatim in this case.

More precisely, Verbitsky proves analogs of the classical Hodge--Nakano--Kodaira identities, so that the proof is symmetric in $\nabla^{0,1}$ and $\nabla^{0,1}_J$, i.e.\ the $\nabla^{0,1}_J\,\nabla^{0,1}$-lemma holds as well. Therefore, the claim follows by Lemma~\ref{lemma.DGMSd0d1}.

\end{proof}
\end{lemma}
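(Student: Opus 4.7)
The strategy is to replicate on $A^{0,\ast}(E)$ the Deligne--Griffiths--Morgan--Sullivan argument for the classical $\partial\bar\partial$-lemma, substituting the pair $(\partial,\bar\partial)$ with $(\nabla^{0,1}_J,\nabla^{0,1})$. By Proposition~\ref{prop:hyper-hol as MC} both operators are differentials and anticommute; by Lemma~\ref{lemma.DGMSd0d1} it therefore suffices to check the two inclusions
\[ \ker(\nabla^{0,1})\cap \im(\nabla^{0,1}_J)\subseteq\im(\nabla^{0,1}\nabla^{0,1}_J) \]
together with its symmetric counterpart obtained by swapping the roles of the two differentials; the reverse inclusions are obvious from anticommutation.

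The hyper-holomorphic hermitian metric $h$ on $E$, combined with the Ricci-flat metric on $X$, supplies an $L^2$-pairing on $A^{0,\ast}(E)$ and hence formal adjoints $(\nabla^{0,1})^{\ast}$ and $(\nabla^{0,1}_J)^{\ast}$. The core of the proof is to establish hyperk\"ahler analogs of the Nakano--Kodaira identities, schematically of the form
\[ [\Lambda_{\omega_K},\nabla^{0,1}]=\sqrt{-1}\,(\nabla^{0,1}_J)^{\ast},\qquad [\Lambda_{\omega_K},\nabla^{0,1}_J]=-\sqrt{-1}\,(\nabla^{0,1})^{\ast}, \]
where $\Lambda_{\omega_K}$ is the $L^2$-adjoint of multiplication by the K\"ahler form $\omega_K$. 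Autoduality is essential here: in the twisted setting such identities a priori carry curvature-correction terms, but the curvature of an autodual connection is $\SU(2)$-invariant by Lemma~\ref{lemma:autodual are flat}, hence lies in a weight-$0$ subrepresentation of $A^2(End(E))$, and its commutators with the Lefschetz data vanish. This is precisely what is carried out in \cite[Section~4]{Verbitsky:JAG1996} in the real (untwisted) setting, and the hermitian twist reduces to the same calculation thanks to this vanishing. A formal consequence is the equality $\Delta_{\nabla^{0,1}}=\Delta_{\nabla^{0,1}_J}$ up to a universal constant, so that both operators share the same finite-dimensional space of harmonic sections, each commutes with the Laplacian and with the adjoint of the other, and an orthogonal Hodge decomposition is available with respect to either differential.

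With Hodge theory in hand, the lemma becomes a formal manipulation. Given $\alpha=\nabla^{0,1}_J\gamma$ with $\nabla^{0,1}\alpha=0$, decompose $\gamma=\gamma_h+\nabla^{0,1}\beta+(\nabla^{0,1})^{\ast}\beta'$ according to the $\nabla^{0,1}$-Hodge decomposition; the first summand contributes $\nabla^{0,1}_J\gamma_h$, which is harmonic for the common Laplacian and simultaneously $\nabla^{0,1}$-exact after addition to $\alpha$, hence vanishes; the second contributes $\nabla^{0,1}_J\nabla^{0,1}\beta=-\nabla^{0,1}\nabla^{0,1}_J\beta$; and the third is rewritten, using the Nakano--Kodaira identities and Green's operator of $\Delta_{\nabla^{0,1}}$, as $\nabla^{0,1}\nabla^{0,1}_J\mu'$ for an explicit $\mu'$. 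Collecting the three contributions produces $\mu$ with $\alpha=\nabla^{0,1}\nabla^{0,1}_J\mu$. The symmetric inclusion follows by exchanging $\nabla^{0,1}$ with $\nabla^{0,1}_J$, and Lemma~\ref{lemma.DGMSd0d1} finishes the proof.

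The main obstacle is the derivation of the twisted Nakano--Kodaira identities of the second paragraph: everything else is a mechanical transcription of the classical compact K\"ahler case. It is precisely the $\SU(2)$-invariance of the curvature, i.e.\ the hyper-holomorphicity assumption (as opposed to mere holomorphicity), that prevents obstructive zeroth-order curvature terms from appearing and breaking the parallel with the untwisted theory.
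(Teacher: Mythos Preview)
Your proposal is correct and follows essentially the same route as the paper: reduce via Lemma~\ref{lemma.DGMSd0d1} to the two one-sided $d_0d_1$-lemmas, invoke Verbitsky's hyper-K\"ahler Nakano--Kodaira identities to obtain equality of the two Laplacians, and then run the standard DGMS Hodge-theoretic argument, noting that the symmetry of the identities in $\nabla^{0,1}$ and $\nabla^{0,1}_J$ gives the second inclusion for free. The paper's own proof is deliberately terse---it simply cites \cite[Theorem~4.4]{Verbitsky:JAG1996} and observes the symmetry---whereas you have unpacked what that citation contains; your explicit Hodge decomposition of $\gamma$ is slightly more roundabout than decomposing $\alpha$ directly (the harmonic piece of $\gamma$ is annihilated by $\nabla^{0,1}_J$ outright, since harmonics for the common Laplacian are closed for both differentials), but the argument goes through.
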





\subsection{Properties of the quaternionic Dolbeault complex}
Let $E$ be a complex vector bundle on an irreducible holomorphic symplectic manifold $X$ and suppose that $\nabla=\nabla^{1,0}+\nabla^{0,1}$ is a hyper-holomorphic connection on $E$. In the rest of this section we will need the DGMS structure on the quaternionic Dolbeault complex, for this reason we restrict to hyper-holomorphic connections.

In this subsection we work with the quaternionic Dolbeault complex, see Definition~\ref{def.qD}.
Denote by $V$ the irreducible $2$-dimensional $\ssl(2)$-representation, and fix a basis $\{x,y\}$ of $V$. Then the quaternionic Dolbeault complex can be written as
\[ \left(\qA^{\ast}(E)\,,\,x\nabla^{0,1}_J+y\nabla^{0,1}\right) \]
where $\qA^{k}(E)=A^{0,k}(E)\otimes_{\CC}\CC[x,y]_k$. Here $\CC[x,y]_k\cong\Sym^k(V)$ denotes the set of homogeneous polynomials of degree $k$ in the formal central variables $x$ and $y$.

\begin{lemma}\label{lemma:cohom of qD}
Let $X$ be an irreducible holomorphic symplectic manifold. Consider a complex vector bundle $E$ on $X$, endowed with an hyper-holomorphic connection $\nabla=\nabla^{1,0}+\nabla^{0,1}$. Then
\[
\oH^k(\qA^{\ast}(E))\,\cong\,\oH^k_{y\nabla^{0,1}}(A^{0,\ast}(E))\otimes_{\CC}\CC[x,y]_k\,\cong\,\oH^k_{x\nabla^{0,1}_J}(A^{0,\ast}(E))\otimes_{\CC}\CC[x,y]_k.
\]
\end{lemma}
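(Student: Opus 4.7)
The plan is to view $\qA^{\ast,\ast}(E)$ as a double complex, as in Remark~\ref{rmk.qDdoublecomplex}, with horizontal differential $x\nabla^{0,1}_J$ and vertical differential $y\nabla^{0,1}$, and to compute its total cohomology via the spectral sequence of the column filtration. Since $\nabla$ is hyper-holomorphic, Lemma~\ref{lemma:debardebarJlemma} supplies the strong $\nabla^{0,1}\nabla^{0,1}_J$-lemma on $A^{0,\ast}(E)$, which is the only analytic input.

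The rows of the double complex are $\qA^{p,\ast}(E) = x^p y^{\ast} A^{0,\ast}(E)$ with vertical differential $y\nabla^{0,1}$, which is nothing but the Dolbeault differential on the underlying forms. Hence the $E_1$-page in bidegree $(p,q)$ is $E_1^{p,q} \cong x^p y^q\,\oH^{p+q}_{\nabla^{0,1}}(A^{0,\ast}(E))$. By Lemma~\ref{lemma.DGMScohomology} the differential induced by $\nabla^{0,1}_J$ on $\oH_{\nabla^{0,1}}$ is trivial, so $d_1 = 0$ on $E_1$.

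The core step is the full degeneration $d_r = 0$ for every $r \geq 1$; this is the classical consequence of the $d_0 d_1$-lemma in a DGMS setting. I would argue it at the level of representatives, mimicking the $\iota$- and $\rho$-steps in the proof of Theorem~\ref{thm.DGMS}. Given a $D$-closed element $\xi = \sum_{p+q=k} x^p y^q \xi_{p,q}$ with $D = x\nabla^{0,1}_J + y\nabla^{0,1}$, the equality $D\xi=0$ yields $\nabla^{0,1}_J\xi_{p-1,q} + \nabla^{0,1}\xi_{p,q-1} = 0$ for every $(p,q)$ with $p+q=k+1$. The top relation forces $\xi_{0,k}$ to be $\nabla^{0,1}$-closed, and then $\nabla^{0,1}_J\xi_{0,k}\in \ker(\nabla^{0,1}_J)\cap\im(\nabla^{0,1})$ which, by Lemma~\ref{lemma.DGMS}, coincides with $\im(\nabla^{0,1}_J\nabla^{0,1})$. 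Writing $\nabla^{0,1}_J\xi_{0,k} = \nabla^{0,1}_J\nabla^{0,1}\eta$ for some $\eta\in A^{0,k-1}(E)$ and replacing $\xi$ by $\xi - D(y^{k-1}\eta)$ makes $\xi_{0,k}$ lie in $Z^k\defeq \ker(\nabla^{0,1})\cap \ker(\nabla^{0,1}_J)\cap A^{0,k}(E)$; a direct check shows that the only other modified entry $\xi_{1,k-1}$ remains $\nabla^{0,1}$-closed. Iterating across the components from top to bottom, the triangular structure of $D$ ensures that each correction does not spoil the closure of previously adjusted entries. The main obstacle is the book-keeping of this cascade of corrections, but it is formally parallel to the proof of Theorem~\ref{thm.DGMS}.

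Once every $\xi_{p,q}$ lies in $Z^k$, the class $[\xi]$ is represented by an element of $Z^k\otimes\CC[x,y]_k$, and it is unique modulo $\im(\nabla^{0,1}\nabla^{0,1}_J)\otimes\CC[x,y]_k$: if $\sum x^p y^q \alpha_{p,q} = D\zeta$ with $\alpha_{p,q}\in Z^k$, then separating bidegrees places each $\alpha_{p,q}$ in $\ker(\nabla^{0,1})\cap\ker(\nabla^{0,1}_J)\cap(\im(\nabla^{0,1})+\im(\nabla^{0,1}_J))$, which by the strong $\nabla^{0,1}\nabla^{0,1}_J$-lemma equals $\im(\nabla^{0,1}\nabla^{0,1}_J)$. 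Finally, as recalled in Remark~\ref{rmk:same cohomology}, the quotient $Z^k/\im(\nabla^{0,1}\nabla^{0,1}_J)$ is canonically isomorphic to both $\oH^k_{\nabla^{0,1}}(A^{0,\ast}(E))$ and $\oH^k_{\nabla^{0,1}_J}(A^{0,\ast}(E))$, which yields the two stated isomorphisms simultaneously.
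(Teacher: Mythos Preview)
Your spectral sequence computation goes wrong at the bottom row. The column $\qA^{p,\ast}(E)$ for fixed $p$ is the \emph{truncated} complex
\[
0\longrightarrow x^pA^{0,p}(E)\xrightarrow{\;y\nabla^{0,1}\;} x^pyA^{0,p+1}(E)\longrightarrow\cdots,
\]
so at $q=0$ one only gets a kernel, not a cohomology group:
\[
E_1^{p,0}\;=\;x^p\,\ker\!\left(\nabla^{0,1}\colon A^{0,p}(E)\to A^{0,p+1}(E)\right),
\]
which for $p\geq 1$ is strictly larger than $x^pH^{p}_{\nabla^{0,1}}(A^{0,\ast}(E))$. Consequently $d_1$ does \emph{not} vanish on the $q=0$ row: it is the map induced by $\nabla^{0,1}_J$ on $\ker(\nabla^{0,1})$, whose cohomology the paper then identifies with $H^{\ast}_{\nabla^{0,1}}$ via the $\iota$- and $\rho$-steps of Theorem~\ref{thm.DGMS}. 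This is precisely the content of the $E_2$-computation in the paper's proof, and it is the reason why Corollary~\ref{cor:qD not formal} holds: the spectral sequence degenerates at $E_2$, not at $E_1$, so your framing ``$d_r=0$ for every $r\geq 1$'' is false and in fact contradicts that corollary.

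That said, your direct argument with representatives is essentially sound and furnishes an alternative route once it is detached from the spectral sequence language. The cascade of corrections does show that every $D$-closed $\xi$ is $D$-cohomologous to an element of $Z^k\otimes\CC[x,y]_k$, and your uniqueness step via the strong $\nabla^{0,1}\nabla^{0,1}_J$-lemma is correct; together with the identification $Z^k/\im(\nabla^{0,1}\nabla^{0,1}_J)\cong H^k_{\nabla^{0,1}}\cong H^k_{\nabla^{0,1}_J}$ this yields the statement directly. The paper instead stays within the spectral sequence, carefully treating the anomalous $q=0$ row and proving degeneration at $E_2$. Your approach trades that spectral bookkeeping for an explicit inductive normalisation of cocycles; both ultimately rest on the same DGMS input (Lemma~\ref{lemma:debardebarJlemma}), but you should drop the claim that $E_1$ already has the final shape.
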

\begin{proof}
The proof is a standard spectral sequence argument. In fact, by Remark~\ref{rmk.qDdoublecomplex} the quaternionic Dolbeault complex is the total complex associated to the double complex
\[ \left( \qA^{\ast,\ast}(E)\,,\,x\nabla^{0,1}_J\,,\, y\nabla^{0,1} \right) \; . \]
Taking cohomology with respect to the vertical differential $y\nabla^{0,1}$, the first page $E_1$ reads like
\[ E_1^{p,q} \,=\, \begin{cases}
x^py^q\, \oH^{p+q}_{\nabla^{0,1}}(A^{0,\ast}(E)) & \mbox{ if } q\geq 1\\
x^p\,\ker \left(\nabla^{0,1}\colon A^{0,p}\to A^{0,p+1}\right) & \mbox{ if } q=0
\end{cases} \]
while the induced horizontal differentials vanish by Lemma~\ref{lemma.DGMScohomology} as soon as $q\geq1$. Therefore the second page $E_2$ will be
\[ E_2^{p,q} \,=\, \begin{cases}
x^py^q\, \oH^{p+q}_{\nabla^{0,1}}(A^{0,\ast}(E)) & \mbox{ if } q\geq 1\\
x^p\, \oH^p_{\nabla_J^{0,1}}\left(\ker \left(\nabla^{0,1}\colon A^{0,\ast}\to A^{0,\ast+1}\right)\right) & \mbox{ if } q=0 \; .
\end{cases} \]
Now we want to show that
\[ E_2^{p,0}=\oH^p_{\nabla_J^{0,1}}\left(\ker \left(\nabla^{0,1}\colon A^{0,\ast}\to A^{0,\ast+1}\right)\right) \cong \oH^p_{\nabla^{0,1}_J}\left(A^{0,\ast}(E)\right) \cong \oH^p_{\nabla^{0,1}}\left(A^{0,\ast}(E)\right) \; . \]
The last isomorphism immediately follows by Remark~\ref{rmk:same cohomology}. Let us show the former. Suppose that a class $[a]\in \oH^p_{\nabla_J^{0,1}}\left(\ker \nabla^{0,1}\right)$ is trivial, then $a=\nabla_J^{0,1}b$ for some $b\in A^{0,p-1}$ such that $\nabla^{0,1}b=0$. In particular, the map
\[ \mathsf{g}\colon \oH^p_{\nabla_J^{0,1}}\left(\ker \left(\nabla^{0,1}\colon A^{0,\ast}\to A^{0,\ast+1}\right)\right) \longrightarrow \oH^p_{\nabla^{0,1}_J}\left(A^{0,\ast}(E)\right) \qquad \qquad [a]\mapsto [a] \]
is well-defined. To show the surjectivity of $\mathsf{g}$, consider a class $[a]\in\oH^p_{\nabla^{0,1}_J}\left(A^{0,\ast}(E)\right)$ and notice that $\nabla^{0,1}a\in \ker(\nabla_J^{0,1})\cap im(\nabla^{0,1})$. Hence there exists an element $b\in A^{0,p-1}$ such that $\nabla^{0,1}a=\nabla^{0,1}\nabla^{0,1}_Jb$, so that it is sufficient to consider $a'=a-\nabla_J^{0,1}b$ which satisfies
\[ [a']\in\oH^p_{\nabla_J^{0,1}}\left(\ker \left(\nabla^{0,1}\colon A^{0,\ast}\to A^{0,\ast+1}\right)\right) \mbox{ and } [a']=[a]\in\oH^p_{\nabla^{0,1}_J}\left(A^{0,\ast}(E)\right) \; . \]
For the injectivity, let us consider an element $a\in A^{0,p}(E)$ such that $\nabla^{0,1}a=0$ and $\mathsf{g}[a]=0$. This is equivalent to say that
\[ \begin{cases}
\nabla^{0,1}a=0\\
\nabla^{0,1}_Ja=0\\
\exists b\in A^{0,p-1}(E) \mbox{ such that } a=\nabla^{0,1}_Jb
\end{cases} \]
so that $\nabla^{0,1}b\in \ker(\nabla^{0,1}_J)\cap im(\nabla^{0,1})$. Therefore there exists $c\in A^{0,p-2}(E)$ such that $\nabla^{0,1}b=\nabla^{0,1}\nabla^{0,1}_Jc$. In particular we may define $b'=b-\nabla^{0,1}_Jc$ so that
\[ [a]=[\,\nabla^{0,1}_Jb'\,]\in\oH^p_{\nabla_J^{0,1}}\left(\ker \left(\nabla^{0,1}\colon A^{0,\ast}\to A^{0,\ast+1}\right)\right) \]
and the injectivity of $\mathsf{g}$ follows.

By the strong $\nabla^{0,1}\nabla_J^{0,1}$-lemma, the spectral sequence degenerates at the second page $E_2$, so that the $k$-th cohomology of the total complex is given by the direct sum
\[ \oH^k_{x\nabla^{0,1}_J+y\nabla^{0,1}}(\qA^{\ast}(E)) = \bigoplus_{p+q=k}\,x^py^q\,\oH^k_{y\nabla^{0,1}}(\qA^{\ast}(E)) \; .\]
The same argument taking the first page with respect to the horizontal differential $x\nabla^{0,1}_J$ provides the latter isomorphism in the statement.
\end{proof}

\begin{cor}\label{cor:qD not formal}
The triple $\left(\qA^{\ast,\ast}(E),y\nabla^{0,1},x\nabla^{0,1}_J\right)$ is not a DGMS vector space.
\begin{proof}
If it were a DGMS vector space, then by Lemma~\ref{lemma.DGMScohomology} the associated spectral sequence would degenerate at $E_1$, but we have seen in Lemma~\ref{lemma:cohom of qD} that it degenerates at $E_2$.
\end{proof}
\end{cor}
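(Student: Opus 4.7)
The plan is to proceed by contradiction: assume that $(\qA^{\ast,\ast}(E), y\nabla^{0,1}, x\nabla^{0,1}_J)$ is a DGMS vector space and derive a conflict with the spectral sequence computation carried out in Lemma~\ref{lemma:cohom of qD}. The axioms of DGMS beyond the $d_0d_1$-lemma are already known: $(y\nabla^{0,1})^2 = 0$, $(x\nabla^{0,1}_J)^2 = 0$ and $[y\nabla^{0,1}, x\nabla^{0,1}_J] = 0$ all follow from Remark~\ref{rmk.qDdoublecomplex}. So the only axiom whose failure can yield the corollary is the strong $d_0d_1$-lemma, and I would attack that axiom indirectly through its cohomological consequences.

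The first step will be to invoke Lemma~\ref{lemma.DGMScohomology}: under the hypothetical DGMS structure, the differential induced by $x\nabla^{0,1}_J$ on the cohomology $\oH_{y\nabla^{0,1}}(\qA^{\ast,\ast}(E))$ must be identically zero. Reading the quaternionic Dolbeault complex as the total complex of the double complex described in Remark~\ref{rmk.qDdoublecomplex}, this vanishing is exactly the statement that, in the spectral sequence obtained by taking vertical cohomology first, the horizontal differential at the $E_1$ page is zero. Consequently the spectral sequence would have to degenerate at $E_1$, i.e.\ $E_1 = E_\infty$.

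The second step is to contradict this with the explicit $E_1$ and $E_2$ pages exhibited inside the proof of Lemma~\ref{lemma:cohom of qD}. On the row $q = 0$ the $E_1$ page equals $\ker\bigl(\nabla^{0,1} \colon A^{0,p}(E) \to A^{0,p+1}(E)\bigr)$, while the $E_2$ page on the same row equals $\oH^p_{\nabla^{0,1}_J}(\ker \nabla^{0,1})$. The map connecting them is exactly the horizontal differential $x\nabla^{0,1}_J$ at $E_1$, restricted to the bottom row. Generically this restriction is non-trivial — indeed $\ker(\nabla^{0,1})$ strictly contains $\oH^p_{\nabla^{0,1}}$ whenever there exist non-zero $\nabla^{0,1}$-exact forms, and the further $\nabla^{0,1}_J$-quotient at $E_2$ is precisely what cuts this kernel down to the genuine Dolbeault cohomology used in Lemma~\ref{lemma:cohom of qD}. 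Hence $E_1 \neq E_2$ in general, and the hypothetical degeneration at $E_1$ is impossible.

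The main obstacle, such as it is, is not the algebraic reduction but rather ensuring that in the geometric setting under consideration the bottom-row differential at $E_1$ is genuinely non-zero, so that the contradiction is not vacuous. This is guaranteed by the asymmetric treatment of the row $q = 0$ in the proof of Lemma~\ref{lemma:cohom of qD}: the extra $\nabla^{0,1}_J$-cohomology step needed to pass from $E_1$ to $E_2$ is forced precisely because $\nabla^{0,1}_J$ does not vanish on $\ker(\nabla^{0,1})$. Putting the two steps together rules out the DGMS hypothesis, proving the corollary.
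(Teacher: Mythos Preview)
Your proposal is correct and follows essentially the same route as the paper's proof: assume DGMS, invoke Lemma~\ref{lemma.DGMScohomology} to force the $E_1$-differential (induced by $x\nabla^{0,1}_J$ on $\oH_{y\nabla^{0,1}}$) to vanish, and contradict this with the explicit $E_1$ versus $E_2$ computation on the row $q=0$ carried out in the proof of Lemma~\ref{lemma:cohom of qD}. Your extra paragraph justifying that $E_1^{p,0}\neq E_2^{p,0}$ (the former being the infinite-dimensional $\ker(\nabla^{0,1})$, the latter the finite-dimensional Dolbeault cohomology) is a welcome clarification that the paper leaves implicit; you may want to replace the hedges ``generically'' and ``in general'' with that dimension argument, since the corollary is stated as an absolute claim.
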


Recall that on the Dolbeault algebra $A^{0,\ast}(End(E))$ there is an associative product, defined as
\begin{equation}\label{eqn:product on End} 
(\alpha\otimes f)\cdot (\beta\otimes g) = \alpha\wedge\beta\otimes (f\circ g)
\end{equation}
for every $\alpha,\beta\in A^{0,\ast}$ and every sections $f$ and $g$ of $End(E)$. In particular
\[ (A^{0,\ast}(End(E)),\cdot,[\nabla^{0,1},-]) \]
is an associative DG algebra.
This of course extends to an associative product on the quaternionic Dolbeault complex $\qA^{\ast}(End(E))$. It is therefore natural to ask if this product endows the quaternionic Dolbeault complex with a structure of associative DG algebra, i.e. if the differential of $\qA^{\ast}(End(E))$ behaves as a derivation.

\begin{rmk}\label{rmk.End(E)connection}
Given a holomorphic vector bundle $E$ and a connection $\nabla$ on it, the connection $\widetilde{\nabla}$ on $End(E)$ induced by $E$  is $\widetilde{\nabla}=[\nabla,-]$. More explicitly, for any section $h$ of $End(E)$ we have 
$$ (\widetilde{\nabla}h)(s)=\nabla h(s)-h(\nabla s).$$
We claim that $\widetilde{\nabla}$ behaves like a derivation with respect to the associative product on $A^{0,\ast}(End(E))$.

In fact, let us take two sections $f$ and $g$ of $End(E)$ and a vector field $\xi$ (i.e.\ $\xi$ is a section of the complexified tangent bundle $T_M$). Then
\begin{itemize}
\item $\widetilde{\nabla}_{\xi}(f\circ g)(s) = \nabla_{\xi}((f\circ g)(s)) - (f\circ g)(\left(\nabla_{\xi}(s)\right)$
\item $\widetilde{\nabla}_{\xi}(f)(g(s)) = \nabla_{\xi}((f\circ g)(s)) - f\left(\nabla_{\xi}(g(s))\right)$
\item $\widetilde{\nabla}_{\xi}(g)(s) =  \nabla_{\xi}(g(s)) - g(\nabla_{\xi}(s))$.
\end{itemize}
and by the arbitrariety of $s$ and $\xi$ we deduce $\widetilde{\nabla}(f\circ g) = \widetilde{\nabla}(f) g + f\widetilde{\nabla}(g)$.
\end{rmk}

\begin{prop}\label{prop.nablaJderivation}
Both differentials $[\nabla^{0,1},-]$ and $[\nabla^{0,1}_J,-]$ act as derivations with respect to the associative product on $A^{0,\ast}(End(E))$. In particular, all of the following are associative DG algebras:
\[ \left(A^{0,\ast}(End(E)),\cdot,[\nabla^{0,1},-]\right)\,, \quad \left(A^{0,\ast}(End(E)),\cdot,[\nabla_J^{0,1},-]\right)\,, \]
\[ \left(\qA^{0,\ast}(End(E)),\cdot,[x\nabla_J^{0,1}+y\nabla^{0,1},-]\right)\; . \]
\begin{proof}
It is sufficient to show that both differentials $[\nabla^{0,1},-]$ and $[\nabla^{0,1}_J,-]$ act as derivations with respect to the product on $A^{0,\ast}(End(E))$. This is quite classical and can be checked directly for $[\nabla^{0,1},-]$, so that the only non-trivial case is for $[\nabla^{0,1}_J,-]$.
Recall that by Remark~\ref{rmk.End(E)connection}, $\widetilde{\nabla}$ acts as a derivation and then so does \[\widetilde{\nabla}^{1,0}=\widetilde{\nabla}-\widetilde{\nabla}^{0,1}=\widetilde{\nabla}\, - \, [\nabla^{0,1},-] \; . \]
Now, since $J$ acts multiplicatively on $A^{0,\ast}$, we obtain that the conjugation
\[\widetilde{\nabla}^{0,1}_J = (J^{-1}\otimes\id)\circ[\nabla^{1,0},-]\circ (J\otimes\id) \]
is a derivation with respect to the product of $A^{0,\ast}(End(E))$.
Therefore, to conclude it is enough to show that $\widetilde{\nabla}^{0,1}_J = [\nabla^{0,1}_J,\,-\,]$, which is straighforward.
\end{proof}
\end{prop}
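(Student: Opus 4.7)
My plan is to split the statement into three pieces: the derivation property for $[\nabla^{0,1},-]$, for $[\nabla_J^{0,1},-]$, and then for the total differential on $\qA^{0,\ast}(End(E))$. The first is essentially classical, the third is formal once the other two are in place, so the genuine content sits in the $J$-twisted case. The strategy there is to leverage Remark~\ref{rmk.End(E)connection} (which gives the Leibniz rule for the full induced connection $\widetilde{\nabla}=[\nabla,-]$ on $End(E)$), then split by Hodge type, and finally transport the resulting derivation across the $J$-conjugation.

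For the $[\nabla^{0,1},-]$ part I would expand $\nabla^{0,1}$ on a product $(\alpha\otimes f)\cdot(\beta\otimes g)=\alpha\wedge\beta\otimes(f\circ g)$ using the Leibniz rule of $\nabla^{0,1}$ on $E$ (applied to local sections $f(g(s))$) together with the standard graded Leibniz rule for $\debar$ on forms, and reorganise the result with the usual Koszul sign, recovering $[\nabla^{0,1},\alpha\otimes f]\cdot(\beta\otimes g)+(-1)^{|\alpha|}(\alpha\otimes f)\cdot[\nabla^{0,1},\beta\otimes g]$. Alternatively one may simply invoke Remark~\ref{rmk.End(E)connection} and then decompose $\widetilde{\nabla}=\widetilde{\nabla}^{1,0}+\widetilde{\nabla}^{0,1}$ by type with respect to $I$: since $\widetilde{\nabla}$ is a derivation and the type decomposition of a derivation into its $(1,0)$ and $(0,1)$ components preserves the Leibniz rule (the product respects bidegrees), both $\widetilde{\nabla}^{0,1}=[\nabla^{0,1},-]$ and $\widetilde{\nabla}^{1,0}=[\nabla^{1,0},-]$ are derivations on $A^{0,\ast}(End(E))$.

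For $[\nabla_J^{0,1},-]$, the key observation is that the complex structure $J$ acts multiplicatively on $\Lambda^\ast T_M^\ast$ (the forced Leibniz rule in $\S$~\ref{section.claudio}), so $J\otimes\id$ is a graded algebra automorphism of $A^{0,\ast}(End(E))$ after identifying the various Dolbeault pieces as in Lemma~\ref{lemma:Phi}. Since conjugating a derivation by an algebra automorphism yields again a derivation, the operator
\[
(J^{-1}\otimes\id)\circ[\nabla^{1,0},-]\circ(J\otimes\id)
\]
is a derivation of $A^{0,\ast}(End(E))$. It remains to identify this operator with $[\nabla_J^{0,1},-]$. This is a matter of unwinding the definition $\nabla_J^{0,1}=(J^{-1}\otimes\id)\circ\nabla^{1,0}\circ(J\otimes\id)$ together with the fact that $J\otimes\id$ acts trivially on the $End(E)$ factor, so conjugation commutes with taking commutators of connection operators with $End(E)$-valued forms; this is the step I expect to be the main place where a small but careful check is needed (keeping track of signs and of the fact that $J$ preserves the $End(E)$-component).

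Finally, on $\qA^{\ast}(E)=\Sym^\ast V\otimes A^{0,\ast}(E)$ the variables $x,y$ are central, so $x\nabla_J^{0,1}$ and $y\nabla^{0,1}$ are each derivations (central scalars preserve the Leibniz rule) and their sum $x\nabla_J^{0,1}+y\nabla^{0,1}$ is therefore a derivation on $\qA^{0,\ast}(End(E))$. Combined with $d^2=0$ (Lemma~\ref{lemma:autodual are flat}) and the already-verified associativity of the product, this yields the three DG algebra structures in the statement.
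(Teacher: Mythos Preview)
Your proposal is correct and follows essentially the same route as the paper: use Remark~\ref{rmk.End(E)connection} to see that $\widetilde{\nabla}=[\nabla,-]$ is a derivation, split off $\widetilde{\nabla}^{1,0}=[\nabla^{1,0},-]$ by Hodge type, conjugate by the multiplicative action of $J$ to obtain a derivation on $A^{0,\ast}(End(E))$, and identify the result with $[\nabla_J^{0,1},-]$. Your treatment of the quaternionic Dolbeault case via centrality of $x,y$ is a bit more explicit than the paper's, but the argument is the same in substance.
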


\textbf{Notation.} Let $E$ be a complex vector bundle. If a Chern connection $\nabla$ on $E$ induces a hyper-holomorphic connection $[\nabla,-]$ on $End(E)$, then $\nabla$ is called \emph{projectively hyper-holomorphic}.

\begin{thm}\label{thm:formality of qD}
Consider a complex vector bundle $E$ endowed with a projectively hyper-holomorphic connection $\nabla$. Then $\left(A^{0,\ast}(End(E)),\nabla^{0,1}\right)$ is a formal associative algebra.
\begin{proof} 
By Lemma~\ref{lemma:debardebarJlemma} and Proposition~\ref{prop.nablaJderivation}, the triple $\left(A^{0,\ast}(End(E)),\nabla^{0,1},\nabla^{0,1}_J\right)$ is an associative DGMS algebra. Therefore the claim follows from Theorem~\ref{thm.DGMS}.
\end{proof}
\end{thm}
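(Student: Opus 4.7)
The plan is to recognize that the statement is the direct conclusion of feeding the data into the abstract machinery of associative DGMS algebras developed in Section~\ref{section:associative algebras}. The key conceptual move is to pass from the single differential $\nabla^{0,1}$ to the pair $(\nabla^{0,1},\nabla^{0,1}_J)$ obtained from the hyper-K\"ahler structure, and observe that the projectively hyper-holomorphic hypothesis is precisely what is needed to guarantee that this pair endows $A^{0,\ast}(End(E))$ with the structure of an associative DGMS algebra.

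First, I would set up the two candidate differentials on $A^{0,\ast}(End(E))$ induced by the connection $[\nabla,-]$ on $End(E)$, namely $[\nabla^{0,1},-]$ and $[\nabla^{0,1}_J,-]=(J^{-1}\otimes\id)\circ[\nabla^{1,0},-]\circ(J\otimes\id)$. The hypothesis that $\nabla$ is projectively hyper-holomorphic means by definition that the induced connection on $End(E)$ is hyper-holomorphic, so all of the hyper-holomorphic results apply to $A^{0,\ast}(End(E))$ directly. In particular, Proposition~\ref{prop:hyper-hol as MC} applied to $End(E)$ yields $(\nabla^{0,1})^2=(\nabla^{0,1}_J)^2=[\nabla^{0,1},\nabla^{0,1}_J]=0$, which are the first two axioms of a DGMS vector space from Definition~\ref{def.DGMS}.

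Next, the strong $\nabla^{0,1}\nabla^{0,1}_J$-lemma on $A^{0,\ast}(End(E))$ is exactly the content of Lemma~\ref{lemma:debardebarJlemma}, once again applied to the hyper-holomorphic bundle $End(E)$. Combined with the two previous relations, this certifies that $(A^{0,\ast}(End(E)),\nabla^{0,1},\nabla^{0,1}_J)$ is a DGMS vector space in the sense of Definition~\ref{def.DGMS}. Finally, Proposition~\ref{prop.nablaJderivation} asserts that both $[\nabla^{0,1},-]$ and $[\nabla^{0,1}_J,-]$ are derivations with respect to the associative product~\eqref{eqn:product on End} on $A^{0,\ast}(End(E))$, promoting the DGMS vector space structure to an associative DGMS algebra structure in the sense of Definition~\ref{def.DGMSalgebra}.

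With all DGMS axioms verified, the formality of $(A^{0,\ast}(End(E)),\cdot,\nabla^{0,1})$ is an immediate application of the abstract formality Theorem~\ref{thm.DGMS}, which produces the zig-zag of quasi-isomorphisms $A^{0,\ast}(End(E))\leftarrow \ker(\nabla^{0,1}_J)\to \oH_{\nabla^{0,1}_J}(A^{0,\ast}(End(E)))$ of associative DG algebras. The only real point requiring care is to make sure that all the ingredients established in Section~\ref{section:def of hyper-hol connections} for a hyper-holomorphic connection on $E$ transfer to the induced connection on $End(E)$; but this is precisely the content of the word \emph{projectively} in the hypothesis, so no additional work is needed. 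I do not foresee a substantive obstacle in this proof: the genuine content has already been packaged into Theorem~\ref{thm.DGMS}, Lemma~\ref{lemma:debardebarJlemma}, and Proposition~\ref{prop.nablaJderivation}, and the present statement is their formal consequence.
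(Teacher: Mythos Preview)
Your proposal is correct and follows essentially the same route as the paper: verify that $\left(A^{0,\ast}(End(E)),\nabla^{0,1},\nabla^{0,1}_J\right)$ is an associative DGMS algebra via Lemma~\ref{lemma:debardebarJlemma} and Proposition~\ref{prop.nablaJderivation}, then invoke Theorem~\ref{thm.DGMS}. Your additional unpacking of the first two DGMS axioms through Proposition~\ref{prop:hyper-hol as MC} is fine, though redundant since Lemma~\ref{lemma:debardebarJlemma} already records the full DGMS vector space structure.
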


As we already remarked in Corollary~\ref{cor:qD not formal}, the quaternionic Dolbeault algebra is not a DGMS algebra. Nevertheless, following the analogy with the case of flat connections studied in~\cite{GM88}, it is natural to ask whether it is formal or not (we will clarify better this analogy in Section~\ref{section:moduli of connections}).

\begin{question}\label{question:qD formal}
Is $\left(\qA^{\ast}(End(E)),[x\nabla^{0,1}_J+y\nabla^{0,1},-]\right)$ a formal associative algebra?
\end{question}

We conclude by highlighting an interesting relation between the classical Dolbeault DG Lie algebra $\left(A^{0,\ast}(End(E)),\nabla^{0,1}\right)$ and the quaternionic Dolbeault DG Lie algebra $\left(\qA^{\ast}(End(E)),[x\nabla^{0,1}_J+y\nabla^{0,1},-]\right)$.

\begin{prop}\label{prop:qD hom abelian}
The classical Dolbeault DG Lie algebra is homotopy abelian if and only if quaternionic Dolbeault DG Lie algebra is homotopy abelian.
\begin{proof}
First suppose $\left(\qA^{\ast}(End(E)),[x\nabla^{0,1}_J+y\nabla^{0,1},-]\right)$ is homotopy abelian. There is an injective morphism
\[ \mathsf{g}\colon \left(A^{0,\ast}(End(E)),\nabla^{0,1}\right) \longrightarrow \left(\qA^{\ast}(End(E)),[x\nabla^{0,1}_J+y\nabla^{0,1},-]\right) \]
given by the multiplication with the central variable $y$.
This map is injective in cohomology by Lemma~\ref{lemma.DGMScohomology}.
Therefore the conclusion follows by the standard homotopy abelianity transfer (see e.g.  \cite{KKP,IM13,Ma15}).

For the converse, define the \emph{extended quaternionic Dolbeault complex}  $\left(\eA^{\ast}(End(E)),[x\nabla^{0,1}_J+y\nabla^{0,1},-]\right)$, as
\[ \eA^{k}(End(E))=\bigoplus_{p+q=k}x^py^qA^{0,k}(End(E))\, \]
where $p,q\in\ZZ$ can be negative.
There is a natural injective morphism of associative algebras
\[ \mathsf{f}\colon\left(\qA^{\ast}(End(E)),[x\nabla^{0,1}_J+y\nabla^{0,1},-]\right)\longrightarrow\left(\eA^{\ast}(End(E)),[x\nabla^{0,1}_J+y\nabla^{0,1},-]\right). \]
Since $\left(\eA^{\ast,\ast}(End(E)),[y\nabla^{0,1},-], [x\nabla^{0,1}_J,-]\right)$ is a DGMS algebra, it is easy to check that 
\[ \oH^k_{[x\nabla^{0,1}_J+y\nabla^{0,1},-]}(\eA^{\ast,\ast}(End(E)))=\bigoplus_{p+q=k}\Ext^k(E,E) \]
(cf.\ Lemma~\ref{lemma.DGMScohomology}). In particular, $\mathsf{f}$ is injective in cohomology. 

Now, if $\left(A^{0,\ast}(End(E)),\nabla^{0,1}\right)$ is homotopy abelian, it is easy to see that also $\eA^{\ast}(End(E))$ is homotopy abelian. In fact it is formal (Proposition~\ref{prop.DGMStrick} and Theorem~\ref{thm.DGMS}) and the associative product is commutative in cohomology, since it is induced by the commutative product on the cohomology of $A^{0,\ast}(End(E))$.
\end{proof}
\end{prop}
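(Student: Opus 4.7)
The plan is to apply the standard principle (see e.g.\ \cite{KKP,IM13,Ma15}) that a morphism of DG Lie algebras which is injective on cohomology transfers homotopy abelianity from target to source. The argument is symmetric: in each direction one constructs a suitable morphism between $A^{0,\ast}(End(E))$ and the quaternionic Dolbeault complex $\qA^{\ast}(End(E))$ and verifies injectivity on cohomology using Lemma~\ref{lemma:cohom of qD}.

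For the forward direction --- quaternionic homotopy abelian implies classical homotopy abelian --- I would consider the morphism
\[
\mathsf{g}\colon \bigl(A^{0,\ast}(End(E)),\nabla^{0,1}\bigr) \longrightarrow \bigl(\qA^{\ast}(End(E)),[x\nabla_J^{0,1}+y\nabla^{0,1},-]\bigr)
\]
induced by multiplication with the central variable $y$, landing in the pure $y^{\bullet}$-component of each degree. Injectivity on cohomology is immediate from Lemma~\ref{lemma:cohom of qD}, which decomposes $\oH^k(\qA^{\ast}(End(E)))$ as $\Ext^k(E,E)\otimes \CC[x,y]_k$ so that the $y^k$-summand is a copy of $\Ext^k(E,E)$. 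The transfer principle then applies directly.

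For the converse, the obstruction is that $\qA^{\ast,\ast}(End(E))$ itself is \emph{not} a DGMS algebra by Corollary~\ref{cor:qD not formal}, so Theorem~\ref{thm.DGMS} does not apply directly. My plan is to embed $\qA$ into the larger \emph{extended quaternionic Dolbeault complex}
\[
\eA^{k}(End(E)) \,=\, \bigoplus_{p+q=k}\,x^p y^q A^{0,k}(End(E)),
\]
where $p,q \in \ZZ$ are now allowed to be negative. Invertibility of $x$ and $y$ should restore the strong $y\nabla^{0,1}\,x\nabla_J^{0,1}$-lemma (the failure on $\qA$ is witnessed precisely by the impossibility of finding pre-images with $p,q\geq 0$), so $\eA^{\ast,\ast}$ becomes a genuine DGMS algebra. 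Combining Proposition~\ref{prop.DGMStrick} with Theorem~\ref{thm.DGMS} then yields formality of $\bigl(\eA^{\ast}(End(E)),[x\nabla_J^{0,1}+y\nabla^{0,1},-]\bigr)$, with cohomology computed componentwise as $\bigoplus_{p,q\in\ZZ} x^p y^q \Ext^{p+q}(E,E)$.

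With formality of $\eA$ in hand, the assumption that $A^{0,\ast}(End(E))$ is homotopy abelian --- equivalently, formal with graded-commutative cohomology by Remark~\ref{rmk.formalityVShomotopyabelianity} --- implies the same for $\oH^{\ast}(\eA^{\ast}(End(E)))$, since the product in cohomology is induced componentwise by that of $A^{0,\ast}(End(E))$. A formal DG Lie algebra with graded-commutative cohomology is homotopy abelian, again by Remark~\ref{rmk.formalityVShomotopyabelianity}, so $\eA$ is homotopy abelian. The natural inclusion $\mathsf{f}\colon \qA^{\ast}(End(E)) \hookrightarrow \eA^{\ast}(End(E))$ is manifestly injective on cohomology, and a final application of the transfer principle produces homotopy abelianity of $\qA^{\ast}(End(E))$. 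The main technical point I expect to be the verification of the strong $y\nabla^{0,1}\,x\nabla_J^{0,1}$-lemma on $\eA^{\ast,\ast}$, which should ultimately reduce to Lemma~\ref{lemma:debardebarJlemma} applied componentwise once invertibility of the formal variables is exploited.
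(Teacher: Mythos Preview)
Your proposal is correct and follows essentially the same approach as the paper's proof: the forward direction via the multiplication-by-$y$ map $\mathsf{g}$ and homotopy abelianity transfer, and the converse via the extended complex $\eA^{\ast}(End(E))$ (with $p,q\in\ZZ$), its DGMS/formality properties, and the inclusion $\mathsf{f}$. The only cosmetic difference is that you invoke Lemma~\ref{lemma:cohom of qD} for injectivity of $\mathsf{g}$ on cohomology where the paper cites Lemma~\ref{lemma.DGMScohomology}; your choice is arguably the more direct reference.
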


\begin{rmk}\label{rmk.extendedqD}
Concerning Proposition~\ref{prop:qD hom abelian}, it is important to point out that the strong $y\nabla^{0,1}\,x\nabla^{0,1}_J$-lemma fails for the quaternionic Dolbeault bi-complex 
exactly at the hedge of the bi-complex, i.e.\ either at $\qA^{0,\ast}(End(E))$ or at $\qA^{\ast,0}(End(E))$. This is due to the fact that the indices are supposed to be positive. We can then fix this defect by considering the \emph{extended quaternionic Dolbeault complex} introduced in the proof above. $\left(\eA^{\ast}(End(E)),[x\nabla^{0,1}_J+y\nabla^{0,1},-]\right)$, where 
\[ \eA^{k}(End(E))=\bigoplus_{p+q=k}x^py^qA^{0,k}(End(E)). \]
Notice that here $p,q\in\ZZ$ can be negative.

It is easy to check now that the strong $y\nabla^{0,1}\,x\nabla^{0,1}_J$-lemma holds on $\left(\eA^{\ast,\ast}(End(E)),[y\nabla^{0,1},-], [x\nabla^{0,1}_J,-]\right)$ (just use Lemma~\ref{lemma:debardebarJlemma} and the fact that the variable $x$ and $y$ are central). In particular, the extended quaternionic Dolbeault complex is a formal associative algebra by Proposition~\ref{prop.DGMStrick} and Theorem~\ref{thm.DGMS}.
\end{rmk}


\subsection{Deformation functor of the quaternionic Dolbeault DG Lie algebra}

To state the next result, let us recall that every DG Lie algebra $(L,d,[-,-])$ induces a deformation functor $\Def_L\colon \Art_{\CC}\longrightarrow\Sets$
defined by
\[ \Def_L(B) = \frac{\MC(L\otimes_{\CC}\mathfrak{m}_B)}{\sim} \; , \]
where $\mathfrak{m}_B$ is the maximal ideal of $B\in\Art_{\CC}$, see $\S$~\ref{section.DefLie}. We shall adopt the standard notation $\CC[\epsilon]=\CC[t]/(t^2)$ for the so-called algebra of dual numbers. The \emph{tangent space} of the deformation functor $\Def_L$ is by definition
\[ T^1\Def_L = \Def_L(\CC[\epsilon]) \; . \]

If $E$ is a complex vector bundle on $X$ endowed with a projectively hyper-holomorphic connection $\nabla$, we shall denote by $\Def_{\qA^{\ast}(End(E))}$ the deformation functor associated to the quaternionic Dolbeault DG Lie algebra, whose Lie bracket is given by the graded commutator of the associative product, see Proposition~\ref{prop.nablaJderivation}.

\begin{prop}\label{prop:surjectivity}
Let $E$ be a complex vector bundle and $\nabla=\nabla^{1,0}+\nabla^{0,1}$ a projectively hyper-holomorphic connection. Consider the vector bundle $End(E)$ with the induced hyper-holomorphic connection $[\nabla,-]$.
Then the span of DG Lie algebras
\[ \left(A^{0,\ast}(End(E))\,,\,[\nabla^{0,1},-]\right) \stackrel{\hat{\pi}_y}{\longleftarrow} \qA^{\ast}(End(E)) \stackrel{\hat{\pi}_x}{\longrightarrow} \left(A^{0,\ast}(End(E))\,,\,[\nabla^{0,1}_J,-]\right) \]
given by evaluation maps induces natural transformations of deformation functors
\[ \Def_{\left(A^{0,\ast}(End(E))\,,\,[\nabla^{0,1},-]\right)} \stackrel{\pi_y}{\longleftarrow} \Def_{\qA^{\ast}(End(E))} \stackrel{\pi_x}{\longrightarrow} \Def_{\left(A^{0,\ast}(End(E))\,,\,[\nabla^{0,1}_J,-]\right)} \]
that are surjective for every $B\in\Art_{\mathbb{C}}$.

\begin{proof}
First notice that the maps $\pi_x$ and $\pi_y$ are induced by the natural projections of DG Lie algebras
\[ \left(A^{0,\ast}(End(E))\,,\,[\nabla^{0,1},-]\right) \stackrel{\hat{\pi}_y}{\longleftarrow} \qA^{\ast}(End(E)) \stackrel{\hat{\pi}_x}{\longrightarrow} \left(A^{0,\ast}(End(E))\,,\,[\nabla^{0,1}_J,-]\right) \]
where $\pi_x$ (respectively, $\pi_y$) is given by evaluating $x=1,y=0$ (respectively, $x=0,y=1$) in the quaternionic Dolbeault complex.

Now, $[\nabla,-]$ is hyper-holomorphic on $End(E)$ by hypothesis. Therefore, as in the proof of Theorem~\ref{thm.DGMS}, it follows that the natural injection  
\[ \left(\ker([\nabla^{0,1}_J,-])\,,\,[\nabla^{0,1},-]\right)\hookrightarrow \left(A^{0,\ast}(End(E))\,,\,[\nabla^{0,1},-]\right) \]
is a quasi-isomorphism of DG Lie algebras, hence inducing a natural isomorphism
\[ \Def_{\left(\ker(\nabla^{0,1}_J)\,,\,[\nabla^{0,1},-]\right)}\,\cong\, \Def_{\left(A^{0,\ast}(End(E))\,,\,[\nabla^{0,1},-]\right)} \; . \]
This means that in order to prove surjectivity of $\pi_x$ it is sufficient to show the following implication
\[ b_B\in\Def_{\left(\ker([\nabla^{0,1}_J,-])\,,\,[\nabla^{0,1},-]\right)}(B) \qquad \Longrightarrow \qquad y\, b_B\in \Def_{\qA^{\ast}(End(E))}(B) \]
for every $B\in\Art_{\CC}$.
To this aim, fix an element $b_B$ as above and consider
the equalities
\[ \left[x\nabla^{0,1}_J+y\nabla^{0,1}\,,\,y b_B\right]+\frac{1}{2}\left[y b_B\,,\,y b_B\right]=y^2\left(\left[\nabla^{0,1}\,,\,b_B\right]+\frac{1}{2}\left[b_B\,,\,b_B\right]\right)=0 \; , \]
where we used that $b_B$ is in the kernel of $[\nabla^{0,1}_J\,,\,-]$ and that it satisfies the Maurer--Cartan equation in the DG Lie algebra $\left(A^{0,\ast}(End(E))\otimes\mathfrak{m}_B\,,\,[\nabla^{0,1},-]\otimes\id_{\mathfrak{m}_B}\right)$ by assumption. Therefore $y\, b_A$ satisfies the Maurer--Cartan equation in the DG Lie algebra
\[ \left(\qA^{\ast}(End(E))\otimes\mathfrak{m}_B\,,\,[x\nabla^{0,1}_J+y\nabla^{0,1},-]\otimes\id_{\mathfrak{m}_B}\right) \]
as required. A similar argument shows the surjectivity of the map $\pi_y$.
\end{proof}
\end{prop}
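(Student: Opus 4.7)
The plan is to reduce the surjectivity of $\pi_y$ (and, symmetrically, of $\pi_x$) to an explicit Maurer--Cartan lifting inside $\qA^{\ast}(End(E))$. First I would set up a DGMS structure on $A^{0,\ast}(End(E))$ using the two differentials $[\nabla^{0,1},-]$ and $[\nabla^{0,1}_J,-]$: since $[\nabla,-]$ is hyper-holomorphic on $End(E)$ by hypothesis, Lemma~\ref{lemma:debardebarJlemma} provides the strong $\nabla^{0,1}\nabla_J^{0,1}$-lemma, while Proposition~\ref{prop.nablaJderivation} shows that both differentials are derivations of the associative product. Hence we are in the framework of Definition~\ref{def.DGMSalgebra}, and the proof of Theorem~\ref{thm.DGMS} yields that the inclusion
\[
\iota\colon \bigl(\ker([\nabla^{0,1}_J,-]),\,\cdot\,,[\nabla^{0,1},-]\bigr) \hookrightarrow \bigl(A^{0,\ast}(End(E)),\,\cdot\,,[\nabla^{0,1},-]\bigr)
\]
is a quasi-isomorphism of associative DG algebras, and in particular of the underlying DG Lie algebras. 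Homotopy invariance of the Maurer--Cartan functor then implies that every class in $\Def_{(A^{0,\ast}(End(E)),[\nabla^{0,1},-])}(B)$ admits a representative $b_B \in \ker([\nabla^{0,1}_J,-]) \otimes \mathfrak{m}_B$ satisfying $[\nabla^{0,1},b_B] + \tfrac{1}{2}[b_B,b_B] = 0$.

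The second step is the lifting itself: given such a representative, I would claim that $y\,b_B \in \qA^{1}(End(E)) \otimes \mathfrak{m}_B$ is the desired preimage under $\pi_y$. Evidently $\pi_y(y\,b_B)=b_B$, and centrality of $x,y$ together with the Leibniz rule gives
\[
\bigl[x\nabla_J^{0,1}+y\nabla^{0,1},\,y\,b_B\bigr] + \tfrac{1}{2}[y\,b_B,\,y\,b_B] = xy\,[\nabla_J^{0,1},b_B] + y^2\Bigl([\nabla^{0,1},b_B]+\tfrac{1}{2}[b_B,b_B]\Bigr),
\]
so the Maurer--Cartan equation in $\qA^{\ast}(End(E))\otimes \mathfrak{m}_B$ is satisfied term-by-term by the two properties of $b_B$ recorded above. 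The surjectivity of $\pi_x$ is then obtained by the completely symmetric argument: exchange the roles of $\nabla^{0,1}$ and $\nabla_J^{0,1}$ in the DGMS reduction, so one may assume that a representative $a_B\in \ker([\nabla^{0,1},-])\otimes\mathfrak{m}_B$ solves the Maurer--Cartan equation with respect to $[\nabla_J^{0,1},-]$, and lift it via multiplication by $x$; the analogous bracket computation then shows $x\,a_B$ is a Maurer--Cartan element whose image under $\pi_x$ is $a_B$.

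The main obstacle is concentrated in the first step: one must genuinely verify the DGMS axioms for $A^{0,\ast}(End(E))$, which is where the projectively hyper-holomorphic hypothesis is used in full strength (both the strong two-types lemma and the derivation property of $\nabla^{0,1}_J$). Once that reduction is in place, the rest is a short bracket calculation, and no delicate gauge-equivalence analysis is needed since the conclusion is the set-theoretic surjectivity of the maps of sets $\Def_{\qA^{\ast}(End(E))}(B) \to \Def_{(A^{0,\ast}(End(E)),[\nabla^{0,1},-])}(B)$ and similarly for $\pi_x$, for each individual $B\in\Art_{\mathbb{C}}$, which is immediate once a Maurer--Cartan representative has been lifted to a Maurer--Cartan element upstairs.
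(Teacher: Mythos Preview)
Your proposal is correct and follows essentially the same approach as the paper: both reduce to the quasi-isomorphism $\ker([\nabla^{0,1}_J,-])\hookrightarrow A^{0,\ast}(End(E))$ coming from the DGMS structure (via Theorem~\ref{thm.DGMS}), then lift a Maurer--Cartan representative $b_B$ lying in this kernel to $y\,b_B$ in the quaternionic Dolbeault complex and verify the Maurer--Cartan equation by the same bracket computation. Your write-up is slightly more explicit in citing Lemma~\ref{lemma:debardebarJlemma} and Proposition~\ref{prop.nablaJderivation} for the DGMS axioms and in displaying the intermediate $xy\,[\nabla_J^{0,1},b_B]$ term, but the argument is the same.
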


\begin{prop}\label{prop.autodualVSdefor}
Let $E$ be a complex vector bundle and $\nabla=\nabla^{1,0}+\nabla^{0,1}$ a projectively hyper-holomorphic connection. Then:
\begin{enumerate}
\item there exists a natural transformation of deformation functors
\[ \Def_{\qA^{\ast}(End(E))}\, \xrightarrow{\;\pi_y\times\,\pi_x\;}\, \Def_{\left(A^{0,\ast}(End(E))\,,\,[\nabla^{0,1},-]\right)} \times \Def_{\left(A^{0,\ast}(End(E))\,,\,[\nabla^{0,1}_J,-]\right)} \]
that is an isomorphism on tangent spaces,
\item there is a natural isomorphism of deformation functors
\[ \theta\colon \Def_{\left(A^{0,\ast}(End(E))\,,\,[\nabla^{0,1},-]\right)}\xrightarrow{\cong}\Def_{\left(A^{0,\ast}(End(E))\,,\,[\nabla^{0,1}_J,-]\right)} \; . \]
\end{enumerate}
\begin{proof}
The existence of the natural transformation of item $(1)$ follows by Proposition~\ref{prop:surjectivity}.
It is an isomorphism on $B=\CC[\epsilon]$ by Lemma~\ref{lemma:cohom of qD}. The last claim is a consequence of the homotopy invariance of deformation functors (cf. $\S$~\ref{section.DefLie}) and of Remark~\ref{rmk:same cohomology}, being
\[\left(A^{0,\ast}(End(E))\,,\,[\nabla^{0,1},-]\right) \quad \mbox{ and } \quad \left(A^{0,\ast}(End(E))\,,\,[\nabla^{0,1}_J,-]\right) \]
quasi-isomorphic DG Lie algebras.
\end{proof}
\end{prop}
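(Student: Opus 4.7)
The proposition packages two separate statements and both reduce cleanly to material already set up. I would structure the argument in two independent steps: part (1) via the surjectivity result of Proposition~\ref{prop:surjectivity} combined with a cohomology count, and part (2) via the homotopy invariance of deformation functors applied to the quasi-isomorphism of DG algebras recorded in Remark~\ref{rmk:same cohomology}.

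For part (1), the existence of the natural transformation $\pi_y\times \pi_x$ is immediate: Proposition~\ref{prop:surjectivity} already produces each factor as a natural transformation coming from the evaluation morphisms of DG Lie algebras, and one simply takes the product. The substantive content is the assertion that the induced map on $T^1\Def=H^1$ is an isomorphism. Here I would invoke Lemma~\ref{lemma:cohom of qD} applied to $\End(E)$, which carries the hyper-holomorphic connection $[\nabla,-]$ in view of the projective hyper-holomorphicity hypothesis on $\nabla$. The lemma gives
\[
H^1(\qA^{\ast}(\End(E)))\,\cong\, H^1_{\nabla^{0,1}}(A^{0,\ast}(\End(E)))\otimes_{\CC}\CC[x,y]_1,
\]
and since $\CC[x,y]_1=\CC\cdot x\oplus\CC\cdot y$ is two-dimensional the right-hand side splits into two copies of $\Ext^1(E,E)$ (the second copy canonically identified, through Remark~\ref{rmk:same cohomology}, with $H^1(A^{0,\ast}(\End(E)),[\nabla^{0,1}_J,-])$). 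The projection $\pi_y$ annihilates the $x$-summand and $\pi_x$ annihilates the $y$-summand, so their combination is a linear isomorphism on $H^1$, as required.

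For part (2), the key input is the last sentence of Remark~\ref{rmk:same cohomology}: the two associative DG algebras $(A^{0,\ast}(\End(E)),\cdot,[\nabla^{0,1},-])$ and $(A^{0,\ast}(\End(E)),\cdot,[\nabla^{0,1}_J,-])$ are quasi-isomorphic, hence so are the underlying DG Lie algebras under the graded commutator bracket. The homotopy invariance of the deformation functor associated to a DG Lie algebra (cf.\ Section~\ref{section.DefLie}) then produces the required natural isomorphism $\theta$. The only delicate point, and the step where I expect the real work to lie, is to make this quasi-isomorphism sufficiently explicit as a zigzag in the category of DG Lie algebras rather than as an abstract identification in cohomology. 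Inspecting the proof of Theorem~\ref{thm.DGMS}, the common subalgebra $\ker([\nabla^{0,1}_J,-])\subseteq A^{0,\ast}(\End(E))$ (respectively $\ker([\nabla^{0,1},-])$) together with the projection to the cohomology mediates the zigzag; once such a zigzag of DG Lie algebra morphisms is fixed, applying $\Def$ term by term yields $\theta$ as a natural isomorphism of functors $\Art_{\CC}\to\Sets$.
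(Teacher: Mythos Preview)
Your proposal is correct and follows essentially the same route as the paper: part~(1) via Proposition~\ref{prop:surjectivity} for existence and Lemma~\ref{lemma:cohom of qD} for the tangent-space isomorphism, part~(2) via Remark~\ref{rmk:same cohomology} and homotopy invariance. The additional detail you supply---spelling out the $\CC[x,y]_1$-splitting on $H^1$ and making the zigzag through $\ker([\nabla^{0,1}_J,-])$ explicit---is helpful but not a departure from the paper's argument, which leaves these points implicit.
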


\section{Deformations of holomorphic vector bundles}\label{section:moduli spaces holomorphic}


\subsection{Background on deformations of sheaves}\label{section:background}
To any coherent sheaf $\sF$ on a smooth complex projective manifold $X$ it is associated the homotopy class of DG Lie algebras of derived endomorphisms $\RHom(\sF,\sF)$. In particular such class can be described by different quasi-isomorphic representatives (cf. e.g.~\cite{FIM,Me}) but the associated deformation functor $\Def_{\RHom(\sF,\sF)}$ does not depend on this choice, see $\S$~\ref{section.DefLie}.
It is well known that the deformation functor $\Def_{\sF}$ describing infinitesimal deformations of $\sF$ is naturally isomorphic to $\Def_{\RHom(\sF,\sF)}$, see \cite{AS,BZ,FIM, IM, Me}. This turns out to be a very powerful tool since the geometric properties of certain moduli spaces of sheaves on $X$ can be understood, locally around $[\sF]$, by the algebraic properties of $\RHom(\sF,\sF)$. For instance, it has been proven that for any polystable coherent sheaf $\sF$ on $X$ the class of DG Lie algebras $\RHom(\sF,\sF)$ is formal when $X$ is a $K3$ surface (cf.~\cite{BZ}) and even more generally whenever $X$ is a minimal surface of Kodaira dimension $0$ (cf.~\cite{BaMaMe21}). 

For future reference we state here a very useful technical tool, whose proof can be found either in \cite{FMM} or in \cite{FIM}. 

\begin{prop}\label{prop:deformations with dgL}
Let $X$ be a smooth and projective variety and let $(E,\debar_E)$ be a holomorphic vector bundle on $X$. Then the Dolbeault algebra $(A^{0,\ast}(End(E)),[\debar_E,-],[-,-])$ represents the homotopy class $\,\RHom(E,E)$.
\end{prop}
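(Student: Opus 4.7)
The plan is to build an explicit zig-zag of quasi-isomorphisms of associative DG algebras connecting $(A^{0,\ast}(End(E)),[\debar_E,-],\cdot)$ to a standard model for $\RHom(E,E)$. The tool of choice is the Čech--Dolbeault bicomplex attached to a good open cover, which simultaneously sees the Dolbeault resolution of $End(E)$ and a Leray model of $R\Gamma(End(E))$ while carrying a genuine multiplicative structure.

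First I would observe that the complex of sheaves $\mathcal{A}^{0,\ast}(End(E))$, with differential $[\debar_E,-]$, is a resolution of the sheaf $End(E)$: this is the holomorphic Poincaré lemma applied fibrewise, using that $End(E)$ is locally free. Moreover each $\mathcal{A}^{0,q}(End(E))$ is fine (it is a module over the fine sheaf of germs of $C^{\infty}$-functions) and therefore acyclic for $\Gamma(X,-)$. Since $E$ is locally free, $\RHom(E,E)$ is represented by $R\Gamma(End(E))$ as a homotopy class of associative DG algebras, where the multiplication comes from composition in $End(E)$.

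Next I would fix a cover $\mathcal{U}$ of $X$ which is Leray for $End(E)$ and form the Čech--Dolbeault bicomplex $\check{C}^{p}(\mathcal{U},\mathcal{A}^{0,q}(End(E)))$, endowed with the commuting differentials $\check{\delta}$ and $[\debar_E,-]$ and with the product obtained by combining the cup product on Čech cochains, the wedge product on Dolbeault forms, and the fibrewise composition in $End(E)$. Its totalisation $\operatorname{Tot}$ is an associative DG algebra fitting into a diagram of associative DG algebras
\begin{equation*}
\bigl(A^{0,\ast}(End(E)),[\debar_E,-]\bigr) \xleftarrow{\;\epsilon_v\;} \operatorname{Tot} \xrightarrow{\;\epsilon_h\;} \bigl(\check{C}^{\ast}(\mathcal{U},End(E)),\check{\delta}\bigr),
\end{equation*}
in which $\epsilon_v$ is the horizontal edge map (a quasi-isomorphism because the Čech rows are exact by fineness of $\mathcal{A}^{0,q}(End(E))$) and $\epsilon_h$ is the vertical edge map (a quasi-isomorphism by the Dolbeault resolution argument of the first paragraph). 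The rightmost term represents $R\Gamma(End(E))$ as an associative DG algebra because $\mathcal{U}$ was chosen Leray, giving the required identification in the homotopy category.

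The main obstacle I expect is bookkeeping rather than conceptual: one has to verify that the triple product on $\operatorname{Tot}$ is strictly associative, that the two edge maps are morphisms of DG algebras on the nose (not merely $A_\infty$-morphisms), and that the signs coming from the Koszul rule for the three gradings (Čech, Dolbeault, and the internal one coming from $End(E)$) assemble consistently. Once these compatibilities are checked the zig-zag above delivers the statement for associative algebras, and the corresponding statement for DG Lie algebras follows by passing to graded commutators, since the Lie bracket of the Dolbeault model is by definition the graded commutator of $\cdot$.
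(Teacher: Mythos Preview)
The paper does not supply its own proof of this proposition; it merely cites \cite{FMM} and \cite{FIM}. So there is nothing in the paper to compare your argument against directly, and the question is whether your sketch stands on its own and how it relates to those references.

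Your overall strategy---mediating between the Dolbeault model and a \v{C}ech model of $R\Gamma(End(E))$ through the \v{C}ech--Dolbeault bicomplex---is correct, and for a locally free sheaf it is the most economical route. There is, however, a genuine slip in the direction of your arrows. The maps you call ``edge maps'' and draw as
\[
A^{0,\ast}(End(E)) \xleftarrow{\epsilon_v} \operatorname{Tot} \xrightarrow{\epsilon_h} \check{C}^{\ast}(\mathcal{U},End(E))
\]
do not exist as chain-level DG algebra morphisms going \emph{out} of $\operatorname{Tot}$: global sections form the kernel of $\check{\delta}$ on $\check{C}^0$, not a quotient, and likewise $End(E)\subset\mathcal{A}^{0,0}(End(E))$ is the kernel of $[\debar_E,-]$, not a quotient. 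What you actually have is the span
\[
A^{0,\ast}(End(E)) \xrightarrow{\;\iota_v\;} \operatorname{Tot} \xleftarrow{\;\iota_h\;} \check{C}^{\ast}(\mathcal{U},End(E)),
\]
where $\iota_v$ is restriction to the cover (landing in $\ker\check{\delta}\subset\check{C}^0$) and $\iota_h$ is the inclusion of holomorphic cochains (landing in $\ker[\debar_E,-]$). These \emph{inclusions} are strict morphisms of associative DG algebras on the nose, and they are quasi-isomorphisms for exactly the reasons you give (fineness, respectively the Dolbeault lemma). Once you reverse the arrows, your zig-zag works and the bookkeeping you anticipate (strict associativity of the Alexander--Whitney cup product, Koszul signs) is routine.

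For comparison, the references \cite{FIM} and \cite{FMM} treat the more general case of an arbitrary coherent sheaf, where one must first pass to a locally free resolution and then use a Thom--Whitney/cosimplicial totalisation to keep a strict multiplicative structure. Your \v{C}ech approach is the natural simplification available when $E$ is already locally free; it buys you a shorter argument at the cost of generality.
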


As a consequence of $\S$~\ref{section.DefLie} and Proposition~\ref{prop:deformations with dgL} one has the following geometric interpretation, which nowadays is folklore, and for which a good account can be found in the seminal paper \cite{GM90}.
Let us denote by $\Def(E,\debar_E)$ the Kuranishi family of a holomorphic vector bundle $(E,\debar_E)$. 

\begin{cor}\label{cor.formalityimpliesquadraticity}
Keep notations as before.
\begin{enumerate}
\item If $\,(A^{0,\ast}(End(E)),[\debar_E,-],[-,-])$ is Lie formal, then $\Def(E,\debar_E)$ is an intersection of quadrics.
\item If $\,(A^{0,\ast}(End(E)),[\debar_E,-],[-,-])$ is homotopy abelian, then $\Def(E,\debar_E)$ is smooth.
\end{enumerate}
\end{cor}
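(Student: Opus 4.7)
The plan is to identify $\Def(E,\debar_E)$ with the deformation functor $\Def_L$ associated to the Dolbeault DG Lie algebra $L=(A^{0,\ast}(End(E)),[\debar_E,-],[-,-])$ and then transport formality (or homotopy abelianity) through a quasi-isomorphism in order to read off the local structure. The identification $\Def(E,\debar_E)\cong\Def_L$ is exactly the combination of Proposition~\ref{prop:deformations with dgL} (which asserts that $L$ represents the homotopy class $\RHom(E,E)$) with the fact recalled in $\S$~\ref{section.DefLie} that the Kuranishi deformation functor of $(E,\debar_E)$ coincides with $\Def_{\RHom(E,E)}$. Once this identification is in place, everything is reduced to an algebraic statement about $\Def_L$.

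For item (1), assume $L$ is Lie formal, so there is a chain of quasi-isomorphisms of DG Lie algebras connecting $L$ to its cohomology $H^{\ast}(L)=\Ext^{\ast}(E,E)$ endowed with the trivial differential and the induced (Yoneda) bracket. By the homotopy invariance of the Maurer--Cartan functor recalled in $\S$~\ref{section.DefLie}, this gives a natural isomorphism $\Def_L\cong\Def_{H^{\ast}(L)}$. Since the differential on $H^{\ast}(L)$ vanishes, the Maurer--Cartan equation on $H^1(L)\otimes\mathfrak{m}_B$ reduces to the purely quadratic condition $[x,x]=0$, so
\[ \Def(E,\debar_E)(B)\;\cong\;\bigl\{x\in H^1(L)\otimes\mathfrak{m}_B\;\vert\;[x,x]=0\bigr\}/\sim, \]
exhibiting the Kuranishi family as an intersection of quadrics in $H^1(L)=\Ext^1(E,E)$ (modulo the exponential adjoint gauge action described in $\S$~\ref{section.DefLie}).

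For item (2), recall from Remark~\ref{rmk.formalityVShomotopyabelianity} that $L$ is homotopy abelian if and only if it is Lie formal and the induced bracket on cohomology is trivial. Applying the same homotopy-invariance argument as above, $\Def_L$ is naturally isomorphic to $\Def_M$ for an abelian DG Lie algebra $M$ in the same quasi-isomorphism class, for which the Maurer--Cartan equation becomes purely linear. Hence $\Def(E,\debar_E)(B)\cong H^1(L)\otimes\mathfrak{m}_B$ as a functor on $\Art_{\CC}$, and this is a smooth (in fact pro-representable by a power series ring) functor. The only mildly delicate point is the bookkeeping of the gauge action across the quasi-isomorphism, but in the homotopy abelian case the gauge action is the standard exponential action and the conclusion is an immediate consequence of the equivalence $\Def_L\cong\Def_M$.
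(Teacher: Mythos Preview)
Your proposal is correct and follows precisely the line of reasoning the paper indicates: the corollary is stated there without an explicit proof, only as ``a consequence of $\S$~\ref{section.DefLie} and Proposition~\ref{prop:deformations with dgL}'' with a reference to~\cite{GM90}, and your argument is exactly the standard unpacking of those ingredients. The only minor remark is that the identification $\Def_{(E,\debar_E)}\cong\Def_{\RHom(E,E)}$ is recalled in $\S$~\ref{section:background} rather than in $\S$~\ref{section.DefLie}.
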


With the above result in mind, the aim of $\S$~\ref{section.formalityhyperholomorphic} will be to provide sufficient conditions for
\[ \left(A^{0,\ast}(End(E)),[\debar_E,-],[-,-]\right) \]
to be associatively (hence Lie) formal.
Notice that the implications of Corollary~\ref{cor.formalityimpliesquadraticity} are strong, indeed a priori there is no reason to expect the above conditions to be equivalent. In particular homotopy abelianity is stronger than smoothness. Nevertheless, if $(E,\debar_E)$ is polystable with respect to any polarisation and $X$ is a smooth projective surface it has been proved in \cite{BaMaMe22} that formality is equivalent to quadraticity (in fact, it has been proven a more general statement for coherent polystable sheaves satisfying certain conditions on a complex projective scheme of arbitrary dimension, cf.~\cite[Theorem 1.2]{BaMaMe22}).

\subsection{Formality results for vector bundles with a projectively hyper-holomorphic connections}\label{section.formalityhyperholomorphic}

Let $X=(M,I)$ be an irreducible holomorphic symplectic manifold with K\"ahler class $\omega$, and $(M,g,I,J,K)$ the associated hyper-K\"ahler structure.
Let $E$ be a complex vector bundle endowed with a hyper-holomorphic connection $\nabla=\nabla^{1,0}+\nabla^{0,1}$. Recall that in particular $\nabla^{0,1}$ is a holomorphic structure on $E$.
As in Section $\S$~\ref{section:def of hyper-hol connections} (cf.Remark~\ref{rmk.qDdoublecomplex}) we shall denote by \[ \nabla^{0,1}_J\colon A^{0,\ast}(E)\rightarrow A^{0,\ast+1}(E) \;  \]
the associated twisted differential.

Motivated by the deformation theory of a holomorphic vector bundle $(E,\nabla^{0,1})$, we now look at its endomorphism bundle.
Recall that the Chern connection $\nabla$ on an hermitian vector bundle $E$ is called projectively hyper-holomorphic if the induced connection on $End(E)$ is hyper-holomorphic.

\begin{thm}\label{thm:formality of D}\label{thm:End are DGMS}
Consider a complex vector bundle $E$ endowed with a projectively hyper-holomorphic connection $\nabla$. Then:
\begin{enumerate}
    \item $\left(A^{0,\ast}(End(E))\,,\,\cdot\,,\,[\nabla^{0,1}_J,-]\,,\,[\nabla^{0,1},-]\right)$ is an associative DGMS algebra,
    \item $\left( A^{0,\ast}(End(E))\,,\,\cdot\,,\, [\nabla^{0,1},-] \right)$ is a formal associative DG algebra.
\end{enumerate}
\begin{proof}
By Lemma~\ref{lemma:debardebarJlemma} the Dolbeault complex is a DGMS vector space. Hence it is sufficient to show that the differentials act as derivations with respect to the natural associative product of $A^{0,\ast}(End(E))$, which follows by Proposition~\ref{prop.nablaJderivation}. The last statement is then an immediate consequence of Theorem~\ref{thm.DGMS}.
\end{proof}
\end{thm}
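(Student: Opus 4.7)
The statement is an assembly of tools prepared earlier in the paper, so the proof will be short. The plan is to verify the two bullets of Definition~\ref{def.DGMSalgebra} for the pair of differentials $[\nabla^{0,1},-]$ and $[\nabla^{0,1}_J,-]$ acting on $A^{0,\ast}(End(E))$, and then invoke the abstract formality result Theorem~\ref{thm.DGMS}.

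First I would unravel the hypothesis. Since $\nabla$ is projectively hyper-holomorphic, by definition the induced connection $[\nabla,-]$ on $End(E)$ is hyper-holomorphic; concretely, Remark~\ref{rmk.End(E)connection} gives $\widetilde{\nabla}=[\nabla,-]$, so the $(0,1)$-part of the connection on $End(E)$ is precisely $[\nabla^{0,1},-]$ and its $J$-twist is $[\nabla^{0,1}_J,-]$. Applying Lemma~\ref{lemma:debardebarJlemma} to the hyper-holomorphic connection on $End(E)$ immediately tells us that the triple
\[
\left(A^{0,\ast}(End(E)),\,[\nabla^{0,1}_J,-],\,[\nabla^{0,1},-]\right)
\]
is a DGMS vector space in the sense of Definition~\ref{def.DGMS}: the two operators have square zero, anticommute (Proposition~\ref{prop:hyper-hol as MC}), and the strong $d_0d_1$-lemma holds.

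Next I would promote this DGMS vector space to an associative DGMS algebra. The candidate product is the natural composition product \eqref{eqn:product on End} on $A^{0,\ast}(End(E))$. The only thing left to check is that each of the two differentials acts as a graded derivation with respect to this product; this is exactly the content of Proposition~\ref{prop.nablaJderivation}. Combining these two ingredients establishes item (1).

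Item (2) is then immediate: applying Theorem~\ref{thm.DGMS} (abstract formality for associative DGMS algebras) to the structure produced above yields that $\left(A^{0,\ast}(End(E)),\cdot,[\nabla^{0,1},-]\right)$ is quasi-isomorphic as an associative DG algebra to its own cohomology, i.e.\ it is formal. There is essentially no obstacle here, because all the analytic work (the strong $\nabla^{0,1}\nabla^{0,1}_J$-lemma, the Nakano--Kodaira identities on the $\SU(2)$-quotient, the derivation property of the $J$-twist) has already been absorbed into the earlier statements; the present theorem is just the plug-in of those results into the abstract machine of Section~\ref{section:associative algebras}.
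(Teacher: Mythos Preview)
Your proof is correct and follows essentially the same route as the paper's: invoke Lemma~\ref{lemma:debardebarJlemma} for the DGMS vector space structure on $A^{0,\ast}(End(E))$, upgrade it to an associative DGMS algebra via Proposition~\ref{prop.nablaJderivation}, and conclude formality by Theorem~\ref{thm.DGMS}. The only differences are cosmetic --- you spell out the unraveling of ``projectively hyper-holomorphic'' and cite Proposition~\ref{prop:hyper-hol as MC} for the anticommutation, which the paper leaves implicit in the appeal to Lemma~\ref{lemma:debardebarJlemma}.
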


\begin{prop}[\protect{\cite[Theorem~11.1]{Verbitsky:JAG1996}}]\label{prop:verbitsky Delta}
Let $(E,\debar_E)$ be a stable holomorphic vector bundle on $X$ and consider the hermitian Yang--Mills metric induced by the Donaldson--Yau--Uhlenbeck theorem. Then the Chern connection on $E$ is projectively hyper-holomorphic if and only if 
$$\Delta(E)\defeq 2rc_2(E)-(r-1)c_1(E)^2$$
is $\SU(2)$-invariant.
\end{prop}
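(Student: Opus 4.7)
The strategy is to reduce the question about the connection $[\nabla,-]$ on $End(E)$ to a pointwise $\SU(2)$-invariance statement about the trace-free curvature of $\nabla$, and then to match this via Chern--Weil theory with the cohomological invariant $\Delta(E)$.

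First, I decompose $End(E)=End_0(E)\oplus \cO_X\cdot \operatorname{Id}$ into the trace-free endomorphisms and the scalars. Writing $R=\nabla^2\in A^{1,1}(End(E))$ and decomposing $R=R_0+\tfrac{1}{r}\operatorname{tr}(R)\operatorname{Id}$, the induced curvature on $End(E)$ is $[R,-]=[R_0,-]$, since $\operatorname{Id}$ acts trivially. Hence $\nabla$ is projectively hyper-holomorphic if and only if $R_0\in A^{1,1}(End_0(E))$ is pointwise $\SU(2)$-invariant (the $\SU(2)$-action being on the form part only). Next, using standard Chern--Weil representatives, $\operatorname{tr}(R)$ represents a non-zero multiple of $c_1(E)$ and $\operatorname{tr}(R\wedge R)$ a multiple of $c_1(E)^2-2c_2(E)$; the identity $\operatorname{tr}(R\wedge R)=\operatorname{tr}(R_0\wedge R_0)+\tfrac{1}{r}\operatorname{tr}(R)\wedge\operatorname{tr}(R)$ then shows that, up to a non-zero constant, $\operatorname{tr}(R_0\wedge R_0)$ is a de Rham representative of $\Delta(E)$.

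For the forward direction, if $R_0$ is pointwise $\SU(2)$-invariant then $R_0\wedge R_0$ is $\SU(2)$-invariant, hence so is $\operatorname{tr}(R_0\wedge R_0)$; by Proposition~\ref{prop:action of SU}(2) its cohomology class $\Delta(E)$ is $\SU(2)$-invariant.

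The substantive content is the converse, and this is where I expect the main obstacle. The approach is to decompose $R_0=R_0^{\mathrm{inv}}+R_0^{\mathrm{res}}$, with $R_0^{\mathrm{inv}}$ the $\SU(2)$-invariant part and $R_0^{\mathrm{res}}$ lying in the non-trivial $\SU(2)$-isotypical components of $\Lambda^2T^*_\RR X\otimes End_0(E)$. Because the Chern connection is hermitian Yang--Mills (so $R_0$ is $\omega$-primitive of type $(1,1)$) and $End_0(E)$ carries the induced Hermitian--Einstein structure, one can invoke an integral identity on the hyper-K\"ahler manifold expressing
\[ \int_X \operatorname{tr}(R_0\wedge R_0)\wedge \omega^{2n-2} = c_+\bigl\|R_0^{\mathrm{inv}}\bigr\|_{L^2}^2 - c_-\bigl\|R_0^{\mathrm{res}}\bigr\|_{L^2}^2 \]
with $c_\pm>0$, analogous to the Kobayashi--L\"ubke--Bogomolov inequality refined by the $\SU(2)$-decomposition. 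The hypothesis that $\Delta(E)$ is $\SU(2)$-invariant allows the same identity to be run against any K\"ahler class in the twistor sphere (since for each induced complex structure $L$ the primitivity condition is the same once $\SU(2)$-invariance is available), producing enough linear constraints to force $\|R_0^{\mathrm{res}}\|_{L^2}=0$, and hence $R_0^{\mathrm{res}}\equiv 0$.

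The hardest step is establishing the pointwise algebraic identity underlying the $L^2$-formula above: this requires a careful use of the $\SU(2)$-isotypical decomposition of primitive $(1,1)$-forms on a hyper-K\"ahler manifold together with a Weitzenb\"ock-type identity for the Yang--Mills curvature. The rest of the argument is comparatively formal once that identity is in place.
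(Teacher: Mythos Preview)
The paper does not supply its own proof of this proposition: it is stated with a citation to \cite[Theorem~11.1]{Verbitsky:JAG1996} and immediately followed by examples, with no proof environment. So there is nothing in the paper to compare your proposal against.

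That said, your outline is a reasonable sketch of how Verbitsky's argument actually runs, with one imprecision worth flagging. The reduction to the trace-free curvature $R_0$ and the Chern--Weil identification $\operatorname{tr}(R_0\wedge R_0)\sim\Delta(E)$ (equivalently $c_2(End(E))=\Delta(E)$, $c_1(End(E))=0$) are correct, as is the forward direction. For the converse, however, the single integral identity you write, with coefficients $c_+$ and $c_-$ against the fixed $\omega^{2n-2}$, is not quite the mechanism. What one actually uses is the comparison of $\int_X\operatorname{tr}(R_0\wedge R_0)\wedge\omega_L^{2n-2}$ for different induced complex structures $L$. Since $R_0$ is $(1,1)$-primitive for $I$ (by Hermitian Yang--Mills), the Hodge--Riemann relations give $\int\operatorname{tr}(R_0\wedge R_0)\wedge\omega_I^{2n-2}$ as a definite multiple of $\|R_0\|^2$. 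For $L=J$, one first checks $R_0$ is automatically $\omega_J$-primitive (because $\omega_J$ is of type $(2,0)+(0,2)$ for $I$), then splits $R_0$ into its $(1,1)_J$ and $(2,0)_J+(0,2)_J$ parts; Hodge--Riemann now gives the two pieces opposite signs. The hypothesis that $\Delta(E)$ is $\SU(2)$-invariant forces the two integrals to agree, and equating them kills the $(2,0)_J+(0,2)_J$ component. Repeating with $K$ shows $R_0$ is $(1,1)$ for all induced complex structures, hence $\SU(2)$-invariant. Your parenthetical ``the primitivity condition is the same once $\SU(2)$-invariance is available'' risks sounding circular; the point is that $\omega_J$-primitivity of $R_0$ comes for free from the bidegree, not from any hyper-holomorphicity assumption.
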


\begin{example}
The Chern connection of any line bundle is projectively hyper-holomorphic.
\end{example}

\begin{example}\label{example:K3}
Let $X$ a K3 surface, and $E$ be a stable holomorphic vector bundle. The Yang--Mills Chern connection on $E$ provided by Donaldson--Uhlenbeck--Yau theorem is projectively hyper-holomorphic. Essentially this is due to the fact that in dimension $2$ any $2$-form $\alpha$ such that $\Lambda_\omega\alpha=0$ is $\SU(2)$-invariant, see~\cite[Proposition~11.2]{Verbitsky:JAG1996}.
\end{example}

\begin{rmk}
Hermitian holomorphic vector bundles on $X$ whose Chern connection is projectively hyper-holomorphic are examples of modular vector bundles, see~\cite[Definition~1.1]{O'Grady:Modular}. As already remarked by O'Grady~\cite[Remark~1.3]{O'Grady:Modular}, the two definitions agree when $X$ is deformation equivalent to a Hilbert scheme of $2$ points on a K3 surface, or when $X$ is deformation equivalent to the so-called OG10 manifold. Nevertheless it is not known in general whether this is always the case or not: in particular, there are no examples of stable modular vector bundles that do not admit projectively hyper-holomorphic connections.
\end{rmk}

\begin{thm}\label{thm.Dolbeaultformal}
Let $X$ be an irreducible holomorphic symplectic manifold $X$ and $\omega$ a K\"ahler class on $X$. If $(E,\debar_E)$ is a stable holomorphic vector bundle on $X$ such that $\Delta(E)$ is $\SU(2)$-invariant, then the associative Dolbeault DG algebra $\left(A^{0,\ast}(End(E))\,,\,\cdot\,,\,[\debar_E,-]\right)$ is formal.
\begin{proof}
By Proposition~\ref{prop:verbitsky Delta}, the hypotheses imply that $E$ admits a projectively hyper-holomorphic connection. Then the associative formality of the Dolbeault DG algebra follows by Theorem~\ref{thm:End are DGMS}.
\end{proof}
\end{thm}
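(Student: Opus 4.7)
The plan is to assemble the final formality statement directly from the machinery already set up in the paper, so the ``proof'' reduces to checking that the geometric hypothesis $\SU(2)$-invariance of $\Delta(E)$ feeds into the abstract DGMS formality theorem via the intermediate notion of a projectively hyper-holomorphic connection.

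First I would produce a connection on $E$ of the right type. Since $(E,\debar_E)$ is stable, the Donaldson--Uhlenbeck--Yau theorem provides a (unique) hermitian Yang--Mills metric on $E$, and hence a distinguished Chern connection $\nabla$ compatible with $\debar_E$. Invoking Proposition~\ref{prop:verbitsky Delta}, the hypothesis that the discriminant $\Delta(E)=2r c_2(E)-(r-1)c_1(E)^2$ is $\SU(2)$-invariant is precisely equivalent to $\nabla$ being projectively hyper-holomorphic, i.e.\ the induced connection $[\nabla,-]$ on $End(E)$ is hyper-holomorphic (autodual and metric) with respect to the fixed hyper-K\"ahler structure determined by $\omega$.

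At this point, I would directly apply Theorem~\ref{thm:End are DGMS} to $End(E)$ equipped with the hyper-holomorphic connection $[\nabla,-]$. Its statement delivers exactly what we want: the Dolbeault DG algebra
\[ \left(A^{0,\ast}(End(E))\,,\,\cdot\,,\,[\debar_E,-]\right) \]
is associatively formal, since $\nabla^{0,1}$ coincides with $\debar_E$ by compatibility of the Chern connection with the holomorphic structure. This concludes the proof.

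The substantive work is not in Theorem~\ref{thm.Dolbeaultformal} itself but in the input results: the main obstacle (already resolved earlier) is the DGMS structure on $A^{0,\ast}(End(E))$. That in turn relies on two non-trivial inputs: Lemma~\ref{lemma:debardebarJlemma} (the strong $\nabla^{0,1}\nabla^{0,1}_J$-lemma, Verbitsky's hyper-K\"ahler analog of the classical $\partial\bar\partial$-lemma) and Proposition~\ref{prop.nablaJderivation} (that $[\nabla^{0,1}_J,-]$ is a derivation of the associative product, obtained by conjugating the $(1,0)$-derivation property through the multiplicative $\SU(2)$-action). Once these are in hand, Theorem~\ref{thm.DGMS} on abstract DGMS algebras yields formality, and the present theorem is a direct corollary via Verbitsky's numerical criterion.
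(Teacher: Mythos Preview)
Your proposal is correct and follows essentially the same route as the paper's own proof: invoke Proposition~\ref{prop:verbitsky Delta} to obtain a projectively hyper-holomorphic connection from stability plus $\SU(2)$-invariance of $\Delta(E)$, then apply Theorem~\ref{thm:End are DGMS}. Your additional unpacking of the ingredients behind Theorem~\ref{thm:End are DGMS} (Lemma~\ref{lemma:debardebarJlemma}, Proposition~\ref{prop.nablaJderivation}, Theorem~\ref{thm.DGMS}) is accurate but already accounted for upstream.
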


By Remark~\ref{rmk:same cohomology} we get that $\oH^{\bullet}_{[\debar_E,-]}\left(A^{0,\ast}(End(E))\right) \cong \oH^{\bullet}_{[\nabla^{0,1}_J,-]}\left(A^{0,\ast}(End(E))\right)$ . This should be compared to \cite[Corollary~8.2]{Verbitsky:JAG1996}, where it is shown that the vector spaces $\oH^k(End(E))$ do not depend on the induced complex structure, when $E$ is a projectively hyper-holomorphic hermitian vector bundle.

\begin{rmk}[Polystable vector bundles]
If $E=\bigoplus_{k=1}^mE_k$ is a polystable vector bundle, i.e.\ each $E_k$ is stable, then the Donaldson--Uhlenbeck--Yau Yang--Mills Chern connection $\nabla=\sum_{k=1}^m\nabla_k$ can be projectively hyper-holomorphic even if the single connections $(E_k,\nabla_k)$ are not. In this case Theorem~\ref{thm:End are DGMS} still applies, but Proposition~\ref{prop:verbitsky Delta} does not a priori. In fact the $\SU(2)$-invariance of $\Delta(E)$ does not imply the $\SU(2)$-invariance of each $\Delta(E_k)$ and we cannot conclude that the Donaldson--Uhlenbeck--Yau Yang--Mills Chern connection $\nabla$ is projectively hyper-holomorphic. In fact, for this to be true, one has to further assume that each summand $\nabla_k$ is so.
\end{rmk}

In the following we state some geometric consequences of Theorem~\ref{thm.Dolbeaultformal}. If $(E,\debar_E)$ is a holomorphic vector bundle, we denote by $\Def(E,\debar_E)$ the Kuranishi space of infinitesimal holomorphic deformations of $E$ (cf.\ \cite{GM90}).

The first consequence is a shorter proof of a result due to Verbitsky, ~\cite[Theorem~11.2]{Verbitsky:JAG1996}.

\begin{cor}[Verbitsky's quadraticity theorem]\label{cor.formalityVSquadraticity}
Let $X$ be an irreducible holomorphic symplectic manifold $X$ and $\omega$ a K\"ahler class on $X$. If $(E,\debar_E)$ is a stable holomorphic vector bundle that admits a projectively hyper-holomorphic connection, then $\Def(E,\debar_E)$ is an intersection of quadrics.
\begin{proof}
This is a direct consequence of the formality of Theorem~\ref{thm:End are DGMS} and Corollary~\ref{cor.formalityimpliesquadraticity}.
\end{proof}
\end{cor}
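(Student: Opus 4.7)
The plan is to chain together the formality statement proved in this paper with standard facts from deformation theory recalled in the preceding subsections, so that the corollary becomes essentially a one-line deduction. The key point to emphasize is how the projective hyper-holomorphicity hypothesis on the connection is transferred into an algebraic property (formality) of the DG algebra controlling the deformation problem, and how that algebraic property forces the Kuranishi space to be cut out by quadrics.

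First, I would invoke Theorem~\ref{thm:End are DGMS}: since $E$ admits a projectively hyper-holomorphic connection $\nabla$, the associated connection $[\nabla,-]$ on $End(E)$ is hyper-holomorphic, and therefore the Dolbeault DG algebra
\[ \left(A^{0,\ast}(End(E))\,,\,\cdot\,,\,[\debar_E,-]\right) \]
is associatively formal. In particular, passing to the graded commutator bracket, the underlying DG Lie algebra is Lie formal as well (cf.\ Remark~\ref{rmk.Lieformality}).

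Next, by Proposition~\ref{prop:deformations with dgL}, this Dolbeault DG Lie algebra is a representative of the homotopy class $\RHom(E,E)$, so by the general principle recalled in $\S$\ref{section:background} it controls the deformation functor of $(E,\debar_E)$; equivalently, it governs the Kuranishi space $\Def(E,\debar_E)$. Formality of the DG Lie algebra is a homotopy-invariant property and so descends to this Kuranishi problem.

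Finally, I would conclude by applying Corollary~\ref{cor.formalityimpliesquadraticity}(1): Lie formality of a DG Lie algebra controlling an infinitesimal deformation problem implies that the associated Kuranishi space is an intersection of quadrics, the quadrics being cut out by the quadratic obstruction map induced by the bracket on $\oH^1\bigl(A^{0,\ast}(End(E)),[\debar_E,-]\bigr)=\Ext^1(E,E)$ with values in $\Ext^2(E,E)$. There is no real obstacle here since all the hard work has already been done in Theorem~\ref{thm:End are DGMS}; the only thing to check is that the hypotheses of Corollary~\ref{cor.formalityimpliesquadraticity} are met, which is immediate. Note in particular that stability of $E$ is used only indirectly, to guarantee (via Donaldson--Uhlenbeck--Yau) that the projectively hyper-holomorphic connection is the Chern connection of an actual Yang--Mills metric and therefore that Theorem~\ref{thm:End are DGMS} applies.
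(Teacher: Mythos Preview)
Your proof is correct and follows essentially the same route as the paper's: invoke Theorem~\ref{thm:End are DGMS} to obtain formality of the Dolbeault DG algebra, then apply Corollary~\ref{cor.formalityimpliesquadraticity}(1). The intermediate appeal to Proposition~\ref{prop:deformations with dgL} is implicit in the paper's use of Corollary~\ref{cor.formalityimpliesquadraticity}, so you are just making explicit what the paper leaves tacit.

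One small inaccuracy in your closing comment: by the paper's definition, a projectively hyper-holomorphic connection is \emph{already} a Chern connection (the definition reads ``if a Chern connection $\nabla$ on $E$ induces a hyper-holomorphic connection $[\nabla,-]$ on $End(E)$, then $\nabla$ is called projectively hyper-holomorphic''). So there is no need to invoke Donaldson--Uhlenbeck--Yau to ensure Theorem~\ref{thm:End are DGMS} applies; the hypothesis of the corollary feeds directly into it. The stability assumption is in fact not used in the proof at all---it appears in the statement presumably because that is the setting of Verbitsky's original result and because stability together with $\SU(2)$-invariance of $\Delta(E)$ is what guarantees the \emph{existence} of such a connection (Proposition~\ref{prop:verbitsky Delta}).
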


\begin{rmk}
Verbitsky's proof of quadraticity of $\Def(E,\debar_E)$ is very technical and heavily uses the differential and Hodge theory of $A^{0,\ast}(End(E))$. Nevertheless his main tool can be tracked back to the $\nabla^{0,1}\,\nabla^{0,1}_J$-lemma, which is also our main tool to prove formality. 
\end{rmk}

Another consequence is the following well known result (see~\cite{Kim}).

\begin{cor}\label{cor.Kim}
Let $X$ be an irreducible holomorphic symplectic manifold $X$ and $\omega$ a K\"ahler class on $X$. Let $(E,\debar_E)$ be a stable holomorphic vector bundle that admits a projectively hyper-holomorphic connection. If moreover $\Ext^2(E,E)=\CC$, then $\Def(E,\debar_E)$ is smooth.
\begin{proof}
First of all,  by Theorem~\ref{thm:End are DGMS} the algebra $\left(A^{0,\ast}(End(E))\,,\,\cdot\,,\,[\debar_E,-]\right)$ is formal. Moreover, recall that $\oH^{\bullet}(A^{0,\ast}(End(E))=\Ext^{\bullet}(E,E)$. By assumption $\Ext^2(E,E)=\CC$, so that in particular the cap product defined on $\Ext^1(E,E)=\oH_{\debar}^1(A^{0,\ast}(End(E)))$ is skew-symmetric and $\Def(E,\debar_E)$ is smooth (cf. $\S$~\ref{section.DefLie}).
\end{proof}
\end{cor}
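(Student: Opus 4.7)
The plan is to combine the associative formality of Theorem~\ref{thm:End are DGMS} with a pairing-vanishing argument on $\Ext^1(E,E)$, and then invoke the smoothness criterion of Corollary~\ref{cor.formalityimpliesquadraticity}(2).

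First, Theorem~\ref{thm:End are DGMS} tells us that the associative Dolbeault DG algebra $\bigl(A^{0,\ast}(End(E)),\cdot,[\debar_E,-]\bigr)$ is formal, hence Lie formal (Remark~\ref{rmk.Lieformality}). Combining Proposition~\ref{prop:deformations with dgL} with the homotopy invariance of deformation functors ($\S$~\ref{section.DefLie}), this reduces the study of $\Def(E,\debar_E)$ to the deformation functor of $\Ext^{\bullet}(E,E)$, viewed as a DG Lie algebra with trivial differential and bracket induced by the graded commutator of the Yoneda product.

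Next I would use the hypothesis $\Ext^2(E,E)=\CC$ to show that the induced bracket vanishes on $\Ext^1(E,E)$. Since $X$ is irreducible holomorphic symplectic, $H^2(X,\mathcal{O}_X)=\CC$, and the trace splitting $\Ext^{\bullet}(E,E)\cong H^{\bullet}(X,\mathcal{O}_X)\oplus\Ext^{\bullet}_0(E,E)$ combined with the dimension assumption forces $\Ext^2_0(E,E)=0$, so that the trace map $\Tr\colon\Ext^2(E,E)\to H^2(X,\mathcal{O}_X)$ is an isomorphism. The standard supertrace identity $\Tr(\alpha\cup\beta)=(-1)^{|\alpha||\beta|}\Tr(\beta\cup\alpha)$ then shows that the degree-one pairing $\Ext^1(E,E)\otimes\Ext^1(E,E)\to\Ext^2(E,E)=\CC$ induced by $\cup$ is skew-symmetric; combined with the formula $[\alpha,\beta]=\alpha\cup\beta+\beta\cup\alpha$ for the induced bracket on degree-one elements, one gets $\alpha\cup\alpha = 0$ for every $\alpha$, and by polarization $[\alpha,\beta]=0$ on all of $\Ext^1(E,E)$.

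At this point the DG Lie algebra controlling deformations is formal with vanishing induced bracket in cohomology, so homotopy abelian by Remark~\ref{rmk.formalityVShomotopyabelianity}, and Corollary~\ref{cor.formalityimpliesquadraticity}(2) gives smoothness of $\Def(E,\debar_E)$. The main obstacle I anticipate is cleanly justifying the skew-symmetry of the Yoneda pairing: both the trace decomposition of $\Ext^{\bullet}(E,E)$ and the supertrace identity are standard, but the precise way the hypothesis $\Ext^2(E,E)=\CC$ collapses the pairing onto $H^2(X,\mathcal{O}_X)$ and forces $\alpha\cup\alpha=0$ must be handled carefully.
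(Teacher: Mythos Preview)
Your approach matches the paper's: invoke formality from Theorem~\ref{thm:End are DGMS}, then argue that the Yoneda pairing $\Ext^1(E,E)\times\Ext^1(E,E)\to\Ext^2(E,E)$ is skew-symmetric, so the Maurer--Cartan obstruction $[\alpha,\alpha]$ vanishes. Your trace argument for skew-symmetry is a correct and welcome expansion of what the paper leaves implicit.

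There is one genuine overclaim. You conclude that the DG Lie algebra is \emph{homotopy abelian} and then cite Corollary~\ref{cor.formalityimpliesquadraticity}(2). But Remark~\ref{rmk.formalityVShomotopyabelianity} requires the induced bracket to vanish on \emph{all} of cohomology, whereas you have only shown it vanishes on $\Ext^1\times\Ext^1$. The brackets $\Ext^i\times\Ext^j\to\Ext^{i+j}$ for $(i,j)\neq(1,1)$ have no reason to vanish under your hypotheses, and indeed the paper explicitly remarks after this corollary that the DG Lie algebra need \emph{not} be homotopy abelian in general. So the appeal to Corollary~\ref{cor.formalityimpliesquadraticity}(2) is not justified.

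The fix is immediate: bypass homotopy abelianity entirely and conclude directly from the description in $\S$~\ref{section.DefLie}. Formality gives $\Def_{(E,\debar_E)}\cong\Def_{\Ext^{\bullet}(E,E)}$, and for a DG Lie algebra with trivial differential the deformation functor is the quadratic cone $\{b\in H^1\otimes\mathfrak{m}_B : [b,b]=0\}$ modulo the adjoint action. Since you have shown $[b,b]=0$ for every $b\in\Ext^1(E,E)$, this cone is all of $\Ext^1(E,E)\otimes\mathfrak{m}_B$ and the functor is unobstructed, hence $\Def(E,\debar_E)$ is smooth. This is exactly how the paper closes the argument.
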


Notice that in Corollary~\ref{cor.Kim}, the formal DG Lie algebra $A^{0,\ast}(End(E))$ need not to be homotopy abelian in general.

\begin{rmk}[Moduli space of semistable vector bundles on K3 surfaces]
We already noticed in Example~\ref{example:K3} that every polystable vector bundle on a K3 surface admits a hyper-holomorphic connection. Moreover, if $(E,\debar_E)$ is stable then $\Ext^2(E,E)\cong\Hom(E,E)=\CC$.
Hence we recover the well known fact that the moduli space of semistable holomorphic vector bundles on a K3 surface is quadratic \cite{BZ,BaMaMe22}, and it is smooth at those points that can be represented by a stable holomorphic vector bundle.
\end{rmk}

\begin{cor}\label{cor:3}
Let $X$ be an irreducible holomorphic symplectic manifold $X$ and $\omega$ a K\"ahler class on $X$. Let $(E,\debar_E)$ be a stable holomorphic vector bundle that admits a projectively hyper-holomorphic connection. If moreover there exists a smooth projective variety $Z$ such that $\Ext^{\ast}(E,E)\cong \oH^{\ast}(Z,\CC)$, then
\[ \left(A^{0,\ast}(End(E))\,,\,[\debar_E,-]\,,\,[-,-]\right) \]
is a homotopy abelian DG Lie algebra. In particular $\Def(E,\debar_E)$ is smooth.
\begin{proof}
By Theorem~\ref{thm.Dolbeaultformal}, the associative Dolbeault DG algebra $\left(A^{0,\ast}(End(E))\,,\,\cdot\,,\,[\debar_E,-]\right)$ is formal. Hence to prove homotopy abelianity is enough to show that the associative product is graded commutative in cohomology, see Remark~\ref{rmk.formalityVShomotopyabelianity}. On the other hand, the associative cap product on $\oH^{\ast}(Z,\CC)$ is graded commutative so that the claim follows by a standard Eckmann-Hilton argument,~\cite{EH}.
\end{proof}
\end{cor}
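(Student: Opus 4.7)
The first move is to invoke Theorem~\ref{thm.Dolbeaultformal} (via Theorem~\ref{thm:End are DGMS}), which applies because $E$ is assumed to carry a projectively hyper-holomorphic connection $\nabla$. This tells us that the associative Dolbeault DG algebra
\[ \left(A^{0,\ast}(End(E))\,,\,\cdot\,,\,[\debar_E,-]\right) \]
is formal. In particular, taking graded commutators, the underlying DG Lie algebra $(A^{0,\ast}(End(E)),[\debar_E,-],[-,-])$ is Lie formal.

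By Remark~\ref{rmk.formalityVShomotopyabelianity}, to upgrade Lie formality to homotopy abelianity it suffices to verify that the Lie bracket induced on cohomology is trivial. Since the bracket on $A^{0,\ast}(End(E))$ is the graded commutator of the associative product, the induced bracket on $\Ext^{\ast}(E,E)=\oH^{\ast}(A^{0,\ast}(End(E)))$ vanishes precisely when the induced associative Yoneda-type product on $\Ext^{\ast}(E,E)$ is graded commutative. So the whole question is reduced to checking graded commutativity of this product.

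This is where the hypothesis enters: the assumed isomorphism $\Ext^{\ast}(E,E)\cong\oH^{\ast}(Z,\CC)$ identifies two associative graded algebra structures on the same underlying graded vector space, one being the transferred Yoneda product and the other being the cup product of the smooth projective variety $Z$, which is automatically graded commutative. The standard Eckmann--Hilton argument (as in~\cite{EH}) shows that once one has two such unital, associative, compatible products on a graded piece, they must agree and be graded commutative. Applying this here forces the Yoneda product on $\Ext^{\ast}(E,E)$ to be graded commutative, hence the induced bracket on cohomology is trivial and $(A^{0,\ast}(End(E)),[\debar_E,-],[-,-])$ is homotopy abelian.

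Finally, the smoothness of the Kuranishi family $\Def(E,\debar_E)$ is a direct consequence: by the discussion in $\S$~\ref{section.DefLie} (see also Corollary~\ref{cor.formalityimpliesquadraticity}(2)), homotopy abelianity of the DG Lie algebra controlling $\Def(E,\debar_E)$ implies that $\Def_{A^{0,\ast}(End(E))}$ is unobstructed, hence $\Def(E,\debar_E)$ is smooth. The only non-routine point in the whole plan is ensuring that the assumed isomorphism of $\Ext$-algebras with the cohomology of $Z$ is really available at the level of graded associative algebras (and not merely as graded vector spaces), so that Eckmann--Hilton applies; once this is granted the rest is immediate from the already-proved formality statement.
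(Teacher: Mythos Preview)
Your proposal is correct and follows essentially the same route as the paper: invoke associative formality from Theorem~\ref{thm.Dolbeaultformal}, reduce homotopy abelianity via Remark~\ref{rmk.formalityVShomotopyabelianity} to graded commutativity of the product on $\Ext^{\ast}(E,E)$, and then use the identification with $\oH^{\ast}(Z,\CC)$ together with an Eckmann--Hilton argument. Your closing caveat is well taken and in fact applies equally to the paper's own proof: the appeal to Eckmann--Hilton only has force if the hypothesis $\Ext^{\ast}(E,E)\cong\oH^{\ast}(Z,\CC)$ is read as providing two compatible unital products (equivalently, an isomorphism of graded algebras rather than merely of graded vector spaces), which is indeed how the statement is meant to be interpreted in the applications discussed in the paper.
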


\begin{cor}[\protect{\cite[Proposition~1.4]{BZ}}]\label{cor:BZ}
Let $X$ and $Y$ be two projective irreducible holomorphic symplectic manifolds and $\Phi\colon\operatorname{D}^b(X)\to\operatorname{D}^b(Y)$ a Fourier--Mukai equivalence between their derived categories. Suppose that $(E,\debar_E)$ is a hermitian holomorphic vector bundle admitting a projectively hyper-holomorphic connection. Then $\Def(\Phi(E),\debar_{\Phi(E)})$ is given by an intersection of quadrics.
\end{cor}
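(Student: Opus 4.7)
The strategy is to reduce the statement to Theorem~\ref{thm:End are DGMS} by transporting associative formality across the Fourier--Mukai equivalence. First, since $(E,\debar_E)$ admits a projectively hyper-holomorphic connection $\nabla$, Theorem~\ref{thm:End are DGMS} gives that the associative Dolbeault DG algebra
\[ \left(A^{0,\ast}(End(E))\,,\,\cdot\,,\,[\debar_E,-]\right) \]
is formal. By Proposition~\ref{prop:deformations with dgL}, this DG algebra represents the homotopy class of $\RHom(E,E)$ in the associative sense; in particular the associative DG algebra $\RHom(E,E)$ is formal.

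The next step is to transfer this formality to $\Phi(E)$. Since $\Phi\colon \operatorname{D}^b(X)\to\operatorname{D}^b(Y)$ is a Fourier--Mukai equivalence of triangulated categories, it induces a quasi-isomorphism of associative DG algebras
\[ \RHom_X(E,E)\;\simeq\;\RHom_Y(\Phi(E),\Phi(E)). \]
This uses the standard fact that a Fourier--Mukai equivalence, being fully faithful, preserves the derived endomorphism algebra up to quasi-isomorphism (compatibly with the Yoneda/composition product). Hence $\RHom_Y(\Phi(E),\Phi(E))$ is again an associatively formal DG algebra, and in particular Lie formal (cf. Remark~\ref{rmk.Lieformality}).

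To conclude, one notes that $\Phi(E)\in\operatorname{D}^b(Y)$ is quasi-isomorphic to a genuine hermitian holomorphic vector bundle, so that the Dolbeault DG algebra $(A^{0,\ast}(End(\Phi(E))),\cdot,[\debar_{\Phi(E)},-])$ represents $\RHom_Y(\Phi(E),\Phi(E))$ by Proposition~\ref{prop:deformations with dgL}. Combined with the previous step, this gives Lie formality of the DG Lie algebra controlling $\Def(\Phi(E),\debar_{\Phi(E)})$, and then Corollary~\ref{cor.formalityimpliesquadraticity}(1) yields that $\Def(\Phi(E),\debar_{\Phi(E)})$ is an intersection of quadrics.

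The main technical point -- and the only non-routine ingredient -- is the compatibility of $\Phi$ with the associative (or at least Lie) DG algebra structure on derived endomorphisms: one needs a DG-enhancement of the Fourier--Mukai kernel so that $\Phi$ lifts to a quasi-equivalence of DG categories, which then gives the required quasi-isomorphism of derived endomorphism algebras. This is standard for projective Fourier--Mukai kernels (e.g.\ via Toën's theorem on the uniqueness of DG enhancements), but it is the step that genuinely uses the projectivity hypothesis.
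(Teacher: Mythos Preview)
Your approach is essentially the same as the paper's: establish associative formality of $\RHom(E,E)$ via Theorem~\ref{thm:End are DGMS}, transport it across the Fourier--Mukai equivalence, and deduce quadraticity. The paper simply cites \cite[Proposition~1.4]{BZ} for the transfer step, whereas you sketch the mechanism via DG enhancements; both are fine.

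There is, however, one unjustified step. You assert that ``$\Phi(E)\in\operatorname{D}^b(Y)$ is quasi-isomorphic to a genuine hermitian holomorphic vector bundle''. This is not true in general: the image of a vector bundle under a Fourier--Mukai equivalence may well be a genuine complex, or at best a coherent sheaf that is not locally free (the paper itself acknowledges this difficulty in the examples following the corollary). Fortunately the claim is also unnecessary. The deformation functor of \emph{any} coherent sheaf or object $F\in\operatorname{D}^b(Y)$ is controlled by the DG Lie algebra $\RHom(F,F)$ (cf.\ $\S$~\ref{section:background} and the references there), and Lie formality of the latter implies quadraticity of $\Def_F$ by the general principle underlying Corollary~\ref{cor.formalityimpliesquadraticity} --- one does not need to pass through a Dolbeault model. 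So you should drop the second invocation of Proposition~\ref{prop:deformations with dgL} and conclude directly from the formality of $\RHom_Y(\Phi(E),\Phi(E))$.
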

\begin{proof}
By Theorem~\ref{thm.Dolbeaultformal} the Dolbeault associative DG algebra $\left(A^{0,\ast}(End(E))\,,\,\cdot\,,\,[\debar_E,-]\right)$ is formal. On the other hand, by \cite[Proposition~1.4]{BZ} the associative formality of derived endomorphisms is preserved under Fourier--Mukai equivalences. The statement follows.
\end{proof}

\begin{example}[Tangent bundle]
In Example~\ref{example:T} we have seen that the Chern connection of the holomorphic tangent bundle $\Theta_X$ of an irreducible holomorhic symplectic manifold $X$ is hyper-holomorphic with respect to the hermitian metric induced by $X$. Then the Dolbeault algebra $A^{0,\ast}(End(\Theta_X))$ is formal and the Kuranishi space $\Def(\Theta_X)$ is quadratic. 
\end{example}

\begin{example}[Vector bundles on Fano varieties of lines]
This example is due to O'Grady  (see \cite[Section~2.1]{O'Grady:Modular}). Let $Y\subset\lP(V_6)$ be a smooth cubic fourfold and $X=F(Y)\subset G=G(2,V_6)$ its Fano variety of lines. It is known that $X$ is an irreducible holomorphic symplectic manifold of type $K3^{[2]}$ (\cite{BD}). The Pl\"ucker embedding of $G$ in $\lP(\bigwedge^2V_6)$ provides $X$ with a canonical polarisation $H$ and, if $X$ is very general, it follows that $\Pic(X)=\ZZ H$. Let $Q$ be the tautological bundle on $G$ parametrising the quotient spaces of $V_6$; then $Q$ has rank $4$ and first Chern class equal to the Pl\"ucker class. One can check (e.g.\ Example~(2) in \cite[Section~2.1]{O'Grady:Modular}) that $\Delta(E)=c_2(X)$ is $\SU(2)$-invariant. Moreover, $E$ is stable. 
In particular, $E$ admits a projectively hyper-holomorphic connection and $\Def(E)$ is quadratic. 

Since $Q$ is generated by global sections, one can check that also $E$ is generated by global sections (in particular one can compute that $\oH^0(X,E)\cong \oH^0(G,Q)=V_6$ -- we thank E.\ Fatighenti for this remark). The elementary transform of $E$ is a vector bundle $F$ on $X$ of rank $2$ that is isomorphic to the restriction to $X$ of the tautological bundle on $G$ of the subspaces of $V_6$. It is easy to check by hand that $F$ is stable and that $\Delta(F)=\frac{3}{2}h^2-\frac{1}{2}c_2(X)$ is not $\SU(2)$-invariant. 
Nevertheless, by Corollary~\ref{cor:BZ}, also $A^{0,\ast}(End(F))$ if formal, and hence $\Def(F)$ is quadratic. 
From a geometric point of view these examples are easily understood: both $E$ and $F$ are rigid vector bundles, so that their deformation spaces consist of just a point.
\end{example}

\begin{example}
In \cite{O'Grady22}, O'Grady constructs a class of slope stable locally free sheaves on a generalised Kummer fourfold whose discriminant is $\SU(2)$-invariant. Therefore, by Theorem~\ref{thm.Dolbeaultformal}, the associated Dolbeault algebra is formal. Notice that also in this case the locally free sheaf is rigid.
\end{example}

\subsubsection{Other examples}
Constructing examples of projectively hyper-holomorphic connections on holomorphic vector bundles is in general a very difficult task. In fact, the only tool we have now is Proposition~\ref{prop:verbitsky Delta} applied to stable holomorphic vector bundles. The biggest set of theoretical examples is provided by Markman in \cite{Markman:Modular}. Markman's examples are divided in three classes: 
\begin{enumerate}
\item vector bundles of the form $\Phi(\cO_X)$, where $\Phi$ is a Fourier--Mukai transform;
\item vector bundles of the form $\Phi(\cO_x)$, where $\Phi$ is as before and $x\in X$ is a point;
\item vector bundles of the form $\Phi(\mathcal{L})$, where $\Phi\colon\operatorname{D}^b(Z)\to\operatorname{D}^b(X)$ a Fourier--Mukai equivalence, $Z\subset X$ is a lagrangian submanifold that deforms with $X$ in codimension $1$ and $\mathcal{L}$ is a power of the anti-canonical bundle of $Z$.
\end{enumerate}

All these examples, when stable, have a smooth Kuranishi space (actually, their Dolbeault DG Lie algebra is homotopy abelian), as it follows from \cite[Proposition~1.4]{BZ} (see also Corollary~\ref{cor:BZ}).  

The first two classes of examples are rigid, so that from a geometric point of view they are not very interesting. Markman explicitly constructs stable vector bundles belonging to these two classes of examples.

The third class of examples is much more interesting, because would provide examples of non-rigid vector bundles. In \cite[Example~3.11]{Markman:Modular}, Markman lists three known geometric cases where a lagrangian submanifold $Z\subset X$ deforms with $X$ in codimension $1$. The most interesting geometric example is given by sheaves of the form $\Phi(i_{\ast}\cO_{F(Y)})$, where $V$ is a smooth cubic fourfold, $Y$ is a linear section of $V$, $F(Y)$ and $F(V)$ are the respective Fano varieties of lines, and $i\colon F(Y)\to F(V)$. Unfortunately, it is still unknown whether such a sheaf is locally free and whether it is stable.
Nevertheless, when stable and locally free such a sheaf has a homotopy abelian Dolbeault DG Lie algebra by Theorem~\ref{thm.Dolbeaultformal} and Corollary~\ref{cor:3}. We remark that the formality of torsion sheaves of the form $i_{\ast}\mathcal{L}$ as above has been proved by Mladenov in \cite{Mladenov:Formality}, see also \cite[Theorem~0.1.3]{Mladenov:Degeneration}.

\section{Deformations of autodual connections}\label{section:moduli of connections}


To begin with, our aim is to define the deformation functor associated to an autodual connection $\nabla$ on a complex vector bundle on an irreducible holomorphic symplectic manifold.

First of all, if $B\in\Art_\CC$ is any local artinian $\CC$-algebra with residue field $\CC$, we define the $\SU(2)$-action on $A^\ast(End(E))\otimes_{\CC}B$ as the natural $\SU(2)$-action on $A^\ast(End(E))$ and as the identity on $B$.

\begin{defn}[Infinitesimal deformation of an autodual connection]\label{def.deformationconnection}
Let $X=(M,I)$ be an irreducible holomorphic symplectic manifold and $(M,g,I,J,K)$ the hyper-K\"ahler structure associated to a K\"ahler class $\omega$. Let $E$ be a complex vector bundle on $X$ and $\nabla$ an autodual connection on $E$. A deformation of $\nabla$ over $B\in\Art_{\CC}$ is a $B$-linear graded map
\[ \nabla_B\colon A^{\ast}(E)\otimes_{\CC}B\to A^{\ast+1}(E)\otimes_{\CC}B \]
satisfying the following conditions:
\begin{enumerate}
 \item $\nabla_B$ satisfies the Leibniz rule
 \[ \nabla_B(\alpha\otimes e\otimes b)=(\operatorname{d}\alpha)\otimes e\otimes b+(-1)^{k}\alpha\wedge\nabla_B(e\otimes b), \]
 where $\alpha$ is a $k$-form, $e$ is a section of $E$ and $b\in B$.
 \item $\nabla_B\otimes_B\CC = \nabla$,
 \item $\nabla_B^2\in A^2(End(E))\otimes_{\CC}B$ is $\SU(2)$-invariant.
\end{enumerate}
\end{defn}

We refer to a $\nabla_B$ satisfying item $(1)$ of Definition~\ref{def.deformationconnection} as a $B$-connection.
Item $(2)$ can be understood as the commutativity of the diagram of $B$-modules
\[ \xymatrix{ A^{\ast}(E)\otimes_{\CC} B \ar@{->}[rr]^{\nabla_B} \ar@{->}[d] & & A^{\ast+1}(E)\otimes_{\CC} B  \ar@{->}[d] \\
A^{\ast}(E) \ar@{->}[rr]^{\nabla} & & A^{\ast+1}(E)
} \]
where the $B$-module structure of $A^{0,\ast}(E)$ is induced by the projection $B\to \nicefrac{\mbox{$B$}}{\mbox{$\mathfrak{m}_B$}}\cong\CC$ that annihilates the maximal ideal $\mathfrak{m}_B$.
Finally, notice that the composition $\nabla_B^2=\nabla_B\circ\nabla_B$ is $A^0\otimes_{\CC}B$-linear, so that item $(3)$ is well defined and corresponds to the autoduality property.

\begin{rmk}
We may prove a completely analogous statement to Proposition~\ref{prop:hyper-hol as MC} simply by working over $B$ instead of $\CC$ in order to characterise autodual deformations in terms of strong Maurer--Cartan solutions.
\end{rmk}

\begin{rmk}\label{rmk.connectiondeformations}
Notice that a $B$-deformation $\nabla_B$ of $\nabla$ is equivalent to the data of an $A^0$-linear
\[ \hat{\nabla}_B\colon A^{0}(E)\to A^{1}(E)\otimes_{\CC}\mathfrak{m}_B \; . \]
In fact, if such $\hat{\nabla}_B$ is given, then there is a unique way to recover the $B$-deformation $\nabla_B$ by extending
\[ \nabla\otimes1\,+\,\hat{\nabla}_B\colon A^{0}(E)\to A^{1}(E)\otimes_{\CC}B \; . \]
via $B$-linearity and the Leibniz rule.
\end{rmk}

\begin{defn}[Equivalence of deformations]\label{defn:new gauge}
If $E$ is a complex vector bundle over $X$ and $B\in\Art_\CC$, then we denote by $E\otimes_\CC B$ the vector bundle over $X_B=X\times\operatorname{Spec}(B)$. If $\mathcal{G}(E)$ is the gauge group of $E$ and $\mathcal{G}(E)_B$ is the gauge group of $E\otimes B$, then we define $\mathcal{G}(E)^0$ the kernel
\[ \mathcal{G}(E)_B^0=\ker\left(\mathcal{G}(E)_B\longrightarrow\mathcal{G}(E)\right) \]
induced by the quotient map $B\to \nicefrac{\mbox{$B$}}{\mbox{$\mathfrak{m}_B$}}\cong\CC$.

If $\nabla_{B,1}$ and $\nabla_{B,2}$ are two $B$-connections, then we say that they are \emph{equivalent} if there exists an element $g_B\in\mathcal{G}(E)_B^0$ such that the following diagram
\[ \xymatrix{ A^{\ast}(E)\otimes_{\CC}B \ar@{->}[rr]^{g_B}\ar@{->}[d]_{\nabla_{B,1}} & & A^{\ast}(E)\otimes_{\CC}B \ar@{->}[d]^{\nabla_{B,2}} \\
A^{\ast+1}(E)\otimes_{\CC}B \ar@{->}[rr]_{g_B} & & A^{\ast+1}(E)\otimes_{\CC}B }\;  \]
commutes.
\end{defn}

In the diagram above, the automorphism $g_B$ is extended to the complex $A^\ast(E)\otimes B$ by $A^0$-linearity.

\begin{rmk}
The definition above is classical, but let us explain it. First of all, recall that the gauge group $\mathcal{G}(E)$ is the group of transformations of $E$ as a vector bundle over $X$ (same for $\mathcal{G}(E)_B$). Classically two connections (resp.\ $B$-connections) over $E$ (resp.\ $E_B$) are said to be equivalent if they are conjugated by a gauge transformation. The choice to act by the subgroup $\mathcal{G}(E)_B^0$ is motivated by the fact that we want an equivalence between two deformations of $\nabla$ to be the identity when quotiented by the maximal ideal (i.e.\ when restricted to the central fibre).
\end{rmk}
\begin{rmk}\label{rmk:gauge and exp}
The group $\mathcal{G}(E)_B^0$ is classically known in deformation theory, and it coincides with the nilpotent Lie group $\operatorname{exp}(A^0(End(E))\otimes_\CC\mathfrak{m}_B)$ (see e.g.\ \cite[Section~6.1]{GM88}).

In particular, from our point of view it is important to remark that $\operatorname{exp}(A^0(End(E))\otimes_\CC\mathfrak{m}_B)$ is also the group of gauge transformations of the quaternionic Dolbeault DG Lie algebra $\qA^*(End(E))\otimes_\CC B$ (cf.\ Section~$\S$~\ref{section.DefLie}).
\end{rmk}

\begin{defn}[Deformation functor associated to an autodual connection]
Let $X$ be an irreducible holomorphic symplectic manifold with a fixed K\"ahler class $\omega$. Let $E$ be a complex vector bundle on $X$ endowed with an autodual connection $\nabla$.
The deformation functor associated to $\nabla$ is defined by
\[ \Def_{\nabla}\colon \Art_{\CC}\to\Sets \qquad \qquad  \Def_{\nabla}(B) = \nicefrac{\left\{\mbox{autodual deformations of $\nabla$ over $B$}\right\}}{\sim} \; , \]
where $\sim$ denotes the equivalence relation given in Definition~\ref{defn:new gauge}.
\end{defn}


Given an autodual connection $\nabla$ on $E$, we denote by $\Def_{\qA^{\ast}(End(E))}$ the deformation functor associated to the quaternionic Dolbeault DG Lie algebra \[ \left( \qA^{\ast}(End(E))\,,\, x\,[\nabla^{0,1}_J,-]+y\, [\nabla^{0,1},-] \,,\,[-,-] \right) \]
as in Section $\S$~\ref{section:def of hyper-hol connections}.

The main result of this section is the following.

\begin{thm}\label{thm:iso of def}
Let $X$ be an irreducible holomorphic symplectic manifold with a fixed K\"ahler class $\omega$. Let $E$ be a complex vector bundle on $X$.
If $\;\nabla$ is an autodual connection, then there is a natural isomorphism of deformation functors
\[ \mathsf{\Phi}\colon \Def_{\qA^{\ast}(End(E))}\to \Def_{\nabla}.\]
\begin{proof}
Fix $B\in\Art_{\CC}$. A Maurer--Cartan element
\[ x\,\xi_1\otimes b_1+y\,\xi_2\otimes b_2\in\MC\left( \qA^{\ast}(End(E))\otimes_{\CC}\mathfrak{m}_B\,,\, \left[x\,\nabla^{0,1}_J+y\,\nabla^{0,1},-\right]\otimes\id_{\mathfrak{m}_B} \right) \]
satisfies the equation
\[ \begin{aligned}
&x^2\,\left(\left[\nabla^{0,1}_J\,,\,\xi_1\right]\otimes b_1+\frac{1}{2}\left[\xi_1,\xi_1\right]\otimes b_1^2 \right)+y^2\,\left(\left[\nabla^{0,1}\,,\,\xi_2\right]\otimes b_2+\frac{1}{2}\left[\xi_2,\xi_2\right]\otimes b_2^2 \right)+\\ 
&+xy\,\left( \left[\nabla^{0,1}_J\,,\,\xi_2\right]\otimes b_2 + \left[\nabla^{0,1}\,,\,\xi_1\right]\otimes b_1 + \left[\xi_1,\xi_2\right]\otimes b_1b_2 \right) = 0 \; . \end{aligned} \]
This can be rephrased as the following conditions
\begin{itemize}
    \item $\xi_2\otimes b_2\in\MC\left(A^{0,\ast}(End(E))\otimes_{\CC}\mathfrak{m}_B\,,\,[\nabla^{0,1},-]\otimes\id_{\mathfrak{m}_B}\right)$,
    \item $\xi_1\otimes b_1\in\MC\left(A^{0,\ast}(End(E))\otimes_{\CC}\mathfrak{m}_B\,,\,[\nabla^{0,1}_J,-]\otimes\id_{\mathfrak{m}_B}\right)$,
    \item $\left[\nabla^{0,1}_J +\xi_1\otimes b_1  \,,\, \nabla^{0,1}+\xi_2\otimes b_2\right]=0$.
\end{itemize}
where in the last equality we used $\left[\nabla^{0,1}_J \,,\, \nabla^{0,1}\right]=0$, being $\nabla$ autodual by hypothesis. Once again, we can repackage the above information as follows
\begin{itemize}
    \item $\nabla^{0,1}+\xi_2\otimes b_2$ is a holomorphic deformation of $\nabla^{0,1}$,
    \item $\nabla^{0,1}_J+\xi_1\otimes b_1$ satisfies the \emph{strong} Maurer--Cartan equation in the DG Lie algebra 
    \[ \left(A^{0,\ast}(End(E))\otimes_{\CC}\mathfrak{m}_B\,,\,[\nabla^{0,1}+\xi_2\otimes b_2,-]\otimes\id_{\mathfrak{m}_B}\right) \; . \]
\end{itemize}
Hence it is uniquely defined the $A^0$-linear map
\[ \hat{\nabla}_B = \xi_2\otimes b_2 + (J\otimes\id)\circ(\xi_1\otimes b_1)\circ(J^{-1}\otimes\id) \colon A^0(E)\longrightarrow A^1(E)\otimes_{\CC}\mathfrak{m}_B \]
which in turn gives us the definition of the $B$-deformation $\nabla_B=\nabla+\hat{\nabla}_B$ as explained by Remark~\ref{rmk.connectiondeformations}. Notice that the items above are equivalent to require that $\nabla_B$ is autodual, see Proposition~\ref{prop:hyper-hol as MC}.

We defined a natural transformation from the Maurer--Cartan functor, i.e.\ 
\[ \hat{\mathsf{\Phi}}\colon\MC_{\qA^\ast(End(E))}\longrightarrow\Def_\nabla. \]
It is not difficult to check that such a natural transformation is surjective.

To conclude we need to understand how the natural transformation $f$ behaves with respect to the respective gauge transformations.
First of all, recall from the definition that $\qA^0(End(E))=A^0(End(E))$ and so the gauge group acting on $\MC(\qA^\ast(End(E))\otimes_{\CC}\mathfrak{m}_B)$ coincides with the group $\operatorname{exp}(\qA^0(End(E))\otimes\mathfrak{m}_B)$. On the other hand, the last group is the same as the group of gauge equivalences of a $B$-deformation of $\nabla$ as defined in Definition~\ref{defn:new gauge}. 
To conclude the proof, we claim that 
\[ \hat{\mathsf{\Phi}}\left(x\,\xi_1\otimes b_1+y\,\xi_2\otimes b_2\right)\sim\hat{\mathsf{\Phi}}\left( x\,\xi'_1\otimes b'_1+y\,\xi'_2\otimes b'_2 \right) \]
if and only if
\[ (x\,\xi_1\otimes b_1+y\,\xi_2\otimes b_2)\sim(x\,\xi'_1\otimes b'_1+y\,\xi'_2\otimes b'_2)\;. \]
From this it follows both that $\hat{\mathsf{\Phi}}$ descends to a natural transformation 
\[ \mathsf{\Phi}\colon\Def_{\qA^\ast(End(E))}\longrightarrow\Def_\nabla \]
and that $\mathsf{\Phi}$ is injective, hence an isomorphism.

On the other hand the claim holds by definition. In fact Definition~\ref{defn:new gauge} is given so that it coincides with the natural gauge action in the deformation theory of a DG Lie algebra (see Remark~\ref{rmk:gauge and exp}).
\end{proof}
\end{thm}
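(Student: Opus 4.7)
The plan is to construct $\mathsf{\Phi}$ first at the level of Maurer--Cartan elements, and then to check it descends to an isomorphism of deformation functors by matching the respective gauge actions. Fix $B\in\Art_{\CC}$ with maximal ideal $\mathfrak{m}_B$. Using the bi-grading $\qA^1(End(E))=xA^{0,1}(End(E))\oplus yA^{0,1}(End(E))$, write a degree one element of $\qA^{\ast}(End(E))\otimes\mathfrak{m}_B$ as $x\xi_1+y\xi_2$ with $\xi_i\in A^{0,1}(End(E))\otimes\mathfrak{m}_B$. Since $x$ and $y$ are central and $[\nabla^{0,1}_J,\nabla^{0,1}]=0$ by Proposition~\ref{prop:hyper-hol as MC}, the Maurer--Cartan equation for $x\xi_1+y\xi_2$ with respect to the differential $[x\nabla^{0,1}_J+y\nabla^{0,1},-]$ splits, by separating the coefficients of $x^2$, $y^2$, and $xy$, into the three conditions
\[ [\nabla^{0,1}+\xi_2,\nabla^{0,1}+\xi_2]=0,\quad [\nabla^{0,1}_J+\xi_1,\nabla^{0,1}_J+\xi_1]=0,\quad [\nabla^{0,1}+\xi_2,\nabla^{0,1}_J+\xi_1]=0. \]

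Appealing to Proposition~\ref{prop:hyper-hol as MC} over $B$, these three equations are precisely the conditions saying that the $B$-connection
\[ \nabla_B \;=\; \nabla \,+\, \xi_2 \,+\, (J\otimes\id)\circ\xi_1\circ(J^{-1}\otimes\id), \]
defined from its $A^0$-linear part via Remark~\ref{rmk.connectiondeformations}, is autodual: the first equation is the integrability of $\nabla_B^{0,1}$, the second is the strong Maurer--Cartan condition on its $J$-conjugated $(1,0)$-part, and the third is the vanishing of the mixed component of $\nabla_B^2$. This defines a natural transformation $\hat{\mathsf{\Phi}}$ from the Maurer--Cartan functor $\MC_{\qA^{\ast}(End(E))}$ to the functor of autodual $B$-deformations of $\nabla$. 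Surjectivity of $\hat{\mathsf{\Phi}}$ comes from the same argument read backwards: for any autodual $B$-deformation $\nabla_B=\nabla+\hat{\nabla}_B$, setting $\xi_2=\hat{\nabla}_B^{0,1}$ and $\xi_1=(J^{-1}\otimes\id)\circ\hat{\nabla}_B^{1,0}\circ(J\otimes\id)$ produces a Maurer--Cartan element whose image is $\nabla_B$.

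To descend from $\hat{\mathsf{\Phi}}$ to the desired natural transformation $\mathsf{\Phi}\colon\Def_{\qA^{\ast}(End(E))}\to\Def_\nabla$, I match the two notions of gauge equivalence. By Remark~\ref{rmk:gauge and exp}, the gauge group controlling $\Def_{\qA^{\ast}(End(E))}(B)$ is the nilpotent group $\exp(A^0(End(E))\otimes\mathfrak{m}_B)$, and this is the same group as $\mathcal{G}(E)_B^0$, which by Definition~\ref{defn:new gauge} acts on autodual $B$-deformations through conjugation $\nabla_B\mapsto g_B\nabla_B g_B^{-1}$. A direct computation with the exponential power series, using centrality of $x$ and $y$ together with the fact that the DG Lie algebra gauge action is the perturbation of the exponential adjoint action by the differential $[x\nabla^{0,1}_J+y\nabla^{0,1},-]$, shows that $e^a$ intertwines $\hat{\mathsf{\Phi}}(x\xi_1+y\xi_2)$ and $\hat{\mathsf{\Phi}}(x\xi_1'+y\xi_2')$ on the connection side exactly when $e^a$ realises the DG Lie algebra gauge equivalence between $x\xi_1+y\xi_2$ and $x\xi_1'+y\xi_2'$. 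This simultaneously proves well-definedness and injectivity of $\mathsf{\Phi}$, finishing the proof.

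The main technical obstacle will be this final gauge--compatibility check: one has to verify carefully that the $J$-conjugation used to recover the $(1,0)$-part of $\nabla_B$ from $\xi_1$ together with the perturbation of the exponential action by the differential $[x\nabla^{0,1}_J+y\nabla^{0,1},-]$ conspire so that conjugation of connections in $\mathcal{G}(E)_B^0$ corresponds term by term to the DG Lie gauge action on quaternionic Maurer--Cartan elements. Everything else amounts to a book-keeping reformulation of Proposition~\ref{prop:hyper-hol as MC} in $B$-families.
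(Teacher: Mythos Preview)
Your proposal is correct and follows essentially the same approach as the paper: both decompose the Maurer--Cartan equation in $\qA^{\ast}(End(E))\otimes\mathfrak{m}_B$ according to the $x^2$, $y^2$, $xy$ coefficients, identify the resulting three conditions with autoduality of $\nabla_B=\nabla+\xi_2+(J\otimes\id)\circ\xi_1\circ(J^{-1}\otimes\id)$ via Proposition~\ref{prop:hyper-hol as MC}, and then match the gauge actions using $\qA^0(End(E))=A^0(End(E))$ together with Remark~\ref{rmk:gauge and exp}. The only notable difference is tonal: you flag the gauge-compatibility check as the main technical obstacle requiring a direct exponential computation, whereas the paper dispatches it in one line by observing that Definition~\ref{defn:new gauge} was set up precisely so that the two gauge actions agree by construction.
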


If $E$ is a complex vector bundle and $\nabla$ is a hyper-holomorphic connection, then $(E,\nabla^{0,1})$ is a holomorphic structure (that is polystable by the Donaldson--Uhlenbeck--Yau Theorem). From this point of view it is natural to ask what is the interplay between holomorphic deformations of $(E,\nabla^{0,1})$ and autodual deformations of $(E,\nabla)$.

\begin{cor}\label{cor.autodualwithfixedhol}
Let $E$ be a complex vector bundle on an irreducible holomorphic symplectic manifold with a fixed k\"ahler class. Suppose $E$ is endowed with a holomorphic structure $\debar$ and a hyper-holomorphic connection $\nabla=\nabla^{1,0}+\debar$. Then
\begin{enumerate}
    \item for every holomorphic deformation $\,(E,\debar')$ of $\,(E,\debar)$ there exists an autodual connection $\nabla'$ whose $(0,1)$-part is $\debar'$,
    \item there exists a $1\colon 1$ correspondence
    \[ \left\{ \mbox{First order deformations of $\debar$} \right\}\leftrightarrow \left\{\begin{aligned}
    \mbox{First order} &\mbox{ autodual deformations of $\,\nabla\,$}\\
    &\mbox{whose $(0,1)$-part is $\debar$}
    \end{aligned}  \right\} \; . \]
\end{enumerate}
\begin{proof}
Let us start from item $(1)$. Infinitesimal deformations of the holomorphic vector bundle $(E,\debar)$ are controlled by the DG Lie algebra
\[ \left(A^{0,\ast}(End(E)),[\debar,-],[-,-]\right) \]
by Proposition~\ref{prop:deformations with dgL}. On the other hand, deformations of an autodual connection are controlled by the DG Lie algebra
\[ \left( \qA^{\ast}(End(E))\,,\, x\,[\nabla^{0,1}_J,-]+y\, [\debar,-] \,,\,[-,-] \right) \]
by Theorem~\ref{thm:iso of def}.
The projection
\[ \hat{\pi}_y\colon\qA^{\ast}(End(E))\longrightarrow A^{0,\ast}(End(E)) \]
defined as the evaluation at $x=0,y=1$ is a morphism of DG Lie algebras, so that we have the induced natural transformation
\[ \Def_{\nabla}\cong\Def_{\qA^{\ast}(End(E))}\xrightarrow{\pi_y}\Def_{(E,\debar)} \; .\]
The claim is equivalent to prove that $\pi_y$ is surjective, which is exactly the statement of Proposition~\ref{prop:surjectivity}.

Let us now turn to item $(2)$. First order deformations are parametrised by elements in the tangent space of a deformation functor. Recall that by Proposition~\ref{prop.autodualVSdefor} there is a natural transformation 
\[ \Def_\nabla\cong\Def_{\qA^{\ast}(End(E))} \longrightarrow \Def_{\left(A^{0,\ast}(End(E))\,,\,[\nabla^{0,1},-]\right)} \times \Def_{\left(A^{0,\ast}(End(E))\,,\,[\nabla^{0,1}_J,-]\right)} \]
that is an isomorphism on tangent spaces, and moreover 
\[ T^1\Def_{\left(A^{0,\ast}(End(E))\,,\,[\nabla^{0,1},-]\right)} \cong T^1\Def_{\left(A^{0,\ast}(End(E))\,,\,[\nabla^{0,1}_J,-]\right)} \; . \]
The claim follows.
\end{proof}
\end{cor}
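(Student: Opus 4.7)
The plan is to translate both parts of the corollary into statements about the deformation functors introduced earlier, and then invoke the results already established in Section~\ref{section:def of hyper-hol connections} and Section~\ref{section:moduli of connections}. The central identifications are Theorem~\ref{thm:iso of def}, which provides the natural isomorphism $\Def_{\nabla}\cong\Def_{\qA^{\ast}(End(E))}$ on the autodual side, together with Proposition~\ref{prop:deformations with dgL}, which controls holomorphic deformations of $(E,\debar)$ by the Dolbeault DG Lie algebra $\left(A^{0,\ast}(End(E)),[\debar,-]\right)$. Under the hyper-holomorphicity of the induced connection $[\nabla,-]$ on $End(E)$ (which is tautological from the hypothesis), these two controlling algebras sit inside a single framework in which they can be compared.

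For item $(1)$, I would exploit the evaluation morphism of DG Lie algebras $\hat{\pi}_y\colon\qA^{\ast}(End(E))\to A^{0,\ast}(End(E))$ obtained by setting $x=0,\,y=1$. Under the explicit dictionary constructed in the proof of Theorem~\ref{thm:iso of def}, a Maurer--Cartan element $x\,\xi_1\otimes b_1+y\,\xi_2\otimes b_2$ produces the autodual $B$-connection whose $(0,1)$-part is exactly $\debar+\xi_2\otimes b_2$. Hence the natural transformation $\pi_y$ induced by $\hat{\pi}_y$ agrees with the geometric assignment $\nabla'\mapsto(\nabla')^{0,1}$, and the claim of item $(1)$ is equivalent to the surjectivity of $\pi_y$ on every $B\in\Art_{\CC}$. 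This surjectivity is precisely the content of Proposition~\ref{prop:surjectivity}.

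For item $(2)$, I would apply Proposition~\ref{prop.autodualVSdefor}. Its first assertion yields an isomorphism of tangent spaces
\[ T^1\Def_{\nabla}\cong T^1\Def_{(A^{0,\ast}(End(E)),[\debar,-])}\times T^1\Def_{(A^{0,\ast}(End(E)),[\nabla^{0,1}_J,-])}, \]
while its second assertion provides a natural isomorphism $\theta$ between the two factors on the right. First-order autodual deformations of $\nabla$ whose $(0,1)$-part is $\debar$ are precisely those killed by the first projection $\pi_y$, hence correspond to the second summand, which $\theta^{-1}$ identifies with $T^1\Def_{(E,\debar)}$. Composing the two identifications produces the desired $1{:}1$ correspondence. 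The only delicate point I anticipate in either part is the verification that $\hat{\pi}_y$ really computes the $(0,1)$-part of a deformed connection, but this is immediate from the explicit Maurer--Cartan formula in the proof of Theorem~\ref{thm:iso of def}; once this dictionary is in place, both statements follow formally from the DG Lie-theoretic machinery already developed.
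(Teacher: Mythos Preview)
Your proposal is correct and follows essentially the same route as the paper: both items are reduced to statements about the evaluation morphism $\hat{\pi}_y$ and the product decomposition of tangent spaces, invoking Proposition~\ref{prop:surjectivity} for item~(1) and Proposition~\ref{prop.autodualVSdefor} for item~(2). Your treatment of item~(2) is in fact slightly more explicit than the paper's (you spell out that the deformations with fixed $(0,1)$-part are the kernel of $\pi_y$ on tangent spaces and then apply $\theta^{-1}$), but the underlying argument is identical.
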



\end{document}